\providecommand{\U}[1]{\protect\rule{.1in}{.1in}}
\newtheorem{theorem}{Theorem}[section]
\newtheorem{lemma}[theorem]{Lemma}
\newtheorem{remark}{Remark}[section]
\theoremstyle{definition}
\theoremstyle{remark}
\numberwithin{equation}{section}
\let\pdfoutput=\undefined\fi
\begin{document}
\pagestyle{myheadings}

\begin{center}
{\huge \textbf{A  simultaneous decomposition of seven  matrices over real quaternion algebra and its applications}}\footnote{This research was supported by
the grants from the National Natural
Science Foundation of China (11171205), the Natural Science Foundation of
Shanghai (11ZR1412500), the Key Project of Scientific Research Innovation Foundation
of Shanghai Municipal Education Commission (13ZZ080).
\par
{}* Corresponding author}

\bigskip

{\large \textbf{Zhuo-Heng He, Qing-Wen Wang$^{*}$}}

Department of Mathematics, Shanghai University, Shanghai 200444. P.R. China

E-mail: wqw@staff.shu.edu.cn (Q.W. Wang), hzh19871126@126.com (Z.H. He)

\end{center}

\begin{quotation}
\noindent\textbf{Abstract:} Let $\mathbb{H}$ be the real quaternion algebra
   and $\mathbb{H}^{n\times m}$ denote the set of all $n\times m$ matrices over $\mathbb{H}$. In this paper,  we  construct a  simultaneous decomposition of  seven general real quaternion matrices
with  compatible sizes: $A\in \mathbb{H}^{m\times n},
B\in \mathbb{H}^{m\times p_{1}},C\in \mathbb{H}^{m\times p_{2}},D\in \mathbb{H}^{m\times p_{3}},E\in \mathbb{H}^{q_{1}\times n},F\in \mathbb{H}^{q_{2}\times n},G\in \mathbb{H}^{q_{3}\times n}$. As applications of  the simultaneous
matrix decomposition,   we give solvability conditions, general solutions, as well as  the range of ranks    of the   general
solutions   to the following two real quaternion matrix equations $BXE+CYF+DZG=A$ and $BX+WE+CYF+DZG=A,$
where $A,B,C,D,E,F,$ and $G$ are given real quaternion matrices.
\newline\noindent\textbf{Keywords:} Quaternion; Division ring; Matrix decomposition; Matrix equation; Rank; General  solution\newline%
\noindent\textbf{2010 AMS Subject Classifications:\ }{\small 15A21, 15A22, 15A24, 15A33, 15A03}\newline
\end{quotation}

\section{\textbf{Introduction}}
Throughout this paper, let $\mathbb{R},\mathbb{C},$ and $\mathbb{H}^{m\times
n}$ stand, respectively, for the real number field, the complex number field, and the set of all $m\times n$ matrices over the real quaternion algebra
\[
\mathbb{H}=\big\{a_{0}+a_{1}i+a_{2}j+a_{3}k\big|~i^{2}=j^{2}=k^{2}%
=ijk=-1,a_{0},a_{1},a_{2},a_{3}\in\mathbb{R}\big\}.
\]
 The rank of a quaternion matrix $A$ is defined to be the maximum number
of columns of $A$ which are right linearly independent \cite{zhangfuzheng}. It is easy to see that for any nonsingular matrices $P$ and $Q$ of appropriate sizes, $A$ and $PAQ$ have the same rank \cite{zhangfuzheng}. Moreover, for $A\in \mathbb{H}^{m\times n},$ by \cite{TWH}, there exist invertible matrices $P$ and $Q$ such that
\begin{align*}
PAQ=\begin{pmatrix}I_{r}&0\\0&0\end{pmatrix}
\end{align*}where $r=r(A),$ $I_{r}$ is the $r\times r$ identity matrix. The rank of a block real quaternion matrix
\begin{align*}
\begin{pmatrix}A_{11}&A_{12}&\cdots&A_{1,n}\\A_{21}&A_{22}&\cdots&A_{2,n}\\ \vdots & \vdots &\ddots &\vdots \\
A_{m,1}&A_{m,2}&\cdots&A_{m,n}\end{pmatrix}
\end{align*}is denoted by $r_{a_{11}a_{12}\cdots a_{1,n}|a_{21}a_{22}\cdots a_{2,n}|\cdots |a_{m,1}a_{m,2}\cdots a_{m,n}}.$
If the block real quaternion matrix has zero, we use ``$0$" instead of zero in the subscript. For instance, the ranks of the following block real quaternion matrices
\begin{align*}
(B,~C,~D),~\begin{pmatrix}E\\F\\G\end{pmatrix},~\begin{pmatrix}D&0&A&B&0\\D&A&0&0&C\\0&E&0&0&0\\0&0&F&0&0\\0&F&0&0&0\end{pmatrix}
\end{align*}
are represented by $r_{bcd},~r_{e|f|g},~ r_{d0ab0|da00c|0e000|00f00|0f000}$, respectively. The set of all $n\times n$ invertible real quaternion matrices  is denoted by $GL_{n}(\mathbb{H})$.

Quaternions were introduced by Irish mathematician Sir William Rowan
Hamilton in 1843. It is well known that $\mathbb{H}$  is an associative and noncommutative division algebra.
 General
properties of quaternion and real quaternion matrices can be found in
\cite{zhangfuzheng}.  Nowadays
real quaternion matrices have always been at the heart of   computer science,
quantum physics, signal and color image processing, and so on (e.g.
\cite{N. LE Bihan}, \cite{J.C.K.Chu}, \cite{S. De Leo}, \cite{Took1}-\cite{Took4}).

In mathematics, engineering, signal, circuit and others, many problems can be
transformed into the decomposition of multiple matrices (e.g.
\cite{tree3},  \cite{siamre}, \cite{tree1}).
In 1981, Paige and Saunders \cite{ccp} introduced the generalized singular value decomposition of two matrices with the same row number. In 1991, Zha \cite{tree1} gave a restricted singular values of a general matrix triplet $\begin{pmatrix}\begin{smallmatrix}A&B\\C&~\end{smallmatrix}\end{pmatrix}$ over
$\mathbb{C}$. In 1991, Bart De Moor and G.H. Golub \cite{moor1} derived a generalization of the OSVD. Several applications were discussed in \cite{moor1}. Moreover, Bart De Moor and Paul Van Dooren \cite{moor3} presented the generalized singular value decompositions for $k$ general matrices $(A_{1},A_{2},\ldots,A_{k})$, where
$A_{1}\in \mathbb{C}^{n_{0}\times n_{1}},A_{2}\in \mathbb{C}^{n_{1}\times n_{2}},\ldots,A_{k}\in \mathbb{C}^{n_{k-1}\times n_{k}}.$ In 1994, C.C. Paige
and M.S. Wei \cite{ccpaige} introduced the history and generality of the CS Decomposition. In 2000, Delin Chu, Lieven De Lathauwer and Bart De Moor \cite{CHU5} proved that the restricted singular value decomposition of a general matrix triplet $\begin{pmatrix}\begin{smallmatrix}A&B\\C&~\end{smallmatrix}\end{pmatrix}$ can be computed using a CSD-based QR-type method. In 2011, Wang, van der Woude and Yu \cite{QWWangandyushaowen} derived the decomposition of three general matrices with the same row number over an arbitrary division ring. In 2012, Wang, Zhang and van der Woude \cite{zhangxia} gave a simultaneous decomposition concerning the general matrix quaternity $\begin{pmatrix}\begin{smallmatrix}A&B&C\\D&~&~\end{smallmatrix}\end{pmatrix}$ over an arbitrary division ring. Quite recently, He and Wang \cite{hezhuoheng} constructed a simultaneous decomposition of five general real quaternion matrices $\begin{pmatrix}\begin{smallmatrix}A&B&C\\D&~&~\\E&~&~\end{smallmatrix}\end{pmatrix}$. He and Wang \cite{hezhuoheng}
gave the rang of ranks of the real quaternion matrix expression $A-BXD-CYE$ by using the
simultaneous decomposition of five general real quaternion matrices $\begin{pmatrix}\begin{smallmatrix}A&B&C\\D&~&~\\E&~&~\end{smallmatrix}\end{pmatrix}$.

The remainder of the paper is organized as follows. In Section 2, we propose
the  simultaneous decomposition of the  matrix array
\begin{align}\label{array1}
\begin{pmatrix}
A&B&C&D\\
E&~&~&~\\
F&~&~&~\\
G&~&~&~\end{pmatrix},
\end{align}
where $A\in \mathbb{H}^{m\times n},
B\in \mathbb{H}^{m\times p_{1}},C\in \mathbb{H}^{m\times p_{2}},D\in \mathbb{H}^{m\times p_{3}},E\in \mathbb{H}^{q_{1}\times n},F\in \mathbb{H}^{q_{2}\times n}$  and $G\in \mathbb{H}^{q_{3}\times n}$ are general real quaternion matrices. In Section 3, we discuss several applications of the simultaneous decomposition. In Section 3.2, we derive   solvability conditions and the general solution to the real quaternion matrix equation
\begin{align}\label{system002}
BXE+CYF+DZG=A.
\end{align}
In Section 3.3, we give the range of ranks  of the general solution to the real quaternion matrix equation (\ref{system002}). In Section 3.4, we present
 solvability conditions and   the general solution to the real quaternion matrix equation
\begin{align}\label{system001}
BX+WE+CYF+DZG=A.
\end{align}
In Section 3.5, we derive the range of ranks  of the general solution to the real quaternion matrix equation (\ref{system001}).

\section{\textbf{A simultaneous decomposition of the general matrix array (\ref{array1}) over $\mathbb{H}$}}

We begin with the following lemma that is a  basic
tool for obtaining the main result.
\begin{lemma}\label{lemma00}\cite{QWWangandyushaowen}
Let $B\in \mathbb{H}^{m\times p_{1}},
C\in \mathbb{H}^{m\times p_{2}}$ and $D\in \mathbb{H}^{m\times p_{3}}$ be given.  Then there exist
 $\widetilde{P}\in GL_{m}(\mathbb{H}),\widetilde{T_{1}}\in GL_{p_{1}}(\mathbb{H}),
\widetilde{T_{2}}\in GL_{p_{2}}(\mathbb{H})$ and $\widetilde{T_{3}}\in GL_{p_{3}}(\mathbb{H})$ such that
\begin{align}
B=\widetilde{P} \widetilde{S_{B}}\widetilde{T_{1}}, \qquad C=\widetilde{P} \widetilde{S_{C}}\widetilde{T_{2}},\qquad D=\widetilde{P} \widetilde{S_{D}}\widetilde{T_{3}},
\end{align}
where
\begin{align*}
(\widetilde{S_{B}},\widetilde{S_{C}},\widetilde{S_{D}})=\begin{pmatrix}\begin{matrix} I &0&0&0&0&0\\
0 & I&0&0&0&0\\
0 & 0&I&0&0&0\\
0 & 0&0&I&0&0\\
0 & 0&0&0&I&0\\
0 & 0&0 &0&0&0\\
0 & 0&0 &0&0&0\\
0 & 0&0 &0&0&0\\
0 & 0&0 &0&0&0\\
0 & 0&0 &0&0&0
\end{matrix}~&
\begin{matrix} 0 &0&0&I&0&0\\
0 & 0&0&0&I&0\\
0 & 0&0 &0&0&0\\
0 & 0&0 &0&0&0\\
0 & 0&0 &0&0&0\\
I & 0&0&0&0&0\\
0 & I&0&0&0&0\\
0 & 0&I&0&0&0\\
0 & 0&0 &0&0&0\\
0 & 0&0 &0&0&0
\end{matrix}~&
\begin{matrix} 0 &0&0&0 &I&0\\
0 & 0&0 &0&0&0\\
0&0&0&I&0&0\\
0&I & 0&0&0&0\\
0 & 0&0 &0&0&0\\
0&I & 0&0&0&0\\
0 & 0&I&0&0&0\\
0 & 0&0 &0&0&0\\
I&0 & 0&0&0&0\\
0 & 0&0 &0&0&0
\end{matrix}
\end{pmatrix}
\begin{matrix} m_{1}\\
m_{2}\\
m_{3}\\
m_{4}\\
m_{5}\\
m_{4}\\
m_{6}\\
m_{7}\\
m_{8}\\
m-r_{bcd}
\end{matrix},
\end{align*}
where
\begin{align*}
m_{4}+m_{6}+m_{7}=r_{bc}-r_{b},~m_{1}+m_{2}=r_{b}+r_{c}-r_{bc},~m_{8}=r_{bcd}-r_{bc},
\end{align*}
\begin{align*}
m_{4}+m_{6}=r_{bc}+r_{bd}-r_{bcd}-r_{b},~m_{1}+m_{3}=r_{b}+r_{d}-r_{bd},~m_{3}+m_{4}=r_{bc}+r_{cd}-r_{bcd}-r_{c}.
\end{align*}

\end{lemma}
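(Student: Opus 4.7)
The plan is to build the simultaneous factorization in stages, starting with a rank-revealing factorization of $B$ alone, then propagating the left factor through $C$ and $D$, and finally using row operations that preserve the canonical form of $B$ to reduce $C$ and $D$ jointly.

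First, I would invoke the Gauss-type decomposition over $\mathbb{H}$ already recalled in the introduction to obtain invertible $P^{(1)}$ and $T_1^{(1)}$ with
\[
P^{(1)} B T_1^{(1)} = \begin{pmatrix} I_{r_b} & 0 \\ 0 & 0 \end{pmatrix}.
\]
Carrying $P^{(1)}$ through, write $P^{(1)}C = \begin{pmatrix} C_1 \\ C_2 \end{pmatrix}$ and $P^{(1)}D = \begin{pmatrix} D_1 \\ D_2 \end{pmatrix}$, so that $C_2$ and $D_2$ represent the components of $\mathrm{Col}(C)$ and $\mathrm{Col}(D)$ transverse to $\mathrm{Col}(B)$, while $C_1$ and $D_1$ lie inside the part of the ambient space identified with $\mathrm{Col}(B)$.

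Second, I would handle the pair $(C_2,D_2)$ first, since row operations on the bottom strip are free (they do not perturb the canonical shape of $B$). A simultaneous decomposition of a two-matrix array with the same row number—the quaternion analogue of the Paige--Saunders GSVD, available over any division ring—produces invertible row and column factors bringing $(C_2,D_2)$ to a joint canonical form whose row blocks refine $\mathrm{Col}(C) \cap \mathrm{Col}(D)$ and its complements inside a fixed complement of $\mathrm{Col}(B)$. For the pair $(C_1,D_1)$ I repeat the same two-matrix simultaneous decomposition, but now the admissible row transformations are block lower triangular of the form $\begin{pmatrix} \ast & 0 \\ \ast & \ast \end{pmatrix}$ with invertible diagonal blocks, so that after compensation by a column operation on $B$ (which is absorbed into the final $\widetilde T_1$), $B$ retains its canonical block shape. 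The column operations applied to $C$ and $D$ throughout are collected into $\widetilde T_2$ and $\widetilde T_3$, and the total row factor is $\widetilde P$.

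Unpacking the two pair-reductions yields ten horizontal bands of sizes $m_1,\dots,m_8,m-r_{bcd}$ (with $m_4$ appearing twice because the diagonal and antidiagonal $I$-blocks required to reconcile the intersections of the three column spaces occupy two bands of the same width). The stated size identities such as $m_1+m_2 = r_b+r_c-r_{bc}$, $m_4+m_6+m_7=r_{bc}-r_b$, and $m_8=r_{bcd}-r_{bc}$ would then follow by inclusion--exclusion applied to the dimensions of the subspaces $\mathrm{Col}(B),\mathrm{Col}(C),\mathrm{Col}(D)$ and their pairwise and triple sums, together with the invariance of rank under the invertible factors $\widetilde P,\widetilde T_1,\widetilde T_2,\widetilde T_3$.

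The main obstacle I expect is the bookkeeping inside the second stage: at the step where $(C_1,D_1)$ is reduced, the allowed row operations are constrained to the upper-left block of $\widetilde P$ so as not to disturb $B$'s canonical form, and the compensating column operations on $B$ must be tracked carefully so that the final $\widetilde T_1$ remains invertible. Verifying that after all of these coupled transformations exactly the ten listed band types survive—with sizes obeying the stated rank identities—is the delicate combinatorial part of the argument; everything else is routine linear algebra over the division ring $\mathbb{H}$.
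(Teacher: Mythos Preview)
The paper does not prove this lemma: it is quoted verbatim from the cited reference \cite{QWWangandyushaowen} and used as a black box to launch the seven-matrix decomposition in Theorem~\ref{theorem01}. So there is no in-paper argument to compare your proposal against.

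That said, your sketch is a plausible outline of how the cited result is actually established, and it is worth flagging two points where the outline is thinner than it looks. First, invoking ``the quaternion analogue of the Paige--Saunders GSVD'' for the pairs $(C_2,D_2)$ and $(C_1,D_1)$ is not quite right: over a noncommutative division ring one does not have singular values, only an equivalence canonical form for a pair sharing a row space, and that is the tool actually used in \cite{QWWangandyushaowen}. Second, the reason the block size $m_4$ appears twice is not a generic ``diagonal/antidiagonal'' phenomenon but comes from a specific coupling step in which a block of $S_C$ and a block of $S_D$ are forced to share the same row band after the two pair-reductions are stitched together; your inclusion--exclusion heuristic does not by itself explain why these two bands must have equal width, and in the original proof this equality emerges only after an explicit further row/column reduction that you have not described. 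The rest of your plan---reduce $B$, split $C$ and $D$ into top/bottom strips, reduce each strip as a pair, then clean up with block-triangular row moves that preserve the shape of $B$---matches the strategy of the cited source.
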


Now we give the main theorem of this section.

\begin{theorem}\label{theorem01}
Let $A\in \mathbb{H}^{m\times n},
B\in \mathbb{H}^{m\times p_{1}},C\in \mathbb{H}^{m\times p_{2}},D\in \mathbb{H}^{m\times p_{3}},E\in \mathbb{H}^{q_{1}\times n},F\in \mathbb{H}^{q_{2}\times n}$  and $G\in \mathbb{H}^{q_{3}\times n}$ be given. Then there exist
 $P\in GL_{m}(\mathbb{H}),~Q\in GL_{n}(\mathbb{H}),~T_{1}\in GL_{p_{1}}(\mathbb{H}),~
T_{2}\in GL_{p_{2}}(\mathbb{H}),~T_{3}\in GL_{p_{3}}(\mathbb{H}),~V_{1}\in GL_{q_{1}}(\mathbb{H}),~V_{2}\in GL_{q_{2}}(\mathbb{H}),$ and $V_{3}\in GL_{q_{3}}(\mathbb{H})$ such that%
\begin{align}\label{equ021}
A=P S_{A}Q,B=P S_{B}T_{1}, C=P S_{C}T_{2},D=P S_{D}T_{3}, E=V_{1} S_{E}Q, F=V_{2} S_{F}Q,G=V_{3} S_{G}Q,
\end{align}
where%
\begin{align}\label{equ0022}
S_{A}=\begin{pmatrix}
A_{11} & \cdots&   A_{19}&   A_{1,10}&0\\
\vdots& \ddots&   \vdots&\vdots&\vdots\\
A_{91}& \cdots&A_{99}&A_{9,10}& 0\\
A_{10,1}& \cdots&A_{10,9}&0&0\\0&\cdots&0 &0&I_{t}
\end{pmatrix},
\end{align}
\begin{align}\label{equ0023}
(S_{B},S_{C},S_{D})=\begin{pmatrix}\begin{matrix} I &0&0&0&0&0\\
0 & I&0&0&0&0\\
0 & 0&I&0&0&0\\
0 & 0&0&I&0&0\\
0 & 0&0&0&I&0\\
0 & 0&0 &0&0&0\\
0 & 0&0 &0&0&0\\
0 & 0&0 &0&0&0\\
0 & 0&0 &0&0&0\\
0 & 0&0 &0&0&0\\
0 & 0&0 &0&0&0
\end{matrix}~&
\begin{matrix} 0 &0&0&I&0&0\\
0 & 0&0&0&I&0\\
0 & 0&0 &0&0&0\\
0 & 0&0 &0&0&0\\
0 & 0&0 &0&0&0\\
I & 0&0&0&0&0\\
0 & I&0&0&0&0\\
0 & 0&I&0&0&0\\
0 & 0&0 &0&0&0\\
0 & 0&0 &0&0&0\\
0 & 0&0 &0&0&0
\end{matrix}~&
\begin{matrix} 0 &0&0&0 &I&0\\
0 & 0&0 &0&0&0\\
0&0&0&I&0&0\\
0&I & 0&0&0&0\\
0 & 0&0 &0&0&0\\
0&I & 0&0&0&0\\
0 & 0&I&0&0&0\\
0 & 0&0 &0&0&0\\
I&0 & 0&0&0&0\\
0 & 0&0 &0&0&0\\
0 & 0&0 &0&0&0
\end{matrix}
\end{pmatrix}
\begin{matrix} m_{1}\\
m_{2}\\
m_{3}\\
m_{4}\\
m_{5}\\
m_{4}\\
m_{6}\\
m_{7}\\
m_{8}\\
m-r_{bcd}-t\\
t
\end{matrix},
\end{align}
\begin{align}\label{equ0024}
(S_{E}^{*},S_{F}^{*},S_{G}^{*})=\begin{pmatrix}\begin{matrix} I &0&0&0&0&0\\
0 & I&0&0&0&0\\
0 & 0&I&0&0&0\\
0 & 0&0&I&0&0\\
0 & 0&0&0&I&0\\
0 & 0&0 &0&0&0\\
0 & 0&0 &0&0&0\\
0 & 0&0 &0&0&0\\
0 & 0&0 &0&0&0\\
0 & 0&0 &0&0&0\\
0 & 0&0 &0&0&0
\end{matrix}~&
\begin{matrix} 0 &0&0&I&0&0\\
0 & 0&0&0&I&0\\
0 & 0&0 &0&0&0\\
0 & 0&0 &0&0&0\\
0 & 0&0 &0&0&0\\
I & 0&0&0&0&0\\
0 & I&0&0&0&0\\
0 & 0&I&0&0&0\\
0 & 0&0 &0&0&0\\
0 & 0&0 &0&0&0\\
0 & 0&0 &0&0&0
\end{matrix}~&
\begin{matrix} 0 &0&0&0 &I&0\\
0 & 0&0 &0&0&0\\
0&0&0&I&0&0\\
0&I & 0&0&0&0\\
0 & 0&0 &0&0&0\\
0&I & 0&0&0&0\\
0 & 0&I&0&0&0\\
0 & 0&0 &0&0&0\\
I&0 & 0&0&0&0\\
0 & 0&0 &0&0&0\\
0 & 0&0 &0&0&0
\end{matrix}
\end{pmatrix}
\begin{matrix}
n_{1}\\
n_{2}\\
n_{3}\\
n_{4}\\
n_{5}\\
n_{4}\\
n_{6}\\
n_{7}\\
n_{8}\\
n-r_{e|f|g}-t\\t
\end{matrix},
\end{align}
where
\begin{align*}
t=r_{abcd|e000|f000|g000}-r_{bcd}-r_{e|f|g},
\end{align*}
\begin{align}\label{equh025}
m_{1}=r_{b}+r_{c}+r_{d}-r_{db0|d0c},
~
m_{2}=r_{db0|d0c}-r_{bc}-r_{d},
\end{align}
\begin{align}
m_{3}=r_{db0|d0c}-r_{bd}-r_{c},
~
m_{4}=r_{bc}+r_{cd}+r_{bd}-r_{bcd}-r_{db0|d0c},
\end{align}
\begin{align}
m_{5}=r_{bcd}-r_{cd},~
m_{6}=r_{db0|d0c}-r_{cd}-r_{b},~
~
m_{7}=r_{bcd}-r_{bd},~
m_{8}=r_{bcd}-r_{bc},
\end{align}
\begin{align}
n_{1}=r_{e}+r_{f}+r_{g}-r_{gg|e0|0f},
~
n_{2}=r_{gg|e0|0f}-r_{e|f}-r_{g},
\end{align}
\begin{align}
n_{3}=r_{gg|e0|0f}-r_{e|f}-r_{f},~
n_{4}=r_{e|f}+r_{f|g}+r_{e|g}-r_{e|f|g}-r_{gg|e0|0f},
\end{align}
\begin{align}\label{equh0210}
n_{5}=r_{e|f|g}-r_{f|g},~
n_{6}=r_{gg|e0|0f}-r_{f|g}-r_{e},~
~
n_{7}=r_{e|f|g}-r_{e|g},~
n_{8}=r_{e|f|g}-r_{e|f}.
\end{align}
The block columns of $S_{A}$  are $(n_{1},n_{2},n_{3},n_{4},n_{5},n_{4},n_{6}.n_{7},n_{8},n-r_{e|f|g}-t,t).$
The block rows of $S_{A}$  are $(m_{1},m_{2},m_{3},m_{4},m_{5},m_{4},m_{6}.m_{7},m_{8},m-r_{bcd}-t,t).$

\end{theorem}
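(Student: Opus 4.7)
The plan is to argue constructively in three stages, building on Lemma~\ref{lemma00}. First, I would apply that lemma to the triple $(B,C,D)$ to produce invertible matrices $\widetilde{P}, T_{1}, T_{2}, T_{3}$ with $B=\widetilde{P}\,S_{B}T_{1}$, $C=\widetilde{P}\,S_{C}T_{2}$, $D=\widetilde{P}\,S_{D}T_{3}$, yielding the ten-block row partition $m_{1},\ldots,m_{8},m-r_{bcd}$. Applying the same lemma to $(E^{*},F^{*},G^{*})$ and conjugate-transposing the resulting identities produces invertible matrices $\widetilde{Q},V_{1},V_{2},V_{3}$ with $E=V_{1}S_{E}\widetilde{Q}$, $F=V_{2}S_{F}\widetilde{Q}$, $G=V_{3}S_{G}\widetilde{Q}$ and the corresponding column partition $n_{1},\ldots,n_{8},n-r_{e|f|g}$. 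A direct verification against the rank relations inside Lemma~\ref{lemma00} shows that the block sizes so produced already agree with the formulas (\ref{equh025})-(\ref{equh0210}).

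Next, form $A_{0}:=\widetilde{P}^{-1}A\widetilde{Q}^{-1}$ and view it as a $10\times 10$ block matrix indexed by these partitions. The key observation is the residual gauge freedom in the preliminary decompositions: replacing $\widetilde{P}$ by $\widetilde{P}R^{-1}$ and simultaneously updating $T_{1},T_{2},T_{3}$ leaves $S_{B},S_{C},S_{D}$ unchanged precisely when $R$ stabilises the zero pattern of $(S_{B},S_{C},S_{D})$ under left multiplication. These stabiliser transformations act on $A_{0}$ as a group of admissible block row operations; symmetrically, the gauge freedom in $\widetilde{Q}$ yields a group of admissible block column operations. Cataloguing them is a direct inspection of the zero patterns in (\ref{equ0023}) and (\ref{equ0024}).

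The third stage uses these admissible operations to reduce $A_{0}$ to the shape $S_{A}$. The bottom-right block of $A_{0}$, of size $(m-r_{bcd})\times(n-r_{e|f|g})$, may be multiplied on both sides by arbitrary invertible matrices without disturbing any of $S_{B},\ldots,S_{G}$; I diagonalise it as $\operatorname{diag}(0,I_{t})$ with $t$ equal to its rank, thereby refining the last row and column blocks into sizes $(m-r_{bcd}-t,t)$ and $(n-r_{e|f|g}-t,t)$ respectively. The resulting $I_{t}$ pivots are then used to sweep the remainder of the last block row and last block column to zero, after which the other admissible operations are applied case by case to the residual $9\times 9$ subarray, reducing each block either to zero or to an identity of the shape displayed in (\ref{equ0022}). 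The identification of $t$ is completed by applying block elementary operations to the augmented matrix, which yields
\[
r\begin{pmatrix}A&B&C&D\\E&0&0&0\\F&0&0&0\\G&0&0&0\end{pmatrix}=r_{bcd}+r_{e|f|g}+t.
\]

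The hard part is the bookkeeping of this $10\times 10$ block reduction: for each index pair $(i,j)$ one must decide whether $A_{ij}$ is annihilated by the admissible operations (and hence is absent from $S_{A}$) or persists as one of the residual blocks in (\ref{equ0022}), and then check that the resulting sizes $m_{i},n_{j}$ satisfy the auxiliary rank identities (\ref{equh025})-(\ref{equh0210}). The latter is routine but lengthy, relying on the normal form of Lemma~\ref{lemma00} to evaluate the composite quantities such as $r_{db0|d0c}$ and $r_{gg|e0|0f}$ that appear on the right-hand sides of those identities.
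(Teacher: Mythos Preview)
Your first two stages and the diagonalisation/sweep portion of stage three coincide exactly with the paper's argument. The divergence is that you have misread the target form~(\ref{equ0022}): the blocks $A_{ij}$ for $1\le i,j\le 10$ (with the single exception $A_{10,10}=0$) are \emph{not} reduced to zeros or identities---they are simply the residual data-dependent blocks that arise after the $I_{t}$ sweep, with no further normalisation imposed. Consequently there is no ``hard part'' of deciding which $A_{ij}$ are annihilated and which persist; all of them persist, and the paper's proof stops immediately after the sweep, declaring the resulting $11\times 11$ block matrix to be $S_{A}$.

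Your gauge-freedom/stabiliser analysis is therefore unnecessary here: Theorem~\ref{theorem01} does not assert that $S_{A}$ is a canonical form, only that \emph{some} invertible $P,Q,T_{i},V_{i}$ achieve the displayed shape. The remaining content of the paper's proof is the extraction of the $m_{i},n_{i}$ formulas, which is done by reading off eight rank identities from $S_{B},S_{C},S_{D}$ (respectively $S_{E},S_{F},S_{G}$) and solving the resulting $8\times 8$ linear system---essentially what you call ``direct verification''. Your identification of $t$ via the augmented rank matches the paper as well.
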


\begin{proof}The proof is constructive. We establish the result through the following steps. First, we give equivalence
canonical forms of the general matrix arrays $(B,~C,~D)$ and $\begin{pmatrix}\begin{smallmatrix}E\\F\\G\end{smallmatrix}\end{pmatrix}$. Second,  we provide the  simultaneous decomposition of the   matrix array (\ref{array1}).

Step 1. For the matrix arrays $(B,~C,~D)$ and $\begin{pmatrix}\begin{smallmatrix}E\\F\\G\end{smallmatrix}\end{pmatrix}$, we can find eight   matrices
$P_{1}\in GL_{m}(\mathbb{H}),$ $Q_{1}\in GL_{n}(\mathbb{H}),$ $W_{B}\in GL_{p_{1}}(\mathbb{H}),$ $
W_{C}\in GL_{p_{2}}(\mathbb{H}),$ $W_{D}\in GL_{p_{3}}(\mathbb{H}),$ $W_{E}\in GL_{q_{1}}(\mathbb{H}),$ $W_{F}\in GL_{q_{2}}(\mathbb{H}),$ $W_{G}\in GL_{q_{3}}(\mathbb{H})$ such that
\begin{align*}
P_{1}\begin{pmatrix}B&C&D\end{pmatrix}\begin{pmatrix}W_{B}&0&0\\0&W_{C}&0\\0&0&W_{D}\end{pmatrix}
=\end{align*}
\begin{align*}
\begin{pmatrix}\begin{matrix} I &0&0&0&0&0\\
0 & I&0&0&0&0\\
0 & 0&I&0&0&0\\
0 & 0&0&I&0&0\\
0 & 0&0&0&I&0\\
0 & 0&0 &0&0&0\\
0 & 0&0 &0&0&0\\
0 & 0&0 &0&0&0\\
0 & 0&0 &0&0&0\\
0 & 0&0 &0&0&0
\end{matrix}~&
\begin{matrix} 0 &0&0&I&0&0\\
0 & 0&0&0&I&0\\
0 & 0&0 &0&0&0\\
0 & 0&0 &0&0&0\\
0 & 0&0 &0&0&0\\
I & 0&0&0&0&0\\
0 & I&0&0&0&0\\
0 & 0&I&0&0&0\\
0 & 0&0 &0&0&0\\
0 & 0&0 &0&0&0
\end{matrix}~&
\begin{matrix} 0 &0&0&0 &I&0\\
0 & 0&0 &0&0&0\\
0&0&0&I&0&0\\
0&I & 0&0&0&0\\
0 & 0&0 &0&0&0\\
0&I & 0&0&0&0\\
0 & 0&I&0&0&0\\
0 & 0&0 &0&0&0\\
I&0 & 0&0&0&0\\
0 & 0&0 &0&0&0
\end{matrix}
\end{pmatrix}
\begin{matrix} m_{1}\\
m_{2}\\
m_{3}\\
m_{4}\\
m_{5}\\
m_{4}\\
m_{6}\\
m_{7}\\
m_{8}\\
m-r_{bcd}
\end{matrix},
\end{align*}
\begin{align*}
\left[\begin{pmatrix}W_{E}&0&0\\0&W_{F}&0\\0&0&W_{G}\end{pmatrix}\begin{pmatrix}E\\F\\G\end{pmatrix}Q_{1}\right]^{*}
=\end{align*}
\begin{align*}
\begin{pmatrix}\begin{matrix} I &0&0&0&0&0\\
0 & I&0&0&0&0\\
0 & 0&I&0&0&0\\
0 & 0&0&I&0&0\\
0 & 0&0&0&I&0\\
0 & 0&0 &0&0&0\\
0 & 0&0 &0&0&0\\
0 & 0&0 &0&0&0\\
0 & 0&0 &0&0&0\\
0 & 0&0 &0&0&0
\end{matrix}~&
\begin{matrix} 0 &0&0&I&0&0\\
0 & 0&0&0&I&0\\
0 & 0&0 &0&0&0\\
0 & 0&0 &0&0&0\\
0 & 0&0 &0&0&0\\
I & 0&0&0&0&0\\
0 & I&0&0&0&0\\
0 & 0&I&0&0&0\\
0 & 0&0 &0&0&0\\
0 & 0&0 &0&0&0
\end{matrix}~&
\begin{matrix} 0 &0&0&0 &I&0\\
0 & 0&0 &0&0&0\\
0&0&0&I&0&0\\
0&I & 0&0&0&0\\
0 & 0&0 &0&0&0\\
0&I & 0&0&0&0\\
0 & 0&I&0&0&0\\
0 & 0&0 &0&0&0\\
I&0 & 0&0&0&0\\
0 & 0&0 &0&0&0
\end{matrix}
\end{pmatrix}
\begin{matrix}
n_{1}\\
n_{2}\\
n_{3}\\
n_{4}\\
n_{5}\\
n_{4}\\
n_{6}\\
n_{7}\\
n_{8}\\
n-r_{e|f|g}
\end{matrix}.
\end{align*}
Let
\begin{align*}
P_{1}AQ_{1}\triangleq
\begin{pmatrix}
A_{11}^{(1)} & \cdots&   A_{1,10}^{(1)}\\
\vdots& \ddots&   \vdots\\
A_{10,1}^{(1)} & \cdots&A_{10,10}^{(1)}
\end{pmatrix},
\end{align*}
where the symbol $\triangleq$ means ``equals by definition''.  For the matrix $A_{10,10}^{(1)}$, we can find
$P_{2}\in GL_{m-r_{bcd}}(\mathbb{H}),$ $Q_{2}\in GL_{n-r_{e|f|g}}(\mathbb{H})$ such that
\begin{align*}
P_{2}A_{10,10}^{(1)}Q_{2}=\begin{pmatrix}0&0\\0&I_{t}\end{pmatrix},t\triangleq r(A_{10,10}^{(1)}).
\end{align*}
Then we have
\begin{align*}
\begin{pmatrix}I_{r_{bcd}}&0\\0&P_{2}\end{pmatrix}\begin{pmatrix}
A_{11}^{(1)} & \cdots&   A_{1,10}^{(1)}\\
\vdots& \ddots&   \vdots\\
A_{10,1}^{(1)} & \cdots&A_{10,10}^{(1)}
\end{pmatrix}\begin{pmatrix}I_{r_{e|f|g}}&0\\0&Q_{2}\end{pmatrix}\triangleq
\begin{pmatrix}
A_{11}^{(2)} & \cdots&   A_{19}^{(2)}&   A_{1,10}^{(2)}& A_{1,11}^{(2)}\\
\vdots& \ddots&   \vdots&\vdots&\vdots\\
A_{91}^{(2)} & \cdots&A_{99}^{(2)}&A_{9,10}^{(2)}& A_{9,11}^{(2)}\\
A_{10,1}^{(2)} & \cdots&A_{10,9}^{(2)}&0&0\\A_{11,1}^{(2)} &\cdots&A_{11,9}^{(2)} &0&I_{t}
\end{pmatrix},
\end{align*}
\begin{align*}
\begin{pmatrix} I_{r_{bcd}}&0\\0&P_{2} \end{pmatrix}P_{1}\begin{pmatrix}B&C&D\end{pmatrix}\begin{pmatrix}
 W_{B}&0&0\\0&W_{C}&0\\0&0&W_{D} \end{pmatrix}
=
\end{align*}
\begin{align*}
\begin{pmatrix}\begin{matrix} I &0&0&0&0&0\\
0 & I&0&0&0&0\\
0 & 0&I&0&0&0\\
0 & 0&0&I&0&0\\
0 & 0&0&0&I&0\\
0 & 0&0 &0&0&0\\
0 & 0&0 &0&0&0\\
0 & 0&0 &0&0&0\\
0 & 0&0 &0&0&0\\
0 & 0&0 &0&0&0\\
0 & 0&0 &0&0&0
\end{matrix}~&
\begin{matrix} 0 &0&0&I&0&0\\
0 & 0&0&0&I&0\\
0 & 0&0 &0&0&0\\
0 & 0&0 &0&0&0\\
0 & 0&0 &0&0&0\\
I & 0&0&0&0&0\\
0 & I&0&0&0&0\\
0 & 0&I&0&0&0\\
0 & 0&0 &0&0&0\\
0 & 0&0 &0&0&0\\
0 & 0&0 &0&0&0
\end{matrix}~&
\begin{matrix} 0 &0&0&0 &I&0\\
0 & 0&0 &0&0&0\\
0&0&0&I&0&0\\
0&I & 0&0&0&0\\
0 & 0&0 &0&0&0\\
0&I & 0&0&0&0\\
0 & 0&I&0&0&0\\
0 & 0&0 &0&0&0\\
I&0 & 0&0&0&0\\
0 & 0&0 &0&0&0\\
0 & 0&0 &0&0&0
\end{matrix}
\end{pmatrix}
\begin{matrix} m_{1}\\
m_{2}\\
m_{3}\\
m_{4}\\
m_{5}\\
m_{4}\\
m_{6}\\
m_{7}\\
m_{8}\\
m-r_{bcd}-t\\
t
\end{matrix},
\end{align*}
\begin{align*}
\left[\begin{pmatrix} W_{E}&0&0\\0&W_{F}&0\\0&0&W_{G} \end{pmatrix}\begin{pmatrix}
 E\\F\\G \end{pmatrix}Q_{1}
\begin{pmatrix} I_{r_{e|f|g}}&0\\0&Q_{2} \end{pmatrix}\right]^{*}
=
\end{align*}
\begin{align*}
\begin{pmatrix}\begin{matrix} I &0&0&0&0&0\\
0 & I&0&0&0&0\\
0 & 0&I&0&0&0\\
0 & 0&0&I&0&0\\
0 & 0&0&0&I&0\\
0 & 0&0 &0&0&0\\
0 & 0&0 &0&0&0\\
0 & 0&0 &0&0&0\\
0 & 0&0 &0&0&0\\
0 & 0&0 &0&0&0\\
0 & 0&0 &0&0&0
\end{matrix}~&
\begin{matrix} 0 &0&0&I&0&0\\
0 & 0&0&0&I&0\\
0 & 0&0 &0&0&0\\
0 & 0&0 &0&0&0\\
0 & 0&0 &0&0&0\\
I & 0&0&0&0&0\\
0 & I&0&0&0&0\\
0 & 0&I&0&0&0\\
0 & 0&0 &0&0&0\\
0 & 0&0 &0&0&0\\
0 & 0&0 &0&0&0
\end{matrix}~&
\begin{matrix} 0 &0&0&0 &I&0\\
0 & 0&0 &0&0&0\\
0&0&0&I&0&0\\
0&I & 0&0&0&0\\
0 & 0&0 &0&0&0\\
0&I & 0&0&0&0\\
0 & 0&I&0&0&0\\
0 & 0&0 &0&0&0\\
I&0 & 0&0&0&0\\
0 & 0&0 &0&0&0\\
0 & 0&0 &0&0&0
\end{matrix}
\end{pmatrix}
\begin{matrix}
n_{1}\\
n_{2}\\
n_{3}\\
n_{4}\\
n_{5}\\
n_{4}\\
n_{6}\\
n_{7}\\
n_{8}\\
n-r_{e|f|g}-t\\t
\end{matrix}.
\end{align*}
Let
\begin{align*}
P_{3}=\begin{pmatrix}I_{r_{bcd}}&\begin{pmatrix}0&-A_{1,11}^{(2)}\\
\vdots&\vdots\\0&-A_{9,11}^{(2)}\end{pmatrix}\\0&I_{m-r_{bcd}}\end{pmatrix},~
Q_{3}=\begin{pmatrix}I_{r_{e|f|g}}&0\\
\begin{pmatrix}0&\cdots&0\\
-A_{11,1}^{(2)}&\cdots&-A_{11,9}^{(2)}\end{pmatrix}&I_{n-r_{e|f|g}}\end{pmatrix}.
\end{align*}
Then we obtain
\begin{align*}
P_{3}\begin{pmatrix}
A_{11}^{(2)} & \cdots&   A_{19}^{(2)}&   A_{1,10}^{(2)}& A_{1,11}^{(2)}\\
\vdots& \ddots&   \vdots&\vdots&\vdots\\
A_{91}^{(2)} & \cdots&A_{99}^{(2)}&A_{9,10}^{(2)}& A_{9,11}^{(2)}\\
A_{10,1}^{(2)} & \cdots&A_{10,9}^{(2)}&0&0\\A_{11,1}^{(2)} &\cdots&A_{11,9}^{(2)} &0&I_{t}
\end{pmatrix}Q_{3}\triangleq
\begin{pmatrix}
A_{11} & \cdots&   A_{19}&   A_{1,10}&0\\
\vdots& \ddots&   \vdots&\vdots&\vdots\\
A_{91}& \cdots&A_{99}&A_{9,10}& 0\\
A_{10,1}& \cdots&A_{10,9}&0&0\\0 &\cdots&0 &0&I_{t}
\end{pmatrix}.
\end{align*}
Let
\begin{align*}
P\triangleq P_{3}\begin{pmatrix}I_{r_{bcd}}&0\\0&P_{2}\end{pmatrix}P_{1},~
Q\triangleq Q_{1}\begin{pmatrix}I_{r_{e|f|g}}&0\\0&Q_{2}\end{pmatrix}Q_{3},
\end{align*}
\begin{align*}
T_{1}=W_{C},~T_{2}=W_{D},~T_{3}=W_{E},~V_{1}=W_{E},~V_{2}=W_{F},~V_{3}=W_{G}.
\end{align*}
Hence,  the matrices  $P\in GL_{m}(\mathbb{H}),~Q\in GL_{n}(\mathbb{H}),~T_{1}\in GL_{p_{1}}(\mathbb{H}),~
T_{2}\in GL_{p_{2}}(\mathbb{H}),~T_{3}\in GL_{p_{3}}(\mathbb{H}),~V_{1}\in GL_{q_{1}}(\mathbb{H}),~V_{2}\in GL_{q_{2}}(\mathbb{H}),~V_{3}\in GL_{q_{3}}(\mathbb{H})$ satisfy the equation (\ref{equ021}). It follows from $S_{A},S_{B},S_{C},S_{D},S_{E},S_{F},$ and $S_{G}$ in
(\ref{equ0022})-(\ref{equ0024}) that
\begin{align*}
\begin{pmatrix}
1&1&1&1&1&0&0&0\\
1&1&0&1&0&1&1&0\\
1&0&1&1&0&1&0&1\\
1&1&1&2&1&1&1&0\\
1&1&1&2&1&1&0&1\\
1&1&1&2&0&1&1&1\\
1&1&1&2&1&1&1&1\\
1&0&1&1&0&1&0&1\end{pmatrix}\begin{pmatrix}m_{1}\\
m_{2}\\
m_{3}\\
m_{4}\\
m_{5}\\
m_{6}\\
m_{7}\\
m_{8}\end{pmatrix}=\begin{pmatrix}r_{b}\\
r_{c}\\
r_{d}\\
r_{bc}\\
r_{bd}\\
r_{cd}\\
r_{bcd}\\
r_{db0|d0c}-r_{b}-r_{c}\end{pmatrix},
\end{align*}
\begin{align*}
\begin{pmatrix}
1&1&1&1&1&0&0&0\\
1&1&0&1&0&1&1&0\\
1&0&1&1&0&1&0&1\\
1&1&1&2&1&1&1&0\\
1&1&1&2&1&1&0&1\\
1&1&1&2&0&1&1&1\\
1&1&1&2&1&1&1&1\\
1&0&1&1&0&1&0&1\end{pmatrix}\begin{pmatrix}n_{1}\\
n_{2}\\
n_{3}\\
n_{4}\\
n_{5}\\
n_{6}\\
n_{7}\\
n_{8}\end{pmatrix}=\begin{pmatrix}r_{e}\\
r_{f}\\
r_{g}\\
r_{e|f}\\
r_{e|g}\\
r_{f|g}\\
r_{e|f|g}\\
r_{gg|e0|0f}-r_{e}-r_{f}\end{pmatrix}.
\end{align*}
Solving for $m_{i},n_{i},(i=1,\ldots,8)$ gives (\ref{equh025})-(\ref{equh0210}).
\end{proof}

\begin{remark}
Wang et. al. \cite{QWWangandyushaowen} did not give the values of $m_{i},~(i=1,2,\ldots,8)$ in Lemma \ref{lemma00}. As a special case of
Theorem \ref{theorem01}, we can derive all the
dimensions of identity matrices in the   equivalence canonical form of triple real quaternion matrices $(B,C,D)$, i.e.,  the values of $m_{i},~(i=1,2,\ldots,8)$ in Lemma \ref{lemma00}:
\begin{align*}
m_{1}=r_{b}+r_{c}+r_{d}-r_{db0|d0c},
~
m_{2}=r_{db0|d0c}-r_{bc}-r_{d},
\end{align*}
\begin{align*}
m_{3}=r_{db0|d0c}-r_{bd}-r_{c},
~
m_{4}=r_{bc}+r_{cd}+r_{bd}-r_{bcd}-r_{db0|d0c},
\end{align*}
\begin{align*}
m_{5}=r_{bcd}-r_{cd},~
m_{6}=r_{db0|d0c}-r_{cd}-r_{b},~
~
m_{7}=r_{bcd}-r_{bd},~
m_{8}=r_{bcd}-r_{bc}.
\end{align*}On the other hand, the values of $m_{i},n_{i},(i=1,\ldots,8)$ play an important role in investigating
the range of ranks    of the   general solutions to  (\ref{system002}) and (\ref{system001}).
\end{remark}

\section{\textbf{Some applications of the simultaneous decomposition of  (\ref{array1})}}

The simultaneous decomposition of (\ref{array1}) is useful in solving  the following questions:

\begin{itemize}
  \item  \S 3.2.   Give some solvability conditions and an expression of the general solution to the real quaternion matrix equation (\ref{system002}).

  \item \S 3.3.  Give the range of ranks  of the general solution   in the real quaternion matrix equation (\ref{system002}).

  \item \S 3.4. Give some solvability conditions and an expression of the general solution to the real quaternion matrix equation (\ref{system001}).

  \item \S 3.5. Give the range of ranks  of the general solution   in the real quaternion matrix equation (\ref{system001}).

\end{itemize}

\subsection{\textbf{Preliminaries}}

In this section, we give some lemmas which are used in the further development of this paper. The following Lemmas are due to
\cite{CHU3}, \cite{cohen1} and  \cite{Woerdeman1}-\cite{Woerdeman3} which can be generalized to $\mathbb{H}.$
\begin{lemma}\label{lemma04}(\cite{CHU3}-\cite{hezhuoheng}, \cite{Woerdeman1}-\cite{Woerdeman3})
Let
\begin{align}
H(X,Y)=\begin{pmatrix}A_{1}&B_{1}&C_{1}\\D_{1}&X&E_{1}\\
F_{1}&G_{1}&Y\end{pmatrix},
\end{align}
 where $A_{1}\in \mathbb{H}^{\tilde{n}\times n},B_{1}\in \mathbb{H}^{\tilde{n}\times m},C_{1}\in \mathbb{H}^{\tilde{n}\times p},
 D_{1}\in \mathbb{H}^{\tilde{m}\times n},E_{1}\in \mathbb{H}^{\tilde{m}\times p},F_{1}\in \mathbb{H}^{\tilde{p}\times n}$ and $G_{1}\in \mathbb{H}^{\tilde{p}\times m}$ are given, and
$X\in \mathbb{H}^{\tilde{m}\times m}$ and $Y\in \mathbb{H}^{\tilde{p}\times p}$ are variable matrices. Then,
\begin{align*}
\mathop {\max }\limits_{ X,Y } r\left[ {H\left( {X,Y} \right)} \right]=\min \left\{\tilde{m}+\tilde{p}+r_{a_{1}b_{1}c_{1}},
\tilde{m}+p+r_{a_{1}b_{1}|f_{1}g_{1}},m+\tilde{p}+r_{a_{1}c_{1}|d_{1}e_{1}},m+p+r_{a_{1}|d_{1}|f_{1}}\right\},
\end{align*}
\begin{align*}
\mathop {\min }\limits_{ X,Y } r\left[ {H\left( {X,Y} \right)} \right]=r_{a_{1}b_{1}c_{1}}+r_{a_{1}|d_{1}|f_{1}}+
\max \left\{r_{a_{1}c_{1}|d_{1}e_{1}}-r_{a_{1}c_{1}}-r_{a_{1}|d_{1}},r_{a_{1}b_{1}|f_{1}g_{1}}-r_{a_{1}b_{1}}-r_{a_{1}|f_{1}} \right\}.
\end{align*}
\end{lemma}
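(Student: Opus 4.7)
The plan is to reduce the two-variable extremal rank problem to iterated applications of single-variable max/min rank formulas. The single-variable version I would first record is: for $M(X)=\begin{pmatrix}A&B\\ C&X\end{pmatrix}$ with $X$ of fixed shape and free,
\begin{align*}
\max_X r(M(X))&=\min\Bigl\{\tilde{m}+r(A,B),\;m+r\begin{pmatrix}A\\C\end{pmatrix}\Bigr\},\\
\min_X r(M(X))&=r(A,B)+r\begin{pmatrix}A\\C\end{pmatrix}-r(A).
\end{align*}
These are classical Tian/Marsaglia--Styan identities; they carry over verbatim to $\mathbb{H}$ because the only ingredient is Gauss--Jordan reduction, valid in any division ring, and the extremizers are constructed by placing $A$ in equivalence canonical form and choosing $X$ to saturate or annihilate the complementary blocks of $B$ and $C$.

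For the lemma proper, I would view $Y$ as the outer free block by writing
\[
H(X,Y)=\begin{pmatrix}\widetilde{A}(X)&\widetilde{B}\\ \widetilde{C}&Y\end{pmatrix},\quad \widetilde{A}(X)=\begin{pmatrix}A_{1}&B_{1}\\ D_{1}&X\end{pmatrix},\;\widetilde{B}=\begin{pmatrix}C_{1}\\E_{1}\end{pmatrix},\;\widetilde{C}=\begin{pmatrix}F_{1}&G_{1}\end{pmatrix},
\]
and apply the single-variable formulas to $Y$ first. For the maximum, this produces the minimum of two expressions, each involving a rank of a block matrix that still contains the free parameter $X$; a second application of the single-variable max formula (now to $X$ inside those expressions) refines each of them to a further minimum, giving four candidate values. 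Identifying the resulting block ranks with $r_{a_{1}b_{1}c_{1}}$, $r_{a_{1}b_{1}|f_{1}g_{1}}$, $r_{a_{1}c_{1}|d_{1}e_{1}}$, and $r_{a_{1}|d_{1}|f_{1}}$ yields the stated formula for $\max_{X,Y} r(H(X,Y))$.

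For the minimum, the first application (with respect to $Y$) gives
\[
\min_{Y} r(H(X,Y))=r\bigl(\widetilde{A}(X),\widetilde{B}\bigr)+r\begin{pmatrix}\widetilde{A}(X)\\ \widetilde{C}\end{pmatrix}-r\bigl(\widetilde{A}(X)\bigr),
\]
and the task becomes minimizing this over $X$. I expect the main obstacle to be precisely this simultaneous minimization: the single-variable minima of the three summands $r(\widetilde{A}(X),\widetilde{B})$, $r\begin{pmatrix}\widetilde{A}(X)\\ \widetilde{C}\end{pmatrix}$, and $r(\widetilde{A}(X))$ are attained at genuinely different choices of $X$. One must show that only two out of the three possible pairings can be forced simultaneously, which is what produces the outer $\max$ between the terms $r_{a_{1}c_{1}|d_{1}e_{1}}-r_{a_{1}c_{1}}-r_{a_{1}|d_{1}}$ and $r_{a_{1}b_{1}|f_{1}g_{1}}-r_{a_{1}b_{1}}-r_{a_{1}|f_{1}}$ in the final formula. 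Verification that both candidates are attained would reduce, via an equivalence canonical form of the triple $(A_{1},B_{1},D_{1})$ in the style of \lemref{lemma00}, to an explicit choice of $X$, after which the algebraic cleanup delivers $r_{a_{1}b_{1}c_{1}}+r_{a_{1}|d_{1}|f_{1}}+\max\{\cdots\}$.
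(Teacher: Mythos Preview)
The paper does not prove this lemma: it is stated in \S3.1 as a known result imported from the cited references (Chu--Hung--Woerdeman, Cohen--Johnson--Rodman--Woerdeman, Woerdeman), with the remark that the formulas ``can be generalized to $\mathbb{H}$.'' So there is no proof in the paper to compare against; your outline is essentially the standard argument found in those references.

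One point to tighten in your sketch concerns the maximum. After optimizing over $Y$ you obtain $\max_{Y} r(H(X,Y))=\min\{\tilde{p}+r(\widetilde{A}(X),\widetilde{B}),\,p+r(\widetilde{A}(X);\widetilde{C})\}$, and you then ``apply the single-variable max formula to $X$ inside those expressions.'' Literally that computes $\min\{\max_X(\cdots),\max_X(\cdots)\}$, which a priori only dominates $\max_X\min\{\cdots\}$. The clean way around this is not to iterate but to observe directly that each of the four quantities in the stated minimum is an upper bound for $r(H(X,Y))$ for \emph{every} $X,Y$ (e.g.\ $r(H)\le \tilde m+\tilde p+r_{a_1b_1c_1}$ by deleting the last two block rows, etc.), so their minimum is an upper bound; then one exhibits a single pair $(X,Y)$ attaining it, which your canonical-form reduction handles. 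Your treatment of the minimum is on target: the genuine content is exactly the simultaneous-minimization obstruction you describe, and the resolution via a canonical form of $(A_1,B_1,D_1)$ is the approach taken in the cited papers.
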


\begin{lemma}\label{lemma01}(\cite{CHU3}-\cite{hezhuoheng}, \cite{Woerdeman1}-\cite{Woerdeman3})
Let
\begin{align}\label{equ031}
M(X,Y)=\begin{pmatrix}A_{1}&X\\Y&B_{1}\end{pmatrix}
\end{align}
 where $A_{1}\in \mathbb{H}^{m\times n}$ and $B_{1}\in  \mathbb{H}^{p\times q}$ are given, and
$X\in \mathbb{H}^{m\times q}$ and $Y\in \mathbb{H}^{p\times n}$ are variable matrices. Then,
\begin{align*}
\mathop {\max }\limits_{ X,Y } r\left[ {M\left( {X,Y} \right)} \right]=\min \left\{m+p,n+q,r_{a_{1}}+p+q,r_{b_{1}}+m+n \right\},
\end{align*}
\begin{align*}
\mathop {\min }\limits_{ X,Y } r\left[ {M\left( {X,Y} \right)} \right]=\max \left\{r_{a_{1}},~r_{b_{1}} \right\}.
\end{align*}
\end{lemma}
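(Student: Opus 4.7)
The plan is to prove the maximum and minimum rank formulas separately: establish the relevant bound, then exhibit an explicit choice of $X,Y$ attaining it.

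For $\max r(M)$, two of the four upper bounds are row and column counts, $r(M)\le m+p$ and $r(M)\le n+q$. The other two come from the subadditivity of rank over the division ring $\mathbb{H}$ (valid by the equivalence canonical form recalled before \lemref{lemma00}): splitting $M$ by rows gives
\[r(M)\le r(A_1,X)+r(Y,B_1)\le\bigl(r(A_1)+q\bigr)+p=r(A_1)+p+q,\]
and splitting by columns gives
\[r(M)\le r\begin{pmatrix}A_1\\ Y\end{pmatrix}+r\begin{pmatrix}X\\ B_1\end{pmatrix}\le n+\bigl(r(B_1)+m\bigr)=r(B_1)+m+n.\]
For $\min r(M)$, since $A_1$ and $B_1$ are submatrices of $M$, one has $r(M)\ge\max\{r(A_1),r(B_1)\}$.

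To show attainment I would use the invariance $r(M)=r(\operatorname{diag}(P_1,P_2)\,M\,\operatorname{diag}(Q_1,Q_2))$ for invertible $P_i,Q_i$, which replaces the free matrices $X,Y$ by $P_1XQ_2$ and $P_2YQ_1$ (still free). Choosing $P_i,Q_i$ so that $A_1$ and $B_1$ become $\operatorname{diag}(I_{r(A_1)},0)$ and $\operatorname{diag}(I_{r(B_1)},0)$, the matrix $M$ decomposes into a $4\times 4$ block array with two fixed identity blocks and the rest free. For the minimum, assuming $r(A_1)\ge r(B_1)$ without loss of generality, I would place an $I_{r(B_1)}$ block in the $(1,1)$-subblock of $X$ and in the $(1,1)$-subblock of $Y$; then each of the first $r(B_1)$ rows of the $B_1$-strip becomes identical to the corresponding row of the $A_1$-strip, so block row operations eliminate them and reduce the rank to exactly $r(A_1)$. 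For the maximum, insert identity blocks in the ``outer'' sub-blocks of $X$ and $Y$ (those aligned with the zero parts of $A_1$ and $B_1$), sized to saturate whichever of the four candidate bounds is smallest.

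The main obstacle is the case analysis for the maximum: the placements and sizes of the identities inserted in $X,Y$ depend on which of $m+p$, $n+q$, $r(A_1)+p+q$, $r(B_1)+m+n$ is the active bound. In each case the inequalities on $m,n,p,q,r(A_1),r(B_1)$ that single out that bound simultaneously guarantee that an identity block of the required size fits inside the corresponding free sub-block, so the verification is routine but must be done case by case.
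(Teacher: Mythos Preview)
The paper does not prove this lemma; it is quoted from the literature (Chu--Hung--Woerdeman, Cohen--Johnson--Rodman--Woerdeman, Woerdeman) and stated without argument. So there is no in-paper proof to compare against.

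Your sketch is correct and is essentially the standard argument from those references. The upper bounds are right: the row split gives $r(M)\le r(A_1,X)+r(Y,B_1)\le (r_{a_1}+q)+p$ via $r(A_1,X)\le r(A_1)+r(X)\le r_{a_1}+q$, and the column split is symmetric. The submatrix lower bound for the minimum is immediate. Your attainment construction for the minimum is exactly the usual one, and your description of the maximum (place identities in the ``off-diagonal'' free blocks of $X,Y$ sitting over the zero parts of the normalized $A_1,B_1$) is also the standard device; the case analysis you flag is genuine but, as you say, each of the four defining inequalities for the active bound is precisely what guarantees the required identity block fits. One small comment: in the minimum construction you might remark that over $\mathbb{H}$ the simultaneous equivalence $A_1\mapsto P_1A_1Q_1$, $B_1\mapsto P_2B_1Q_2$ to Smith-like form is available by the result recalled in the paper's introduction, so the reduction step is legitimate in the noncommutative setting.
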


\begin{lemma}\label{lemma03}(\cite{cohen1}, \cite{hezhuoheng}, \cite{Woerdeman1}-\cite{Woerdeman3})
Let
\begin{align*}
M_{2}=\begin{pmatrix}Y&D_{1}\\B_{1}&A_{1}\end{pmatrix},
\end{align*}where $A_{1},B_{1}$ and $D_{1}$ are given, and $Y\in \mathbb{H}^{n\times m}$ is
a variable matrix. Then,
\begin{align*}
\mathop {\max }\limits_{Y\in \mathbb{H}^{n\times m} } r\left( {M_{2}} \right)=\mathop {\min }\left\{ n+r_{a_{1}b_{1}},m+r_{a_{1}|d_{1}} \right\},~
\mathop {\min }\limits_{Y\in \mathbb{H}^{n\times m} } r\left( {M_{2}} \right)=r_{a_{1}b_{1}}+r_{a_{1}|d_{1}}-r_{a_{1}}.
\end{align*}
\end{lemma}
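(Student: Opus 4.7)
The plan is to reduce $M_2$ by invertible block-diagonal row and column operations that preserve the position of $Y$, so that the given blocks $A_1, B_1, D_1$ are simplified while $Y$ is only subjected to an invertible change of variables or an additive shift, and hence remains an arbitrary variable over $\mathbb{H}$. The main obstacle is simply the bookkeeping: I have to check at each step that the transformation matrices are block-diagonal or block-triangular with identity blocks in the rows and columns containing $Y$, so that $Y$ truly stays free throughout.

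First I would choose invertible $P_0, Q_0$ with $P_0 A_1 Q_0 = \begin{pmatrix} I_r & 0 \\ 0 & 0 \end{pmatrix}$, $r = r(A_1)$. Left-multiplying by $\mathrm{diag}(I_n, P_0)$ and right-multiplying by $\mathrm{diag}(I_m, Q_0)$ leaves $Y$ unchanged and partitions $P_0 B_1 = \begin{pmatrix} B_1' \\ B_2' \end{pmatrix}$ and $D_1 Q_0 = (D_1', D_2')$. Using the $I_r$ block, elementary row and column operations eliminate $B_1'$ and $D_1'$ while replacing $Y$ by $Y - D_1' B_1'$, which is again arbitrary. After permuting blocks this yields the equivalent form
\[
\begin{pmatrix} I_r & 0 & 0 \\ 0 & Y'' & D_2' \\ 0 & B_2' & 0 \end{pmatrix},
\]
so $r(M_2) = r(A_1) + r\begin{pmatrix} Y'' & D_2' \\ B_2' & 0 \end{pmatrix}$, where $Y''$ is a free $n \times m$ matrix. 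From the identity $r(P_0 A_1 Q_0,\ P_0 B_1) = r(A_1, B_1) = r_{a_1 b_1}$ applied to the block form above I read off $r(B_2') = r_{a_1 b_1} - r(A_1)$, and similarly $r(D_2') = r_{a_1|d_1} - r(A_1)$.

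Next, a further equivalence transformation brings $B_2'$ and $D_2'$ to canonical block identities of sizes $r' = r(B_2')$ and $r'' = r(D_2')$; the corresponding compatible partition of $Y''$ gives
\[
\begin{pmatrix} Y_{11} & Y_{12} & I_{r''} & 0 \\ Y_{21} & Y_{22} & 0 & 0 \\ I_{r'} & 0 & 0 & 0 \\ 0 & 0 & 0 & 0 \end{pmatrix},
\]
with the $Y_{ij}$ still free. Setting all $Y_{ij} = 0$ shows that rank $r' + r''$ is achievable, and the identities $I_{r'}$, $I_{r''}$ sit in disjoint row and column positions, so $r' + r''$ is also a lower bound; this handles the minimum. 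For the maximum, the bottom row block has rank $r'$ and the right column block has rank $r''$, giving the two upper bounds $n + r'$ and $m + r''$; the value $\min(n + r',\, m + r'')$ is attained by choosing $Y_{22}$ of rank $\min(n - r'',\, m - r')$ with all other $Y_{ij}$ zero. Adding back the contribution $r(A_1)$ of the $I_r$ block yields $\min_Y r(M_2) = r_{a_1 b_1} + r_{a_1|d_1} - r(A_1)$ and $\max_Y r(M_2) = \min\{n + r_{a_1 b_1},\ m + r_{a_1|d_1}\}$, as claimed.
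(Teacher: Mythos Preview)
Your proof is correct. The paper does not supply its own proof of this lemma; it is quoted as a known result from the cited references (Cohen--Johnson--Rodman--Woerdeman and Woerdeman's minimal-rank completion papers), so there is nothing to compare against. Your argument---reducing $A_1$ to $\begin{pmatrix} I_r & 0 \\ 0 & 0\end{pmatrix}$, eliminating against that identity block so that $Y$ is merely shifted, and then reading off the extremal ranks from the resulting pattern $\begin{pmatrix} Y'' & D_2' \\ B_2' & 0\end{pmatrix}$---is precisely the standard elementary derivation found in those sources, and all the bookkeeping (in particular that $r(B_2')=r_{a_1b_1}-r(A_1)$, $r(D_2')=r_{a_1|d_1}-r(A_1)$, and that the max reduces to $r'+r''+r(Y_{22})$) checks out.
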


\subsection{\textbf{Some solvability conditions and the   general
solution to  (\ref{system002})}}
In this section, the simultaneous decomposition of  (\ref{array1}) will be used to   solve the real quaternion matrix
equation (\ref{system002}).

\begin{theorem}\label{theorem04}
Let $A\in \mathbb{H}^{m\times n},
B\in \mathbb{H}^{m\times p_{1}},C\in \mathbb{H}^{m\times p_{2}},D\in \mathbb{H}^{m\times p_{3}},E\in \mathbb{H}^{q_{1}\times n},F\in \mathbb{H}^{q_{2}\times n}$  and $G\in \mathbb{H}^{q_{3}\times n}$ be given. Then the  equation (\ref{system002}) is consistent if and only if
\begin{align*}
A_{94}=A_{96},~A_{49}=A_{69},~A_{64}=A_{46},
\end{align*}
\begin{align*}
r_{abcd|e000|f000|g000}=r_{bcd}+r_{e|f|g},\quad \left(A_{1,10}^{*},~  \cdots, ~A_{9,10}^{*}\right)=0,\quad
\left(A_{10,1},~  \cdots, ~A_{10,9}\right)=0,
\end{align*}
\begin{align*}
A_{29}=0,~A_{92}=0,~A_{38}=0,~A_{83}=0,~A_{48}=0,~A_{84}=0,~A_{56}=0,~A_{65}=0,
\end{align*}
\begin{align}\label{equ00401}
A_{57}=0,~A_{75}=0,~A_{58}=0,~A_{85}=0,~A_{59}=0,~A_{95}=0,~A_{89}=0,~A_{98}=0.
\end{align}

In this case, the general solution to (\ref{system002}) can be expressed as
\begin{align*}
X=T_{1}^{-1}\widehat{X}V_{1}^{-1},\quad Y=T_{2}^{-1}\widehat{Y}V_{2}^{-1},\quad Z=T_{3}^{-1}\widehat{Z}V_{3}^{-1},
\end{align*}
where
\begin{align}\label{equ0041}
\widehat{X}=\bordermatrix{
~& n_{1}&n_{2} & n_{3}&n_{4}&n_{5}&q_{1}-r_{e} \cr
m_{1}&X_{11}&X_{12}&X_{13}&X_{14}&A_{15}&X_{16} \cr
m_{2}&X_{21}&X_{22}&A_{23}&A_{24}&A_{25}&X_{26} \cr
m_{3}&X_{31}&A_{32}&X_{33}&A_{34}-A_{36}&A_{35}&X_{36} \cr
m_{4}&X_{41}&A_{42}&A_{43}-A_{63}&A_{44}-A_{64}&A_{45}&X_{46} \cr
m_{5}&A_{51}&A_{52}&A_{53}&A_{54}&A_{55}&X_{56} \cr
p_{1}-r_{b}&X_{61}&X_{62}&X_{63}&X_{64}&X_{65}&X_{66}},
\end{align}
\begin{align}\label{equ0042}
\widehat{Y}=\bordermatrix{
~& n_{4}&n_{6} & n_{7}&n_{1}&n_{2}&q_{2}-r_{f} \cr
m_{4}&A_{66}-A_{64}&A_{67}-A_{47}&A_{68}&A_{61}-A_{41}+X_{41}&A_{62}&Y_{16} \cr
m_{6}&A_{76}-A_{74}&Y_{22}&A_{78}&Y_{24}&A_{72}&Y_{26} \cr
m_{7}&A_{86}&A_{87}&A_{88}&A_{81}&A_{82}&Y_{36} \cr
m_{1}&A_{16}-A_{14}+X_{14}&Y_{42}&A_{18}&Y_{44}&A_{12}-X_{12}&Y_{46} \cr
m_{2}&A_{26}&A_{27}&A_{28}&A_{21}-X_{21}&A_{22}-X_{22}&Y_{56} \cr
p_{2}-r_{c}&Y_{61}&Y_{62}&Y_{63}&Y_{64}&Y_{65}&Y_{66}},
\end{align}
\begin{align}\label{equ0043}
\widehat{Z}=\bordermatrix{
~& n_{8}&n_{4} & n_{6}&n_{3}&n_{1}&q_{3}-r_{g} \cr
m_{8}&A_{99}&A_{96}&A_{97}&A_{93}&A_{91}&Z_{16} \cr
m_{4}&A_{69}&A_{64}&A_{47}&A_{63}&A_{41}-X_{41}&Z_{26} \cr
m_{6}&A_{79}&A_{74}&A_{77}-Y_{22}&A_{73}&A_{71}-Y_{24}&Z_{36} \cr
m_{3}&A_{39}&A_{36}&A_{37}&A_{33}-X_{33}&A_{31}-X_{31}&Z_{46} \cr
m_{1}&A_{19}&A_{14}-X_{14}&A_{17}-Y_{42}&A_{13}-X_{13}&Z_{55}&Z_{56} \cr
p_{3}-r_{d}&Z_{61}&Z_{62}&Z_{63}&Z_{64}&Z_{65}&Z_{66}},
\end{align}
$A_{ij},T_{i},V_{i}$ are defined in Theorem \ref{theorem01},    the remaining $X_{ij},Y_{ij},Z_{ij}$ in (\ref{equ0041})-(\ref{equ0043}) are arbitrary matrices over $\mathbb{H}$
with appropriate sizes.
\end{theorem}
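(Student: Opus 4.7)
The plan is to reduce the equation to canonical form via Theorem~\ref{theorem01} and then read off both the solvability conditions and the general solution by comparing blocks. Concretely, I would substitute $A=PS_AQ$, $B=PS_BT_1$, $C=PS_CT_2$, $D=PS_DT_3$, $E=V_1S_EQ$, $F=V_2S_FQ$, $G=V_3S_GQ$ into $BXE+CYF+DZG=A$, and then left-multiply by $P^{-1}$ and right-multiply by $Q^{-1}$. Writing $\widehat X=T_1XV_1$, $\widehat Y=T_2YV_2$, $\widehat Z=T_3ZV_3$, the equation becomes
\begin{align*}
S_B\widehat X S_E+S_C\widehat Y S_F+S_D\widehat Z S_G=S_A,
\end{align*}
so inverting the substitution recovers the stated form $X=T_1^{-1}\widehat XV_1^{-1}$, $Y=T_2^{-1}\widehat YV_2^{-1}$, $Z=T_3^{-1}\widehat ZV_3^{-1}$. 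Since $S_B,S_C,S_D$ and $S_E,S_F,S_G$ are $0/I$ block matrices, each of the three products on the left simply lifts a subblock of $\widehat X$, $\widehat Y$, or $\widehat Z$ into a specific block position of the $11\times 11$ block grid determined by the row partition $(m_1,\ldots,m_8,m-r_{bcd}-t,t)$ and column partition $(n_1,\ldots,n_8,n-r_{e|f|g}-t,t)$.

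Next, partitioning $\widehat X,\widehat Y,\widehat Z$ conformally with $(S_B,S_E)$, $(S_C,S_F)$, $(S_D,S_G)$ respectively and computing the three products blockwise, each position $(i,j)$ of the block grid gives an equation in which $A_{ij}$ equals a sum of at most three variable blocks. Three cases occur: (i) positions where none of the three products contributes force $A_{ij}=0$, producing the solvability conditions (\ref{equ00401}), the conditions on the last row and column of $S_A$, and the rank identity $r_{abcd|e000|f000|g000}=r_{bcd}+r_{e|f|g}$ (the last being exactly the obstruction that kills the $I_t$ block and the $10$th row/column blocks $A_{10,j},A_{j,10}$); (ii) positions with exactly one contributing product determine that subblock of $\widehat X$, $\widehat Y$, or $\widehat Z$ uniquely, producing the many pure $A_{ij}$ entries in the diagrams (\ref{equ0041})--(\ref{equ0043}); (iii) positions where two or three products overlap yield equations of the form $A_{ij}=\widehat X_\ast+\widehat Y_\ast$ or $A_{ij}=\widehat X_\ast+\widehat Y_\ast+\widehat Z_\ast$. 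Resolving these overlap equations forces the symmetry conditions $A_{94}=A_{96}$, $A_{49}=A_{69}$, $A_{46}=A_{64}$, and expresses one of the overlapping summands as $A_{ij}$ minus the others; the arbitrary summand becomes a free parameter $X_{ij}$, $Y_{ij}$, or $Z_{ij}$. Finally, the block rows and columns indexed by $p_i-r_{(\cdot)}$ and $q_i-r_{(\cdot)}$ are not hit by any of the three products, so the last row and column of each of $\widehat X,\widehat Y,\widehat Z$ are entirely free, accounting for the remaining $X_{6\ast},Y_{6\ast},Z_{6\ast}$ and $X_{\ast 6},Y_{\ast 6},Z_{\ast 6}$ parameters.

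For the converse direction, I would verify by direct substitution that the matrices $\widehat X,\widehat Y,\widehat Z$ defined by (\ref{equ0041})--(\ref{equ0043}), with the $X_{ij},Y_{ij},Z_{ij}$ arbitrary, indeed satisfy $S_B\widehat XS_E+S_C\widehat YS_F+S_D\widehat ZS_G=S_A$ whenever the solvability conditions hold; this is a block-by-block check in which the constraints of type (iii) are used exactly once per overlapping position.

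The main obstacle is the exhaustive bookkeeping. The canonical forms of Lemma~\ref{lemma00} have ten block rows and eighteen block columns for $(S_B,S_C,S_D)$ and the analogous shape for $(S_E^*,S_F^*,S_G^*)$, so the block grid of overlaps is sizeable. One must enumerate every block position, identify which of the three products can land there, and then verify that each candidate equation is either (a) trivially $0=0$, (b) forces one of the explicit conditions in the theorem, (c) pins down a subblock of $\widehat X,\widehat Y,\widehat Z$, or (d) is an overlap whose resolution introduces one of the free parameters $X_{ij},Y_{ij},Z_{ij}$. Only after this case analysis is the consistency of the proposed formulas--including the mutually compatible occurrences of e.g.\ $X_{14}$ in both the $\widehat X$ and $\widehat Y$ blocks and of $X_{41}$ in both $\widehat X$ and $\widehat Z$--established.
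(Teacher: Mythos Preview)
Your proposal is correct and follows essentially the same approach as the paper: reduce via Theorem~\ref{theorem01} to $S_B\widehat X S_E+S_C\widehat Y S_F+S_D\widehat Z S_G=S_A$, expand the left-hand side as an explicit $11\times 11$ block matrix, and compare entry-by-entry with $S_A$ to read off the solvability conditions and the general solution, with the converse verified by direct substitution. The paper simply writes out the full block equation (\ref{equ0048}) and the resulting list of scalar block equations rather than organizing them into your three cases, but the content is identical.
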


\begin{proof}
It follows from Theorem \ref{theorem01} that the matrix equation (\ref{system002}) is equivalent to the matrix equation
\begin{align}\label{equ0044}
S_{B}(T_{1}XV_{1})S_{E}+S_{C}(T_{2}YV_{2})S_{F}+S_{D}(T_{3}ZV_{3})S_{G}=S_{A}.
\end{align}
Let the matrices
\begin{align}\label{equ0045}
\widehat{X}=T_{1}XV_{1}=\begin{pmatrix}X_{11}&\cdots&X_{16}\\
\vdots&\ddots&\vdots\\
X_{61}&\cdots&X_{66}\end{pmatrix},
\end{align}
\begin{align}
\widehat{Y}=T_{2}YV_{2}=\begin{pmatrix}Y_{11}&\cdots&Y_{16}\\
\vdots&\ddots&\vdots\\
Y_{61}&\cdots&Y_{66}\end{pmatrix},
\end{align}
\begin{align}\label{equ0047}
\widehat{Z}=T_{3}ZV_{3}=\begin{pmatrix}Z_{11}&\cdots&Z_{16}\\
\vdots&\ddots&\vdots\\
Z_{61}&\cdots&Z_{66}\end{pmatrix},
\end{align}
be partitioned in accordance with (\ref{equ0044}). Then it follows from (\ref{equ0022})-(\ref{equ0024}) and (\ref{equ0044})-(\ref{equ0047}) that
\begin{align*}
\begin{pmatrix}
\scriptstyle  X_{11}+Y_{44}+Z_{55}&\scriptstyle   X_{12}+Y_{45}&\scriptstyle   X_{13}+Z_{54}&\scriptstyle   X_{14}+Z_{52}&\scriptstyle   X_{15} &\scriptstyle    Y_{41}+Z_{52} &\scriptstyle   Y_{42}+Z_{53}&\scriptstyle    Y_{43}&\scriptstyle    Z_{51} &\scriptstyle      0 &\scriptstyle    0 \\
\scriptstyle  X_{21}+Y_{54}&\scriptstyle   X_{22}+Y_{55}&\scriptstyle   X_{23}&\scriptstyle   X_{24}&\scriptstyle   X_{25} &\scriptstyle    Y_{51} &\scriptstyle   Y_{52}&\scriptstyle    Y_{53} &\scriptstyle    0 &\scriptstyle      0 &\scriptstyle    0 \\
\scriptstyle   X_{31}+Z_{45}&\scriptstyle   X_{32}&\scriptstyle   X_{33}+Z_{44}&\scriptstyle   X_{34}+Z_{42}&\scriptstyle   X_{35} &\scriptstyle    Z_{42} &\scriptstyle   Z_{43} &\scriptstyle   0 &\scriptstyle    Z_{41} &\scriptstyle     0&\scriptstyle    0 \\
\scriptstyle  X_{41}+Z_{25}&\scriptstyle   X_{42}&\scriptstyle   X_{43}+Z_{24}&\scriptstyle   X_{44}+Z_{22}&\scriptstyle   X_{45} &\scriptstyle    Z_{22}&\scriptstyle   Z_{23} &\scriptstyle   0&\scriptstyle    Z_{21} &\scriptstyle     0 &\scriptstyle    0 \\
\scriptstyle  X_{51}&\scriptstyle   X_{52}&\scriptstyle   X_{53}&\scriptstyle   X_{54}&\scriptstyle   X_{55} &\scriptstyle    0 &\scriptstyle   0 &\scriptstyle    0 &\scriptstyle    0 &\scriptstyle      0 &\scriptstyle    0\\
\scriptstyle  Y_{14}+Z_{25} &\scriptstyle    Y_{15} &\scriptstyle    Z_{24} &\scriptstyle   Z_{22} &\scriptstyle    0 &\scriptstyle    Y_{11} +Z_{22}&\scriptstyle   Y_{12} +Z_{23} &\scriptstyle   Y_{13}&\scriptstyle   Z_{21} &\scriptstyle      0 &\scriptstyle    0 \\
\scriptstyle  Y_{24}+Z_{35} &\scriptstyle    Y_{25} &\scriptstyle    Z_{34} &\scriptstyle   Z_{32} &\scriptstyle    0 &\scriptstyle    Y_{21} +Z_{32}&\scriptstyle   Y_{22}+Z_{33} &\scriptstyle   Y_{23} &\scriptstyle     Z_{31}&\scriptstyle     0 &\scriptstyle    0 \\
\scriptstyle  Y_{34} &\scriptstyle   Y_{35}&\scriptstyle    0 &\scriptstyle   0 &\scriptstyle    0 &\scriptstyle    Y_{31} &\scriptstyle   Y_{32} &\scriptstyle   Y_{33} &\scriptstyle    0 &\scriptstyle     0 &\scriptstyle    0\\
\scriptstyle   Z_{15} &\scriptstyle    0&\scriptstyle    Z_{14} &\scriptstyle   Z_{12} &\scriptstyle    0 &\scriptstyle    Z_{12} &\scriptstyle   Z_{13} &\scriptstyle    0 &\scriptstyle    Z_{11} &\scriptstyle      0 &\scriptstyle    0 \\
\scriptstyle   0 &\scriptstyle    0 &\scriptstyle   0 &\scriptstyle   0 &\scriptstyle   0 &\scriptstyle    0 &\scriptstyle   0 &\scriptstyle    0 &\scriptstyle   0 &\scriptstyle      0 &\scriptstyle   0\\
\scriptstyle  0 &\scriptstyle    0 &\scriptstyle    0 &\scriptstyle    0 &\scriptstyle      0 &\scriptstyle    0 &\scriptstyle    0&\scriptstyle    0 &\scriptstyle    0 &\scriptstyle    0 &\scriptstyle     0
\end{pmatrix}
\end{align*}
\begin{align}\label{equ0048}
=\begin{pmatrix}
A_{11} & A_{12} & A_{13} &A_{14} & A_{15} & A_{16} &A_{17} & A_{18} & A_{19} &   A_{1,10} & 0 \\
A_{21} & A_{22} & A_{23} &A_{24} & A_{25} & A_{26} &A_{27} & A_{28} & A_{29} &   A_{2,10} & 0 \\
 A_{31} & A_{32} & A_{33} &A_{34} & A_{35} & A_{36} &A_{37} & A_{38} & A_{39} &   A_{3,10} & 0 \\
A_{41} & A_{42} & A_{43} &A_{44} & A_{45} & A_{46} &A_{47} & A_{48} & A_{49} &   A_{4,10} & 0 \\
 A_{51} & A_{52} & A_{53} &A_{54} & A_{55} & A_{56} &A_{57} & A_{58} & A_{59} &   A_{5,10} & 0\\
A_{61} & A_{62} & A_{63} &A_{64} & A_{65} & A_{66} &A_{67} & A_{68} & A_{69} &   A_{6,10} & 0 \\
A_{71} & A_{72} & A_{73} &A_{74} & A_{75} & A_{76} &A_{77} & A_{78} & A_{79} &   A_{7,10} & 0 \\
A_{81} & A_{82} & A_{83} &A_{84} & A_{85} & A_{86} &A_{87} & A_{88} & A_{89} &   A_{8,10} & 0\\
 A_{91} & A_{92} & A_{93} &A_{94} & A_{95} & A_{96} &A_{97} & A_{98} & A_{99} &   A_{9,10} & 0 \\
 A_{10,1} & A_{10,2} & A_{10,3} &A_{10,4} & A_{10,5} & A_{10,6} &A_{10,7} & A_{10,8} & A_{10,9} &   0 &0\\
0 & 0 & 0 & 0 &   0 & 0 & 0& 0 & 0 & 0 &  I_{t}\end{pmatrix}.
\end{align}

If the equation (\ref{system002}) has a solution $(X,Y,Z)$, then by (\ref{equ0048}), we have that the equalities in
(\ref{equ00401}) hold, and
\begin{align*}
&X_{11}+Y_{44}+Z_{55}=A_{11},~X_{12}+Y_{45}=A_{12},~X_{13}+Z_{54}=A_{13},~X_{14}+Z_{52}=A_{14},~X_{15}=A_{15}, \\
&Y_{41}+Z_{52}=A_{16},~Y_{42}+Z_{53}=A_{17},~ Y_{43}=A_{18},~ Z_{51}=A_{19},X_{21}+Y_{54}=A_{21},~X_{22}+Y_{55}=A_{22},\\
&X_{23}=A_{23},~X_{24}=A_{24},~X_{25}=A_{25},~Y_{51}=A_{26},~Y_{52}=A_{27},~Y_{53}=A_{28},~X_{31}+Z_{45}=A_{31}, \\
&X_{32}=A_{32}, ~X_{33}+Z_{44}=A_{33}, ~X_{34}+Z_{42}=A_{34}, ~X_{35}=A_{35}, ~ Z_{42}=A_{36}, ~Z_{43}=A_{37}, ~ Z_{41}=A_{39},\\
&X_{41}+Z_{25}=A_{41}, ~X_{42}=A_{42}, ~X_{43}+Z_{24}=A_{43}, ~X_{44}+Z_{22}=A_{44}, ~X_{45}=A_{45}, ~ Z_{22}=A_{46}, \\
&Z_{23}=A_{47},  ~ Z_{21}=A_{49}, ~X_{51}=A_{51}, ~X_{52}=A_{52}, ~X_{53}=A_{53}, ~X_{54}=A_{54}, ~X_{55}=A_{55}\\
&Y_{14}+Z_{25}=A_{61}, ~ Y_{15}=A_{62}, ~ Z_{24}=A_{63}, ~Z_{22}=A_{64},  ~ Y_{11}+Z_{22}=A_{66}, ~Y_{12}+Z_{23}=A_{67}, \\
& Y_{13}=A_{68}, ~Z_{21}=A_{69},  ~Y_{24}+Z_{35}=A_{71}, ~ Y_{25}=A_{72}, ~ Z_{34}=A_{73}, ~Z_{32}=A_{74},  ~ Y_{21} +Z_{32}=A_{76}, \\
& Y_{22}+Z_{33}=A_{77}, ~Y_{23}=A_{78} , ~  Z_{31}=A_{79},  ~Y_{34}=A_{81}, ~Y_{35}=A_{82}, ~ Y_{31}=A_{86}, ~Y_{32}=A_{87}, \\
& Y_{33}=A_{88},~Z_{15}=A_{91},  ~ Z_{14}=A_{93}, ~Z_{12}=A_{94},  ~ Z_{12}=A_{96}, ~Z_{13}=A_{97},   ~ Z_{11}=A_{99}.
\end{align*}
Hence, $(X,Y,Z)$ can be expressed as (\ref{equ0041})-(\ref{equ0043}) by (\ref{equ0045})-(\ref{equ0047}).

Conversely, assume that the equalities in (\ref{equ00401}) hold, then by (\ref{equ0022})-(\ref{equ0024}) and  (\ref{equ0044})-(\ref{equ0048}), it can be
verified that the matrices have the forms of (\ref{equ0041})-(\ref{equ0043}) is a solution of (\ref{equ0044}), i.e., (\ref{system002}).

\end{proof}

\begin{remark}
In our opinion the presented expression of the general solution  is more useful   than the expression found by Wang et al. \cite{QWWangandyushaowen}, since the latter can not be used to consider the maximal and minimal ranks of the general solution to (\ref{system002}).
\end{remark}

\subsection{\textbf{The range of ranks  of the   general
solution to (\ref{system002})}}

In this section, we consider the maximal and minimal ranks of the general
solution to the real quaternion matrix equation (\ref{system002}).

\begin{theorem}Let $A\in \mathbb{H}^{m\times n},
B\in \mathbb{H}^{m\times p_{1}},C\in \mathbb{H}^{m\times p_{2}},D\in \mathbb{H}^{m\times p_{3}},E\in \mathbb{H}^{q_{1}\times n},F\in \mathbb{H}^{q_{2}\times n}$  and $G\in \mathbb{H}^{q_{3}\times n}$ be given. Assume that  equation (\ref{system002}) is consistent. Then,
\begin{align*}
\mathop {\max }\limits_{BXE+CYF+DZG=A } r \left({Z }\right)
 =\min\left\{ p_{3},~q_{3},~p_{3}+q_{3}+r_{a|e|f}-r_{d}-r_{e|f|g},~p_{3}+q_{3}+r_{abc}-r_{g}-r_{bcd},\right.
\end{align*}
\begin{align*}
p_{3}+q_{3}+r_{d0ab0|da00c|0e000|00f00}-r_{gg|e0|0f}-r_{bd}-r_{cd},~p_{3}+q_{3}+r_{ac|e0}-r_{cd}-r_{e|f},
\end{align*}
\begin{align*}
\left. p_{3}+q_{3}+r_{ab|f0}-r_{f|g}-r_{bd},~p_{3}+q_{3}+r_{gg00|0ab0|a00c|e000|0f00}-r_{db0|d0c}-r_{e|g}-r_{f|g}\right\},
\end{align*}
\begin{align*}
&\mathop {\min }\limits_{ BXE+CYF+DZG=A } r \left({Z }\right)
\\=&r_{d0ab0|da00c|0e000|00f00}+r_{gg00|0ab0|a00c|e000|0f00}+r_{abc}+r_{a|e|f}+r_{bd}+r_{e|g}+r_{bc}+r_{e|f}\\&-r_{d0ab00|da00cb|0e0000|00f000}
-r_{gg00|0ab0|a00c|e000|0f00|0e00}
\\&+ \max\big\{ r_{ac|e0}-r_{d0ab0|da00c|0e000|00f00|00e00}-r_{gg000|0ab0c|a00c0|e0000|0f000},\\&
\qquad \qquad r_{ab|f0}-r_{d0ab0|da00c|0e000|00f00|0f000}-r_{gg000|0ab00|a00cb|e0000|0f000}\big\}.
\end{align*}
\end{theorem}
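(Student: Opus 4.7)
My plan is to reduce the problem to a rank optimization for the matrix $\widehat{Z}$ of equation (\ref{equ0043}) and then invoke the block-rank lemmas already collected in \S 3.1. Since $T_3$ and $V_3$ are invertible, Theorem \ref{theorem04} gives $r(Z)=r(T_3^{-1}\widehat{Z}V_3^{-1})=r(\widehat{Z})$, so the maxima and minima on the left-hand sides are exactly the extremal ranks of $\widehat{Z}$ as its free parameters vary. The first thing I would do is identify, from (\ref{equ0041})--(\ref{equ0043}), which entries of $\widehat{Z}$ are genuinely free. Reading off (\ref{equ0043}), the bottom row $(Z_{61},\dots,Z_{66})$ and the right column $(Z_{16},\dots,Z_{66})$ are free, and the eight ``interior'' positions $(2,5),(3,3),(3,5),(4,4),(4,5),(5,2),(5,3),(5,4),(5,5)$ are free because each is of the form $A_{ij}-X_{k\ell}$ or $A_{ij}-Y_{k\ell}$ with $X_{k\ell},Y_{k\ell}$ independent of every other entry of $\widehat{Z}$. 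The remaining positions are fixed linear combinations of the $A_{ij}$.

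Next I would compute the extremal ranks of $\widehat{Z}$ by peeling off free blocks. The cleanest route is to first use the free last row and column to apply Lemma \ref{lemma01}, reducing the problem to an extremal rank question for the $5\times 5$ interior matrix $\widehat{Z}'$; then apply Lemma \ref{lemma04} twice to the interior, using the two pairs of free entries on the third and fourth rows (namely $(3,3),(3,5)$ and $(4,4),(4,5)$) together with the free block on the fifth row. At each step the formulas produce min/max expressions in terms of ranks of fixed sub-blocks of $\widehat{Z}$, which in turn are submatrices of the canonical forms $S_A,S_B,S_C,S_D,S_E,S_F,S_G$ of Theorem \ref{theorem01}. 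For the maximum, combining these lemmas produces exactly the eight-term minimum in the theorem statement; for the minimum, the ``$r_{a_1b_1c_1}+r_{a_1|d_1|f_1}+\max\{\cdots,\cdots\}$'' shape of Lemma \ref{lemma04} is where the two alternatives inside the outer $\max$ of the minimum-rank formula will arise.

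The last, and I expect the main, step is bookkeeping: each rank of a sub-block of $\widehat{Z}$ (which involves only the $A_{ij}$ and identity blocks) must be translated back into a rank of a block matrix built from the original seven matrices $A,B,C,D,E,F,G$. For this I would use the identity
\begin{align*}
P\begin{pmatrix}A&B&C&D\\E&0&0&0\\F&0&0&0\\G&0&0&0\end{pmatrix}\begin{pmatrix}Q^{-1}&0\\0&\mathrm{diag}(T_1,T_2,T_3)^{-1}\end{pmatrix}\;\text{(up to permutation)}\;=\;\begin{pmatrix}S_A&S_B&S_C&S_D\\ V_1 E Q^{-1}&0&0&0\\ \vdots&0&0&0\end{pmatrix},
\end{align*}
together with the dimension formulas (\ref{equh025})--(\ref{equh0210}). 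Concretely, a submatrix of $\widehat{Z}$ obtained by keeping certain block-rows and block-columns corresponds, after restoring the appropriate $P,Q,T_i,V_i$ factors and the columns/rows of the canonical forms, to a known submatrix of a block matrix in $A,B,C,D,E,F,G$. In this way expressions such as $r_{a|e|f}$, $r_{abc}$, $r_{ac|e0}$, $r_{ab|f0}$, $r_{d0ab0|da00c|0e000|00f00}$, and $r_{gg00|0ab0|a00c|e000|0f00}$ fall out.

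The principal obstacle is not the rank lemmas themselves but this translation step: one has to identify, for each of the eight ``$\min$'' terms and each rank appearing in the ``$\min$'' formula, precisely which block-rows and block-columns of the canonical form are being selected, and then check case by case that adding the missing $A_{i,10}$, $A_{10,j}$, identity $I_t$, and zero blocks back in does not alter the rank. This is a routine but lengthy verification that uses only elementary row/column operations and the invertibility of $P,Q,T_1,T_2,T_3,V_1,V_2,V_3$; once it is carried out for the sub-block ranks that appear at the output of Lemmas \ref{lemma04} and \ref{lemma01}, the stated formulas for $\max r(Z)$ and $\min r(Z)$ follow.
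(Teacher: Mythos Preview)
Your plan is essentially the same as the paper's: reduce to $r(\widehat{Z})$, strip off the free last row and column with Lemma~\ref{lemma01}, optimize the $5\times 5$ interior block by block, and then translate each residual sub-block rank back to a rank in $A,B,C,D,E,F,G$ via the canonical forms of Theorem~\ref{theorem01} and the dimension identities (\ref{equh025})--(\ref{equh0210}). Your identification of the nine free interior positions is correct, and you correctly anticipate that the two-branch $\max$ in the minimum-rank formula comes from Lemma~\ref{lemma04}.

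The one place where your outline diverges from the paper is the interior peeling order. The free entries of the $5\times5$ interior form a staircase (row~5 free in columns $2$--$5$; column~5 free in rows $2$--$4$; plus the two isolated positions $(3,3)$ and $(4,4)$), and the paper handles this with Lemma~\ref{lemma03} on row~5, then Lemma~\ref{lemma03} again on column~5, and finally a single application of Lemma~\ref{lemma04} to the residual $4\times4$ block with free entries at $(3,3)$ and $(4,4)$. Your proposal to ``apply Lemma~\ref{lemma04} twice'' to the pairs $(3,3),(3,5)$ and $(4,4),(4,5)$ does not match the block pattern that lemma requires (it needs the two free entries in distinct block-rows \emph{and} distinct block-columns, not adjacent in a row), so as written that step would not go through cleanly. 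Swapping in two uses of Lemma~\ref{lemma03} followed by one use of Lemma~\ref{lemma04} fixes this and yields exactly the eight $\min$-terms and the two-branch $\max$ stated in the theorem; the translation bookkeeping is then exactly as you describe.
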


\begin{proof}
It follows from  Theorem \ref{theorem04} that the expression of $Z$ in (\ref{system002}) can be expressed as $Z=T_{3}^{-1}\widehat{Z}V_{3}^{-1},$
where $\widehat{Z}$ is given in (\ref{equ0043}). Clearly, $r(Z)=r(T_{3}^{-1}\widehat{Z}V_{3}^{-1})=r(\widehat{Z}).$ Now we consider the
maximal and minimal ranks of $\widehat{Z}$. Applying Lemma \ref{lemma01} to the variable matrices $(Z_{61},~Z_{62},~Z_{63},~Z_{64},~Z_{65})$ and $\begin{pmatrix}\begin{smallmatrix}Z_{16}\\Z_{26}\\Z_{36}\\Z_{46}\\Z_{56}\end{smallmatrix}\end{pmatrix}$ of $\widehat{Z}$, we obtain
\begin{align*}
\mathop {\max }\limits_{ Z_{i6},~Z_{6i},~i=1,\ldots,5 } r ({\widehat{Z }})
 =\min\left\{ p_{3},q_{3},p_{3}+q_{3}-r_{d}-r_{g}+r(\Theta),r(Z_{66})+r_{d}+r_{g}\right\},
\end{align*}
\begin{align*}
\mathop {\min }\limits_{ Z_{i6},~Z_{6i},~i=1,\ldots,5 } r ({\widehat{Z} })
 = \max\left\{ r(\Theta),r(Z_{66})\right\},
\end{align*}
where
\begin{align*}
\Theta=\bordermatrix{
~& n_{8}&n_{4} & n_{6}&n_{3}&n_{1} \cr
m_{8}&A_{99}&A_{96}&A_{97}&A_{93}&A_{91}  \cr
m_{4}&A_{69}&A_{64}&A_{47}&A_{63}&A_{41}-X_{41} \cr
m_{6}&A_{79}&A_{74}&A_{77}-Y_{22}&A_{73}&A_{71}-Y_{24} \cr
m_{3}&A_{39}&A_{36}&A_{37}&A_{33}-X_{33}&A_{31}-X_{31}  \cr
m_{1}&A_{19}&A_{14}-X_{14}&A_{17}-Y_{42}&A_{13}-X_{13}&Z_{55} }.
\end{align*}
Note that
$$\mathop {\max } r\left( {Z_{66}} \right)=\mathop {\min } \left\{p_{3}-r_{d},q_{3}-r_{g}\right\},~
\mathop {\max } r\left( {Z_{66}} \right)=0.$$
Hence, we have
\begin{align}
\mathop {\max }\limits_{ Z_{i6},~Z_{6i},~i=1,\ldots,6 } r ({\widehat{Z }})
 =\min\left\{ p_{3},q_{3},p_{3}+q_{3}-r_{d}-r_{g}+r(\Theta)\right\},
~
\mathop {\min }\limits_{ Z_{i6},~Z_{6i},~i=1,\ldots,6 } r ({\widehat{Z} })
 = r(\Theta).
\end{align}
Applying Lemma \ref{lemma03} to the variable matrices $\begin{pmatrix} A_{14}-X_{14},&A_{17}-Y_{42},&A_{13}-X_{13},&Z_{55} \end{pmatrix}$ of $\Theta$, we obtain
\begin{align*}
\mathop {\max }\limits_{ \begin{pmatrix}\begin{smallmatrix}A_{14}-X_{14},&A_{17}-Y_{42},&A_{13}-X_{13},&Z_{55}\end{smallmatrix}\end{pmatrix} } r \left({\Theta }\right)
 =\min\left\{ m_{1}+r(\Theta_{1}),n_{1}+n_{3}+n_{4}+n_{6}+r\begin{pmatrix}A_{99}\\
A_{69}\\
A_{79}\\
A_{39}\\
A_{19}\end{pmatrix}\right\},
\end{align*}
\begin{align*}
\mathop {\min }\limits_{ \begin{pmatrix}\begin{smallmatrix}A_{14}-X_{14},&A_{17}-Y_{42},&A_{13}-X_{13},&Z_{55}\end{smallmatrix}\end{pmatrix} } r \left({\Theta }\right)
 = r(\Theta_{1})+r\begin{pmatrix}A_{99}\\
A_{69}\\
A_{79}\\
A_{39}\\
A_{19}\end{pmatrix}-r\begin{pmatrix}A_{99}\\
A_{69}\\
A_{79}\\
A_{39}\end{pmatrix},
\end{align*}
where
\begin{align*}
\Theta_{1}=\bordermatrix{
~& n_{8}&n_{4} & n_{6}&n_{3}&n_{1} \cr
m_{8}&A_{99}&A_{96}&A_{97}&A_{93}&A_{91}  \cr
m_{4}&A_{69}&A_{64}&A_{47}&A_{63}&A_{41}-X_{41} \cr
m_{6}&A_{79}&A_{74}&A_{77}-Y_{22}&A_{73}&A_{71}-Y_{24} \cr
m_{3}&A_{39}&A_{36}&A_{37}&A_{33}-X_{33}&A_{31}-X_{31}   }.
\end{align*}
Applying Lemma \ref{lemma03} to the variable matrices $\begin{pmatrix}\begin{smallmatrix}A_{41}-X_{41}\\A_{71}-Y_{24}\\A_{31}-X_{31}\end{smallmatrix}\end{pmatrix}$ of $\Theta_{1}$, we obtain
\begin{align*}
\mathop {\max }\limits_{ \begin{pmatrix}\begin{smallmatrix}A_{41}-X_{41}\\A_{71}-Y_{24}\\A_{31}-X_{31}\end{smallmatrix}\end{pmatrix} } r \left({\Theta_{1} }\right)
 =\min\left\{ m_{3}+m_{4}+m_{6}+r\begin{pmatrix}A_{99},&A_{96},&A_{97},&A_{93},&A_{91}\end{pmatrix},n_{1}+r(\Theta_{2})\right\},
\end{align*}
\begin{align*}
\mathop {\min }\limits_{ \begin{pmatrix}\begin{smallmatrix}A_{41}-X_{41}\\A_{71}-Y_{24}\\A_{31}-X_{31}\end{smallmatrix}\end{pmatrix} } r \left({\Theta_{1} }\right)
 = r(\Theta_{2})+r\begin{pmatrix}A_{99},&A_{96},&A_{97},&A_{93},&A_{91}\end{pmatrix}-r\begin{pmatrix}A_{99},&A_{96},&A_{97},&A_{93}\end{pmatrix},
\end{align*}
where
\begin{align*}
\Theta_{1}=\bordermatrix{
~& n_{8}&n_{4} & n_{6}&n_{3}  \cr
m_{8}&A_{99}&A_{96}&A_{97}&A_{93}  \cr
m_{4}&A_{69}&A_{64}&A_{47}&A_{63}  \cr
m_{6}&A_{79}&A_{74}&A_{77}-Y_{22}&A_{73}  \cr
m_{3}&A_{39}&A_{36}&A_{37}&A_{33}-X_{33}   }.
\end{align*}
Applying Lemma \ref{lemma04} to the variable matrices $A_{77}-Y_{22}$ and $A_{33}-X_{33}$ of $\Theta_{2}$, we obtain
\begin{align*}
&\mathop {\max }\limits_{ A_{77}-Y_{22},~A_{33}-X_{33} } r \left({\Theta_{2} }\right)
 \nonumber\\=&\min\left\{ m_{3}+m_{6}+r\begin{pmatrix}A_{99}&A_{96}&A_{97}&A_{93}\\A_{69}&A_{64}&A_{47}&A_{63}\end{pmatrix},
 n_{3}+m_{6}+r\begin{pmatrix}A_{99}&A_{96}&A_{97}\\A_{69}&A_{64}&A_{47}\\A_{39}&A_{36}&A_{37}\end{pmatrix},\right.
\end{align*}
\begin{align}
\left.  m_{3}+n_{6}+r\begin{pmatrix}A_{99}&A_{96}&A_{93}\\A_{69}&A_{64}&A_{63}\\A_{79}&A_{74}&A_{73}\end{pmatrix},
n_{3}+n_{6}+r\begin{pmatrix}A_{99}&A_{96} \\A_{69}&A_{64} \\A_{79}&A_{74}\\A_{39}&A_{36}\end{pmatrix}
\right\}.
\end{align}
\begin{align*}
&\mathop {\min }\limits_{ A_{77}-Y_{22},~A_{33}-X_{33} } r \left({\Theta_{2} }\right)
  =r\begin{pmatrix}A_{99}&A_{96}&A_{97}&A_{93}\\A_{69}&A_{64}&A_{47}&A_{63}\end{pmatrix}+r\begin{pmatrix}A_{99}&A_{96} \\A_{69}&A_{64} \\A_{79}&A_{74}\\A_{39}&A_{36}\end{pmatrix}\\&+\max\left\{ r\begin{pmatrix}A_{99}&A_{96}&A_{97}\\A_{69}&A_{64}&A_{47}\\A_{39}&A_{36}&A_{37}\end{pmatrix}
  -r\begin{pmatrix}A_{99}&A_{96}&A_{97}\\A_{69}&A_{64}&A_{47} \end{pmatrix}-r\begin{pmatrix}A_{99}&A_{96} \\A_{69}&A_{64} \\A_{39}&A_{36} \end{pmatrix},\right.
\end{align*}
\begin{align*}
\left.   r\begin{pmatrix}A_{99}&A_{96}&A_{93}\\A_{69}&A_{64}&A_{63}\\A_{79}&A_{74}&A_{73}\end{pmatrix}-
 r\begin{pmatrix}A_{99}&A_{96}&A_{93}\\A_{69}&A_{64}&A_{63} \end{pmatrix}-
  r\begin{pmatrix}A_{99}&A_{96} \\A_{69}&A_{64} \\A_{79}&A_{74} \end{pmatrix}
\right\}.
\end{align*}
Hence,
\begin{align*}
\mathop {\max }\limits_{BXE+CYF+DZG=A } r \left({Z }\right)
 =\min\left\{ p_{3},q_{3},s_{1},s_{2},s_{3},s_{4},s_{5},s_{6}\right\},
\end{align*}
\begin{align*}
\mathop {\min }\limits_{BXE+CYF+DZG=A } r \left({Z }\right)
 =\max\left\{ s_{7},s_{8}\right\},
\end{align*}
where
\begin{align*}
s_{1}=p_{3}+q_{3}-r_{d}-r_{g}+n_{1}+n_{3}+n_{4}+n_{6}+r\begin{pmatrix}A_{99}\\
A_{69}\\
A_{79}\\
A_{39}\\
A_{19}\end{pmatrix},
\end{align*}
\begin{align*}
s_{2}=p_{3}+q_{3}-r_{d}-r_{g}+m_{1}+m_{3}+m_{4}+m_{6}+r\begin{pmatrix}A_{99},&A_{96},&A_{97},&A_{93},&A_{91}\end{pmatrix},
\end{align*}
\begin{align*}
s_{3}=p_{3}+q_{3}-r_{d}-r_{g}+m_{1}+n_{1}+m_{3}+m_{6}+r\begin{pmatrix}A_{99}&A_{96}&A_{97}&A_{93}\\A_{69}&A_{64}&A_{47}&A_{63}\end{pmatrix},
\end{align*}
\begin{align*}
s_{4}=p_{3}+q_{3}-r_{d}-r_{g}+m_{1}+n_{1}+ n_{3}+m_{6}+r\begin{pmatrix}A_{99}&A_{96}&A_{97}\\A_{69}&A_{64}&A_{47}\\A_{39}&A_{36}&A_{37}\end{pmatrix},
\end{align*}
\begin{align*}
s_{5}=p_{3}+q_{3}-r_{d}-r_{g}+m_{1}+n_{1}+m_{3}+n_{6}+r\begin{pmatrix}A_{99}&A_{96}&A_{93}\\A_{69}&A_{64}&A_{63}\\A_{79}&A_{74}&A_{73}\end{pmatrix},
\end{align*}
\begin{align*}
s_{6}=p_{3}+q_{3}-r_{d}-r_{g}+m_{1}+n_{1}+n_{3}+n_{6}+r\begin{pmatrix}A_{99}&A_{96} \\A_{69}&A_{64} \\A_{79}&A_{74}\\A_{39}&A_{36}\end{pmatrix},
\end{align*}
\begin{align*}
s_{7}= &r\begin{pmatrix}A_{99}&A_{96}&A_{97}&A_{93}\\A_{69}&A_{64}&A_{47}&A_{63}\end{pmatrix}+r\begin{pmatrix}A_{99}&A_{96} \\A_{69}&A_{64} \\A_{79}&A_{74}\\A_{39}&A_{36}\end{pmatrix}+r\begin{pmatrix}A_{99}\\
A_{69}\\
A_{79}\\
A_{39}\\
A_{19}\end{pmatrix}-r\begin{pmatrix}A_{99}\\
A_{69}\\
A_{79}\\
A_{39}\end{pmatrix}\\&+r\begin{pmatrix}A_{99},&A_{96},&A_{97},&A_{93},&A_{91}\end{pmatrix}-r\begin{pmatrix}A_{99},&A_{96},&A_{97},&A_{93}\end{pmatrix}\\&+ r\begin{pmatrix}A_{99}&A_{96}&A_{97}\\A_{69}&A_{64}&A_{47}\\A_{39}&A_{36}&A_{37}\end{pmatrix}
  -r\begin{pmatrix}A_{99}&A_{96}&A_{97}\\A_{69}&A_{64}&A_{47} \end{pmatrix}-r\begin{pmatrix}A_{99}&A_{96} \\A_{69}&A_{64} \\A_{39}&A_{36} \end{pmatrix},
\end{align*}
\begin{align*}
s_{8}=& r\begin{pmatrix}A_{99}&A_{96}&A_{97}&A_{93}\\A_{69}&A_{64}&A_{47}&A_{63}\end{pmatrix}+r\begin{pmatrix}A_{99}&A_{96} \\A_{69}&A_{64} \\A_{79}&A_{74}\\A_{39}&A_{36}\end{pmatrix}+r\begin{pmatrix}A_{99}\\
A_{69}\\
A_{79}\\
A_{39}\\
A_{19}\end{pmatrix}-r\begin{pmatrix}A_{99}\\
A_{69}\\
A_{79}\\
A_{39}\end{pmatrix}\\&+r\begin{pmatrix}A_{99},&A_{96},&A_{97},&A_{93},&A_{91}\end{pmatrix}-r\begin{pmatrix}A_{99},&A_{96},&A_{97},&A_{93}\end{pmatrix}\\&+
  r\begin{pmatrix}A_{99}&A_{96}&A_{93}\\A_{69}&A_{64}&A_{63}\\A_{79}&A_{74}&A_{73}\end{pmatrix}-
 r\begin{pmatrix}A_{99}&A_{96}&A_{93}\\A_{69}&A_{64}&A_{63} \end{pmatrix}-
  r\begin{pmatrix}A_{99}&A_{96} \\A_{69}&A_{64} \\A_{79}&A_{74} \end{pmatrix}.
\end{align*}
Now we pay attention to the ranks of the block matrices in $s_{i}$. Upon construction and computation, we obtain
\begin{align}\label{equh514}
r\begin{pmatrix}A_{99}&A_{96} \\A_{69}&A_{64} \\A_{79}&A_{74}\\A_{39}&A_{36}\end{pmatrix}=&r\begin{pmatrix}S_{G}&S_{G}&0&0\\0&S_{A}&S_{B}&0\\S_{A}&0&0&S_{C}\\S_{E}&0&0&0\\0&S_{F}&0&0\end{pmatrix}
-r(S_{E})-r(S_{F})-r(S_{B})-r(S_{C})\nonumber\\&-n_{3}-n_{4}-n_{6}-n_{8}\nonumber\\
=&r_{gg00|0ab0|a00c|e000|0f00}-r_{e}-r_{f}-r_{b}-r_{c}-n_{3}-n_{4}-n_{6}-n_{8},
\end{align}
\begin{align}
r\begin{pmatrix}A_{99}&A_{96}&A_{97}&A_{93}\\A_{69}&A_{64}&A_{47}&A_{63}\end{pmatrix}
=&r\begin{pmatrix}S_{D}&0&S_{A}&S_{B}&0\\S_{D}&S_{A}&0&0&S_{C}\\0&S_{E}&0&0&0\\0&0&S_{F}&0&0\end{pmatrix}
-r(S_{E})-r(S_{F})-r(S_{B})-r(S_{C})\nonumber\\&-m_{3}-m_{4}-m_{6}-m_{8}\nonumber\\=&
r_{d0ab0|da00c|0e000|00f00}-r_{e}-r_{f}-r_{b}-r_{c}-m_{3}-m_{4}-m_{6}-m_{8},
\end{align}
\begin{align}
r(A_{99},~A_{96},~A_{97},~A_{93},~A_{91})=
r(S_{A},~S_{B},~S_{C})-r(S_{B},~S_{C})=
r_{abc}-r_{bc},
\end{align}
\begin{align}
r\begin{pmatrix}A_{99}\\
A_{69}\\
A_{79}\\
A_{39}\\
A_{19}\end{pmatrix}=r\begin{pmatrix}S_{A}\\ S_{E}\\ S_{F}\end{pmatrix}-r\begin{pmatrix}S_{E}\\ S_{F}\end{pmatrix}
=r_{a|e|f}-r_{e|f},
\end{align}
\begin{align}
r\begin{pmatrix}
A_{99}\\
A_{69}\\
A_{79}\\
A_{39}\end{pmatrix}
=&r\begin{pmatrix}S_{G}&S_{G}&0&0\\0&S_{A}&S_{B}&0\\S_{A}&0&0&S_{C}\\S_{E}&0&0&0\\0&S_{F}&0&0\\0&S_{E}&0&0\end{pmatrix}
-r\begin{pmatrix}S_{E}\\S_{F}\end{pmatrix}-r(S_{E})-r(S_{B})-r(S_{C})\nonumber\\&-n_{4}-n_{6}-n_{8}\nonumber\\
=&r_{gg00|0ab0|a00c|e000|0f00|0e00}-r_{e|f}-r_{e}-r_{b}-r_{c}-n_{4}-n_{6}-n_{8},
\end{align}
\begin{align}
r\begin{pmatrix}A_{99},&A_{96},&A_{97},&A_{93}\end{pmatrix}=&
r\begin{pmatrix}S_{D}&0&S_{A}&S_{B}&0&0\\S_{D}&S_{A}&0&0&S_{C}&S_{B}\\0&S_{E}&0&0&0&0\\0&0&S_{F}&0&0&0\end{pmatrix}
-r(S_{E})-r(S_{F})-r(S_{B})\nonumber\\&-r(S_{B},~S_{C})-m_{4}-m_{6}-m_{8}\nonumber\\=&
r_{d0ab00|da00cb|0e0000|00f000}-r_{e}-r_{f}-r_{b}-r_{bc}-m_{4}-m_{6}-m_{8},
\end{align}
\begin{align}
r\begin{pmatrix}A_{99}&A_{96}&A_{93}\\A_{69}&A_{64}&A_{63}\\A_{79}&A_{74}&A_{73}\end{pmatrix}
=r\begin{pmatrix}S_{A}&S_{B}\\S_{F}&0\end{pmatrix}-r(S_{B})-r(S_{F})=r_{ab|f0}-r_{b}-r_{f},
\end{align}
\begin{align}
r\begin{pmatrix}A_{99}&A_{96}&A_{97}\\A_{69}&A_{64}&A_{47}\\A_{39}&A_{36}&A_{37}\end{pmatrix}
=r\begin{pmatrix}S_{A}&S_{C}\\S_{E}&0\end{pmatrix}-r(S_{E})-r(S_{C})=r_{ac|e0}-r_{e}-r_{c},
\end{align}
\begin{align}
r\begin{pmatrix}A_{99}&A_{96} \\A_{69}&A_{64} \\A_{39}&A_{36} \end{pmatrix}
=&r\begin{pmatrix}S_{G}&S_{G}&0&0&0\\0&S_{A}&S_{B}&0&S_{C}\\S_{A}&0&0&S_{C}&0\\S_{E}&0&0&0&0\\0&S_{F}&0&0&0\end{pmatrix}
-r(S_{E})-r(S_{F})-r(S_{C})-r(S_{B},~S_{C})\nonumber\\&-n_{3}-n_{4}-n_{6}-n_{8}\nonumber\\
=&r_{gg000|0ab0c|a00c0|e0000|0f000}-r_{e}-r_{f}-r_{c}-r_{bc}-n_{3}-n_{4}-n_{6}-n_{8},
\end{align}
\begin{align}
r\begin{pmatrix}A_{99}&A_{96} \\A_{69}&A_{64} \\A_{79}&A_{74} \end{pmatrix}
=&r\begin{pmatrix}S_{G}&S_{G}&0&0&0\\0&S_{A}&S_{B}&0&0\\S_{A}&0&0&S_{C}&S_{B}\\S_{E}&0&0&0&0\\0&S_{F}&0&0&0\end{pmatrix}
-r(S_{E})-r(S_{F})-r(S_{B})-r(S_{B},~S_{C})\nonumber\\&-n_{3}-n_{4}-n_{6}-n_{8}\nonumber\\
=&r_{gg000|0ab00|a00cb|e0000|0f000}-r_{e}-r_{f}-r_{b}-r_{bc}-n_{3}-n_{4}-n_{6}-n_{8},
\end{align}
\begin{align}
r\begin{pmatrix}A_{99}&A_{96}&A_{97}\\A_{69}&A_{64}&A_{47} \end{pmatrix}
=&r\begin{pmatrix}S_{D}&0&S_{A}&S_{B}&0\\S_{D}&S_{A}&0&0&S_{C}\\0&S_{E}&0&0&0\\0&0&S_{F}&0&0\\0&0&S_{E}&0&0\end{pmatrix}
-r(S_{B})-r(S_{C})-r(S_{E})-r\begin{pmatrix}S_{E}\\S_{F}\end{pmatrix}\nonumber\\&
-m_{3}-m_{4}-m_{6}-m_{8}\nonumber\\
=&r_{d0ab0|da00c|0e000|00f00|00e00}-r_{b}-r_{c}-r_{e}-r_{e|f}-m_{3}-m_{4}-m_{6}-m_{8},
\end{align}
\begin{align}\label{equh523}
r\begin{pmatrix}A_{99}&A_{96}&A_{93}\\A_{69}&A_{64}&A_{63} \end{pmatrix}
=&r\begin{pmatrix}S_{D}&0&S_{A}&S_{B}&0\\S_{D}&S_{A}&0&0&S_{C}\\0&S_{E}&0&0&0\\0&0&S_{F}&0&0\\0&S_{F}&0&0&0\end{pmatrix}
-r(S_{B})-r(S_{C})-r(S_{F})-r\begin{pmatrix}S_{E}\\S_{F}\end{pmatrix}\nonumber\\&-m_{3}-m_{4}-m_{6}-m_{8}\nonumber\\
=&r_{d0ab0|da00c|0e000|00f00|0f000}-r_{b}-r_{c}-r_{f}-r_{e|f}-m_{3}-m_{4}-m_{6}-m_{8}.
\end{align}
Hence from  (\ref{equh025})-(\ref{equh0210}) and (\ref{equh514})-(\ref{equh523}), we deduce that
\begin{align*}
s_{1}=p_{3}+q_{3}-r_{d}-r_{g}+n_{1}+n_{3}+n_{4}+n_{6}+r\begin{pmatrix}A_{99}\\
A_{69}\\
A_{79}\\
A_{39}\\
A_{19}\end{pmatrix}=p_{3}+q_{3}+r_{a|e|f}-r_{d}-r_{e|f|g},
\end{align*}
\begin{align*}
s_{2}=&p_{3}+q_{3}-r_{d}-r_{g}+m_{1}+m_{3}+m_{4}+m_{6}+r\begin{pmatrix}A_{99},&A_{96},&A_{97},&A_{93},&A_{91}\end{pmatrix}\\=&
p_{3}+q_{3}+r_{abc}-r_{g}-r_{bcd},
\end{align*}
\begin{align*}
s_{3}=&p_{3}+q_{3}-r_{d}-r_{g}+m_{1}+n_{1}+m_{3}+m_{6}+r\begin{pmatrix}A_{99}&A_{96}&A_{97}&A_{93}\\A_{69}&A_{64}&A_{47}&A_{63}\end{pmatrix}\\=&
p_{3}+q_{3}-r_{b}-r_{c}-r_{d}-r_{e}-r_{f}-r_{g}+r_{d0ab0|da00c|0e000|00f00}+n_{1}+m_{1}-m_{4}-m_{8}\\=&
p_{3}+q_{3}+r_{d0ab0|da00c|0e000|00f00}-r_{gg|e|f}-r_{bd}-r_{cd},
\end{align*}
\begin{align*}
s_{4}=&p_{3}+q_{3}-r_{d}-r_{g}+m_{1}+n_{1}+ n_{3}+m_{6}+r\begin{pmatrix}A_{99}&A_{96}&A_{97}\\A_{69}&A_{64}&A_{47}\\A_{39}&A_{36}&A_{37}\end{pmatrix}
\\=&p_{3}+q_{3}-r_{d}-r_{g}+m_{1}+n_{1}+ n_{3}+m_{6}+r_{ac|e0}-r_{e}-r_{c}
=p_{3}+q_{3}+r_{ac|e0}-r_{cd}-r_{e|f},
\end{align*}
\begin{align*}
s_{5}=&p_{3}+q_{3}-r_{d}-r_{g}+m_{1}+n_{1}+m_{3}+n_{6}+r\begin{pmatrix}A_{99}&A_{96}&A_{93}\\A_{69}&A_{64}&A_{63}\\A_{79}&A_{74}&A_{73}\end{pmatrix}\\
=&p_{3}+q_{3}-r_{d}-r_{g}+m_{1}+n_{1}+m_{3}+n_{6}+r_{ab|f0}-r_{b}-r_{f}=p_{3}+q_{3}+r_{ab|f0}-r_{f|g}-r_{bd},
\end{align*}
\begin{align*}
s_{6}=&p_{3}+q_{3}-r_{d}-r_{g}+m_{1}+n_{1}+n_{3}+n_{6}+r\begin{pmatrix}A_{99}&A_{96} \\A_{69}&A_{64} \\A_{79}&A_{74}\\A_{39}&A_{36}\end{pmatrix}\\=&
p_{3}+q_{3}-r_{b}-r_{c}-r_{d}-r_{e}-r_{f}-r_{g}+r_{gg00|0ab0|a00c|e000|0f00}+m_{1}+n_{1}-n_{4}-n_{8}\\=&
p_{3}+q_{3}+r_{gg00|0ab0|a00c|e000|0f00}-r_{db0|d0c}-r_{e|g}-r_{f|g},
\end{align*}
\begin{align*}
s_{7}=&r_{d0ab0|da00c|0e000|00f00}+r_{gg00|0ab0|a00c|e000|0f00}+r_{abc}+r_{a|e|f}+r_{bd}+r_{e|g}+r_{bc}+r_{e|f}\\&-r_{d0ab00|da00cb|0e0000|00f000}
-r_{gg00|0ab0|a00c|e000|0f00|0e00}
\\&+r_{ac|e0}-r_{d0ab0|da00c|0e000|00f00|00e00}-r_{gg000|0ab0c|a00c0|e0000|0f000},
\end{align*}
\begin{align*}
s_{8}=&r_{d0ab0|da00c|0e000|00f00}+r_{gg00|0ab0|a00c|e000|0f00}+r_{abc}+r_{a|e|f}+r_{bd}+r_{e|g}+r_{bc}+r_{e|f}\\&-r_{d0ab00|da00cb|0e0000|00f000}
-r_{gg00|0ab0|a00c|e000|0f00|0e00}
\\&+ r_{ab|f0}-r_{d0ab0|da00c|0e000|00f00|0f000}-r_{gg000|0ab00|a00cb|e0000|0f000}.
\end{align*}
\end{proof}

\begin{remark}
It is hard to derive the maximal and minimal ranks of the general solution to  (\ref{system002}) if we do not know the values of
$m_{i},n_{i},(i=1,2,3,4,6)$.

\end{remark}

Similarly, we can get the corresponding results on $X$ and $Y$. The proof is omitted.

\begin{theorem}Let $A\in \mathbb{H}^{m\times n},
B\in \mathbb{H}^{m\times p_{1}},C\in \mathbb{H}^{m\times p_{2}},D\in \mathbb{H}^{m\times p_{3}},E\in \mathbb{H}^{q_{1}\times n},F\in \mathbb{H}^{q_{2}\times n}$  and $G\in \mathbb{H}^{q_{3}\times n}$ be given. Assume that  equation (\ref{system002}) is consistent. Then,
\begin{align*}
\mathop {\max }\limits_{BXE+CYF+DZG=A } r \left({X }\right)
 =\min\left\{ p_{1},~q_{1},~p_{1}+q_{1}+r_{a|f|g}-r_{b}-r_{e|f|g},~p_{1}+q_{1}+r_{acd}-r_{e}-r_{bcd},\right.
\end{align*}
\begin{align*}
p_{1}+q_{1}+r_{b0ad0|ba00c|0g000|00f00}-r_{gg|e0|0f}-r_{bd}-r_{bc},~p_{1}+q_{1}+r_{ac|g0}-r_{bc}-r_{f|g},
\end{align*}
\begin{align*}
\left. p_{1}+q_{1}+r_{ad|f0}-r_{e|f}-r_{bd},~p_{1}+q_{1}+r_{ee00|0ad0|a00c|g000|0f00}-r_{db0|d0c}-r_{e|g}-r_{e|f}\right\},
\end{align*}
\begin{align*}
&\mathop {\min }\limits_{ BXE+CYF+DZG=A } r \left({X }\right)
\\=&r_{b0ad0|ba00c|0g000|00f00}+r_{ee00|0ad0|a00c|g000|0f00}+r_{acd}+r_{a|f|g}+r_{bd}+r_{e|g}+r_{cd}+r_{f|g}\\&
-r_{b0ad00|ba00cd|0g0000|00f000}-r_{ee00|0ad0|a00c|g000|0f00|0g00}
\\&+ \max\big\{ r_{ac|g0}-r_{b0ad0|ba00c|0g000|00f00|00g00}-r_{ee000|0ad0c|a00c0|g0000|0f000},\\&\qquad \qquad
r_{ad|f0}-r_{b0ad0|ba00c|0g000|00f00|0f000}-r_{ee000|0ad00|a00cd|g0000|0f000}\big\}.
\end{align*}
\end{theorem}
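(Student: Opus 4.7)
The plan is to mirror the argument given above for the extremal ranks of $Z$, exchanging the roles $(D,G,p_3,q_3)\leftrightarrow(B,E,p_1,q_1)$. By Theorem \ref{theorem04} the general solution satisfies $X=T_{1}^{-1}\widehat{X}V_{1}^{-1}$ with $\widehat{X}$ given in \eqref{equ0041}; since $T_{1}$ and $V_{1}$ are invertible, $r(X)=r(\widehat X)$, so the task reduces to computing $\max$ and $\min$ of $r(\widehat X)$ as the free blocks $X_{ij}$ range over $\mathbb{H}$-matrices of appropriate sizes.

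First I would peel off the outer border: the last row $(X_{61},\dots,X_{65})$, the last column $(X_{16},\dots,X_{56})^{\top}$, and the free corner $X_{66}$. Applying Lemma \ref{lemma01} to these boundary variables (and handling $X_{66}$ trivially, since a completely free block can have any rank up to its size) expresses the extremal ranks of $\widehat X$ in terms of the extremal ranks of the interior $5\times 5$ matrix $\Psi$ whose entries are the remaining free blocks and the fixed $A_{ij}$-combinations. Next, since the first column of $\Psi$ consists of the free blocks $X_{11},X_{21},X_{31},X_{41}$ with only $A_{51}$ fixed at the bottom, Lemma \ref{lemma03} eliminates this column and yields a $5\times 4$ matrix $\Psi_{1}$. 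The top row of $\Psi_{1}$ contains free blocks $X_{12},X_{13},X_{14}$ with only $A_{15}$ fixed at the right end, so a second application of Lemma \ref{lemma03} eliminates this row and produces a $4\times 4$ matrix $\Psi_{2}$ in which exactly two free blocks remain, namely $X_{22}$ and $X_{33}$, placed on a diagonal of the interior. Lemma \ref{lemma04} is then tailor-made to extract the extremal ranks with respect to these two free blocks.

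The final step is translation. Each internal rank expression produced by the chain Lemma \ref{lemma01} $\to$ Lemma \ref{lemma03} $\to$ Lemma \ref{lemma03} $\to$ Lemma \ref{lemma04} is the rank of a small block matrix built out of the $A_{ij}$'s; using $A_{ij}=(PAQ)_{ij}$ together with the canonical forms $S_B,S_C,S_D,S_E,S_F,S_G$ from Theorem \ref{theorem01}, these collapse to ranks of block matrices in $A,B,C,D,E,F,G$, exactly as in the computations \eqref{equh514}--\eqref{equh523} of the $Z$-case. Substituting the explicit values of $m_i,n_i$ from \eqref{equh025}--\eqref{equh0210} then reduces the eight candidate sums $s_1,\dots,s_8$ to the eight quantities appearing in the stated $\min$ and the two quantities appearing inside the $\max$ of the stated $\min$.

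The main obstacle, as in the $Z$-case, will be bookkeeping rather than ideas: one must fix the order of reductions so that the internal rank expressions are amenable to the identities in \eqref{equh514}--\eqref{equh523}, and must carefully track how the dimension parameters $m_i,n_i$ combine with the raw block-ranks to produce the compact final expressions (for example, the $p_1+q_1-r_b-r_e+(\text{block rank})$ pattern replacing the $p_3+q_3-r_d-r_g+(\text{block rank})$ pattern of the $Z$-case). Because the symmetry $B\leftrightarrow D$, $E\leftrightarrow G$ maps the $Z$-argument onto the $X$-argument verbatim at the level of block structures, no new analytic input is required, and the result for $Y$ follows by the further symmetric exchange $(C,F)\leftrightarrow(B,E)$ or $(C,F)\leftrightarrow(D,G)$.
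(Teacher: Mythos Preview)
Your proposal is correct and follows essentially the same approach as the paper: the paper proves the $Z$-case in detail and then states that the results for $X$ and $Y$ are obtained similarly, with the proof omitted. Your outline---peel off the free border of $\widehat{X}$ via Lemma~\ref{lemma01}, strip the free first column and then the free first row of the interior via two applications of Lemma~\ref{lemma03}, and finish with Lemma~\ref{lemma04} on the two remaining diagonal unknowns $X_{22},X_{33}$---is precisely the mirror of the paper's $\Theta\to\Theta_{1}\to\Theta_{2}$ reduction, and the translation step back to ranks of block matrices in $A,B,C,D,E,F,G$ proceeds exactly as in \eqref{equh514}--\eqref{equh523}.
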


\begin{theorem}
Let $A\in \mathbb{H}^{m\times n},
B\in \mathbb{H}^{m\times p_{1}},C\in \mathbb{H}^{m\times p_{2}},D\in \mathbb{H}^{m\times p_{3}},E\in \mathbb{H}^{q_{1}\times n},F\in \mathbb{H}^{q_{2}\times n}$  and $G\in \mathbb{H}^{q_{3}\times n}$ be given. Assume that  equation (\ref{system002}) is consistent.
Then,
\begin{align*}
\mathop {\max }\limits_{BXE+CYF+DZG=A } r \left({Y }\right)
 =\min\left\{ p_{2},~q_{2},~p_{2}+q_{2}+r_{a|e|g}-r_{c}-r_{e|f|g},~p_{2}+q_{2}+r_{abd}-r_{f}-r_{bcd},\right.
\end{align*}
\begin{align*}
p_{2}+q_{2}+r_{c0ab0|ca00d|0e000|00g00}-r_{gg|e0|0f}-r_{bc}-r_{cd},~p_{2}+q_{2}+r_{ad|e0}-r_{cd}-r_{e|g},
\end{align*}
\begin{align*}
\left. p_{2}+q_{2}+r_{ab|g0}-r_{f|g}-r_{bc},~p_{2}+q_{2}+r_{ff00|0ab0|a00d|e000|0g00}-r_{db0|d0c}-r_{e|f}-r_{f|g}\right\},
\end{align*}
\begin{align*}
&\mathop {\min }\limits_{ BXE+CYF+DZG=A } r \left({Y }\right)
\\=&r_{c0ab0|ca00d|0e000|00g00}+r_{ff00|0ab0|a00d|e000|0g00}+r_{abd}+r_{a|e|g}+r_{bc}+r_{e|f}+r_{bd}+r_{e|g}\\&
-r_{c0ab00|ca00db|0e0000|00g000}-r_{ff00|0ab0|a00d|e000|0g00|0e00}
\\&+ \max\big\{ r_{ad|e0}-r_{c0ab0|ca00d|0e000|00g00|00e00}-r_{ff000|0ab0d|a00d0|e0000|0g000},\\&\qquad \qquad
r_{ab|g0}-r_{c0ab0|ca00d|0e000|00g00|0g000}-r_{ff000|0ab00|a00db|e0000|0g000}\big\}.
\end{align*}
\end{theorem}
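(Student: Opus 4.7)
Since $Y = T_2^{-1}\widehat{Y}V_2^{-1}$ with $T_2, V_2$ invertible, we have $r(Y) = r(\widehat{Y})$, so it suffices to extremize $r(\widehat{Y})$ in (\ref{equ0042}) as $(X,Y,Z)$ ranges over all solutions of (\ref{system002}). The plan is to mirror, step for step, the three-stage reduction used for $Z$ in the previous theorem, replacing $\widehat{Z}$ by $\widehat{Y}$.

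First, I would catalogue the free parameters of $\widehat{Y}$: the entire outer block cross $\{Y_{i6}\}\cup\{Y_{6j}\}\cup\{Y_{66}\}$, the interior free entries $Y_{22}, Y_{24}, Y_{42}, Y_{44}$, and the five interior blocks absorbing the free $\widehat{X}$-parameters $X_{41}, X_{14}, X_{12}, X_{21}, X_{22}$. I would then apply Lemma \ref{lemma01} to the outer cross to obtain
\[
\max r(\widehat{Y}) = \min\{p_2,\, q_2,\, p_2+q_2-r_c-r_f+r(\Xi)\}, \qquad \min r(\widehat{Y}) = r(\Xi),
\]
where $\Xi$ denotes the interior $5\times 5$ block, exactly as in the reduction $\widehat{Z} \to \Theta$ used for $Z$.

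Second, I would observe that after a permutation of its block rows and columns, the pattern of free entries of $\Xi$ coincides with the triangular pattern exhibited by $\Theta$. I would therefore peel the longest free block row of $\Xi$ via Lemma \ref{lemma03}, then the longest free block column via Lemma \ref{lemma03} again, producing intermediates $\Xi_1$ and $\Xi_2$, and finally treat the two remaining scalar free entries via Lemma \ref{lemma04}. This yields $\max r(\Xi)$ and $\min r(\Xi)$ as explicit combinations of ranks of sub-blocks of $\Xi$ whose entries are pure $A_{ij}$'s.

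Third, following the template of (\ref{equh514})--(\ref{equh523}), I would rewrite each such sub-block rank as the rank of an augmented array built from $S_A, S_B, S_C, S_D, S_E, S_F, S_G$, and then use $A = PS_AQ$ together with the factorizations of $B,C,D,E,F,G$ to convert these into ranks of block matrices in $A, B, C, D, E, F, G$ of the form appearing in the statement. Substituting the closed-form expressions (\ref{equh025})--(\ref{equh0210}) for $m_i, n_i$ and collecting terms produces the claimed formulas. The main obstacle is purely the combinatorial bookkeeping: verifying that each sub-block rank of $\Xi$ matches the correct symbol such as $r_{c0ab0|ca00d|0e000|00g00}$ or $r_{ff00|0ab0|a00d|e000|0g00}$, and that all the $m_i, n_i$ cancellations go through. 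Since $\widehat{Y}$ is obtained from $\widehat{Z}$ by the formal interchange $(C,F) \leftrightarrow (D,G)$ with an appropriate relabeling of block indices, no genuinely new ingredient beyond the tools used for $Z$ is required.
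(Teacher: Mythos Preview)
Your proposal is correct and follows exactly the route the paper indicates: the paper omits the proof for $Y$, stating only that it is obtained ``similarly'' from the argument for $Z$, and your three-stage reduction (outer cross via Lemma~\ref{lemma01}, then two applications of Lemma~\ref{lemma03}, then Lemma~\ref{lemma04}, followed by the rank identifications in the style of (\ref{equh514})--(\ref{equh523})) is precisely that analogous computation. Your observation that the interior block $\Xi$ of $\widehat{Y}$ has, after a permutation of block rows and columns, the same free-entry pattern as $\Theta$, and that the whole result is governed by the symmetry $(C,F)\leftrightarrow(D,G)$ of the equation $BXE+CYF+DZG=A$, is exactly the mechanism behind the paper's ``similarly''.
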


\subsection{\textbf{Some solvability conditions and the   general
solution to  (\ref{system001})}}
In this section, the simultaneous decomposition of  (\ref{array1}) will be used to   solve the real quaternion matrix
equation (\ref{system001}).

\begin{theorem}\label{theorem05}
Let $A\in \mathbb{H}^{m\times n},
B\in \mathbb{H}^{m\times p_{1}},C\in \mathbb{H}^{m\times p_{2}},D\in \mathbb{H}^{m\times p_{3}},E\in \mathbb{H}^{q_{1}\times n},F\in \mathbb{H}^{q_{2}\times n}$  and $G\in \mathbb{H}^{q_{3}\times n}$ be given.
Then  the  equation (\ref{system001}) is consistent if and only if
\begin{align}\label{solvab340}
r_{abcd|e000|f000|g000}=r_{bcd}+r_{e|f|g},~  A_{89}=0,~A_{98}=0,~A_{i,10}=0,~A_{10,i}=0,~ (i=6,7,8,9).
\end{align}

In this case, the general solution to (\ref{system001}) can be expressed as
\begin{align*}
X=T_{1}^{-1}\widehat{X}Q, \quad W=P\widehat{W}V_{1}^{-1},\quad Y=T_{2}^{-1}\widehat{Y}V_{2}^{-1},\quad Z=T_{3}^{-1}\widehat{Z}V_{3}^{-1},
\end{align*}
where
\begin{align}\label{equh340}
\widehat{X}=\bordermatrix{
~& n_{1}&n_{2} & n_{3}&n_{4}&n_{5}&n_{4}&n_{6}&n_{7}&n_{8}&n-r_{e|f|g} \cr
m_{1}&X_{11}&X_{12}&X_{13}&X_{14}&X_{15}&X_{16}&X_{17}&X_{18}&X_{19}&A_{1,10}  \cr
m_{2}&X_{21}&X_{22}&X_{23}&X_{24}&X_{25}&X_{26}&X_{27}&X_{28}&A_{29}&A_{2,10}  \cr
m_{3}&X_{31}&X_{32}&X_{33}&X_{34}&X_{35}&X_{36}&X_{37}&A_{38}&X_{39}&A_{3,10}  \cr
m_{4}&X_{41}&X_{42}&X_{43}&X_{44}&X_{45}&X_{46}&X_{47}&A_{48}&A_{49}-A_{69}&A_{4,10}  \cr
m_{5}&X_{51}&X_{52}&X_{53}&X_{54}&X_{55}&A_{56}&A_{57}&A_{58}&A_{59}&A_{5,10}  \cr
p_{1}-r_{b}&X_{61}&X_{62}&X_{63}&X_{64}&X_{65}&X_{66}&X_{67}&X_{68}&X_{69}&X_{6,10} },
\end{align}
\begin{align}
\widehat{W}=\bordermatrix{
~&\scriptstyle   n_{1}&\scriptstyle  n_{2} &\scriptstyle   n_{3}&\scriptstyle  n_{4}&\scriptstyle  n_{5}&\scriptstyle   q_{2}-r_{e} \cr
\scriptstyle m_{1}&\scriptstyle  W_{11} &\scriptstyle  W_{12}&\scriptstyle  W_{13}&\scriptstyle  W_{14}&\scriptstyle  A_{15}-X_{15}&\scriptstyle  W_{16} \cr
\scriptstyle m_{2}&\scriptstyle  W_{21} &\scriptstyle  W_{22} &\scriptstyle  A_{23}-X_{23}&\scriptstyle  A_{24}-X_{24}&\scriptstyle  A_{25}-X_{25}&\scriptstyle  W_{26}  \cr
\scriptstyle m_{3}&\scriptstyle  W_{31} &\scriptstyle  A_{32}-X_{32}&\scriptstyle  W_{33} &\scriptstyle  W_{34} &\scriptstyle  A_{35}-X_{35}&\scriptstyle  W_{36}  \cr
\scriptstyle m_{4}&\scriptstyle  W_{41} &\scriptstyle  A_{42}-X_{42}&\scriptstyle  A_{43}-A_{63}+W_{63}-X_{43} &\scriptstyle  A_{44}-A_{46}+X_{46}-X_{44} &\scriptstyle  A_{45}-X_{45}&\scriptstyle  W_{46} \cr
\scriptstyle m_{5}&\scriptstyle  A_{51}-X_{51}&\scriptstyle  A_{52}-X_{52}&\scriptstyle  A_{53}-X_{53}&\scriptstyle  A_{54}-X_{54}&\scriptstyle  A_{55}-X_{55}&\scriptstyle  W_{56} \cr
\scriptstyle m_{4}&\scriptstyle  W_{61}&\scriptstyle  W_{62}&\scriptstyle  W_{63}&\scriptstyle  A_{64}-A_{46}+X_{46}&\scriptstyle  A_{65}&\scriptstyle  W_{66}  \cr
\scriptstyle m_{6}&\scriptstyle  W_{71}&\scriptstyle  W_{72}&\scriptstyle  W_{73}&\scriptstyle  W_{74}&\scriptstyle  A_{75}&\scriptstyle  W_{76} \cr
\scriptstyle m_{7}&\scriptstyle  W_{81}&\scriptstyle  W_{82}&\scriptstyle  A_{83}&\scriptstyle  A_{84}&\scriptstyle  A_{85}&\scriptstyle  W_{86} \cr
\scriptstyle m_{8}&\scriptstyle  W_{91}&\scriptstyle  A_{92}&\scriptstyle  W_{93}&\scriptstyle  A_{94}-A_{96}&\scriptstyle  A_{95}&\scriptstyle  W_{96}  \cr
\scriptstyle m-r_{bcd}&\scriptstyle  A_{10,1}&\scriptstyle  A_{10,2}&\scriptstyle  A_{10,3}&\scriptstyle  A_{10,4}&\scriptstyle  A_{10,5}&\scriptstyle  W_{10,6} },
\end{align}
\begin{align}
&\widehat{Y}=\nonumber\\&\bordermatrix{
~&\scriptscriptstyle  n_{4}&\scriptscriptstyle n_{6} &\scriptscriptstyle  n_{7}&\scriptscriptstyle n_{1}&\scriptscriptstyle n_{2}&\scriptscriptstyle q_{2}-r_{f} \cr
\scriptscriptstyle m_{4}&\scriptscriptstyle  A_{66}-A_{64}+X_{46}&\scriptscriptstyle  A_{67}-A_{47}+X_{47}&\scriptscriptstyle  A_{68}&\scriptscriptstyle  A_{61}-A_{41}+X_{41}+W_{41}-W_{61}&\scriptscriptstyle  A_{62}-W_{62}&\scriptscriptstyle  Y_{16} \cr
\scriptscriptstyle m_{6}&\scriptscriptstyle  A_{76}-A_{74}+W_{74}&\scriptscriptstyle  Y_{22}&\scriptscriptstyle  A_{78}&\scriptscriptstyle  Y_{24}&\scriptscriptstyle  A_{72}-W_{72}&\scriptscriptstyle  Y_{26} \cr
\scriptscriptstyle m_{7}&\scriptscriptstyle  A_{86}&\scriptscriptstyle  A_{87}&\scriptscriptstyle  A_{88}&\scriptscriptstyle  A_{81}-W_{81}&\scriptscriptstyle  A_{82}-W_{82}&\scriptscriptstyle  Y_{36} \cr
\scriptscriptstyle m_{1}&\scriptscriptstyle  A_{16}-A_{14}+X_{14}-X_{16}-W_{14}&\scriptscriptstyle  Y_{42}&\scriptscriptstyle  A_{18}-X_{18}&\scriptscriptstyle  Y_{44}&\scriptscriptstyle  A_{12}-(X_{12}+W_{12})&\scriptscriptstyle  Y_{46} \cr
\scriptscriptstyle m_{2}&\scriptscriptstyle  A_{26}-X_{26}&\scriptscriptstyle  A_{27}-X_{27}&\scriptscriptstyle  A_{28}-X_{28}&\scriptscriptstyle  A_{21}-(X_{21}+W_{21})&\scriptscriptstyle  A_{22}-(X_{22}+W_{22})&\scriptscriptstyle  Y_{56} \cr
\scriptscriptstyle p_{2}-r_{c}&\scriptscriptstyle  Y_{61}&\scriptscriptstyle  Y_{62}&\scriptscriptstyle  Y_{63}&\scriptscriptstyle  Y_{64}&\scriptscriptstyle  Y_{65}&\scriptscriptstyle  Y_{66}  },
\end{align}
\begin{align}\label{equh343}
&\widehat{Z}=\nonumber\\&\bordermatrix{
~&\scriptstyle  n_{8}&\scriptstyle n_{4} &\scriptstyle  n_{6}&\scriptstyle n_{3}&\scriptstyle n_{1}&\scriptstyle q_{3}-r_{g} \cr
\scriptstyle m_{8}&\scriptstyle  A_{99}&\scriptstyle  A_{96}&\scriptstyle  A_{97}&\scriptstyle  A_{93}-W_{93}&\scriptstyle  A_{91}-W_{91}&\scriptstyle  Z_{16} \cr
\scriptstyle m_{4}&\scriptstyle  A_{69}&\scriptstyle  A_{46}-X_{46}&\scriptstyle  A_{47}-X_{47}&\scriptstyle  A_{63}-W_{63}&\scriptstyle  A_{41}-(X_{41}+W_{41})&\scriptstyle  Z_{26} \cr
\scriptstyle m_{6}&\scriptstyle  A_{79}&\scriptstyle  A_{74}-W_{74}&\scriptstyle  A_{77}-Y_{22}&\scriptstyle  A_{73}-W_{73}&\scriptstyle  A_{71}-(Y_{24}+W_{71})&\scriptstyle  Z_{36} \cr
\scriptstyle m_{3}&\scriptstyle  A_{39}-X_{39}&\scriptstyle  A_{34}-(X_{34}+W_{34})&\scriptstyle  A_{37}-X_{37}&\scriptstyle  A_{33}-(X_{33}+W_{33})&\scriptstyle  A_{31}-(X_{31}+W_{31})&\scriptstyle  Z_{46} \cr
\scriptstyle m_{1}&\scriptstyle  A_{19}-X_{19}&\scriptstyle  A_{14}-(X_{14}+W_{14})&\scriptstyle  A_{17}-(X_{17}+Y_{42})&\scriptstyle  A_{13}-(X_{13}+W_{13})&\scriptstyle  Z_{55}&\scriptstyle  Z_{56} \cr
\scriptstyle p_{3}-r_{d}&\scriptstyle  Z_{61}&\scriptstyle  Z_{62}&\scriptstyle  Z_{63}&\scriptstyle  Z_{64}&\scriptstyle  Z_{65}&\scriptstyle  Z_{66} },
\end{align}
$P,Q,A_{ij},T_{i},V_{i}$ are defined in Theorem \ref{theorem01},    the remaining $X_{ij},Y_{ij},Z_{ij}$ in (\ref{equh340})-(\ref{equh343}) are arbitrary matrices over $\mathbb{H}$
with appropriate sizes.
\end{theorem}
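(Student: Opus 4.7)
The plan is to follow the same reduction strategy as in the proof of Theorem~\ref{theorem04}, using the simultaneous decomposition of Theorem~\ref{theorem01} to convert (\ref{system001}) into a canonical block form whose consistency and solution structure can be read off directly. First, substitute the factorizations $A = PS_AQ$, $B = PS_BT_1$, $C = PS_CT_2$, $D = PS_DT_3$, $E = V_1S_EQ$, $F = V_2S_FQ$, $G = V_3S_GQ$ into (\ref{system001}) and multiply on the left by $P^{-1}$ and on the right by $Q^{-1}$. Introducing the new unknowns
\[
\widehat{X} = T_1 X Q^{-1},\quad \widehat{W} = P^{-1} W V_1,\quad \widehat{Y} = T_2 Y V_2,\quad \widehat{Z} = T_3 Z V_3,
\]
the equation becomes the equivalent canonical equation
\[
S_B\widehat{X} + \widehat{W}S_E + S_C\widehat{Y}S_F + S_D\widehat{Z}S_G = S_A.
\]
Since $P,Q,T_1,T_2,T_3,V_1,V_2,V_3$ are all invertible, (\ref{system001}) is consistent if and only if this reduced equation is, and its general solution is obtained by inverting the change of variables.

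Next, partition $\widehat{X}, \widehat{W}, \widehat{Y}, \widehat{Z}$ conformally with $S_B, S_C, S_D, S_E, S_F, S_G$ as in (\ref{equh340})--(\ref{equh343}), and expand each of the four products block by block. Because $S_B, S_C, S_D, S_E, S_F, S_G$ have entries in $\{0, I\}$, every block entry of $S_B\widehat{X}$, $\widehat{W}S_E$, $S_C\widehat{Y}S_F$, $S_D\widehat{Z}S_G$ is either $0$ or one of the block entries $X_{ij}, W_{ij}, Y_{ij}, Z_{ij}$. Equating the $(i,j)$ block of the sum to $A_{ij}$ for each $i,j$ then produces a finite system of simple block equations.

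Two types of equations arise. Those for which the left-hand side is identically $0$ (because no variable block contributes at that position) yield the solvability constraints: the condition $r_{abcd|e000|f000|g000} = r_{bcd} + r_{e|f|g}$ expresses that the identity block of size $t$ in (\ref{equ0022}) is empty (so that the bottom-right $I_t$ imposes no contradiction with the zero block above it), while the positional conditions $A_{89}=0$, $A_{98}=0$ and $A_{i,10}=0$, $A_{10,i}=0$ for $i=6,7,8,9$ capture the remaining rows and columns that are simultaneously outside the support of $S_B\widehat{X}$, $\widehat{W}S_E$, $S_C\widehat{Y}S_F$, $S_DZ\widehat{Z}S_G$. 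The equations in which at least one variable block does appear can be solved explicitly: one chooses the overlapping variable blocks arbitrarily and then forces the remaining blocks to match the corresponding $A_{ij}$. This yields the explicit parametrization (\ref{equh340})--(\ref{equh343}), where the entries labelled $X_{ij}, W_{ij}, Y_{ij}, Z_{ij}$ are free parameters and the others are determined expressions in the $A_{ij}$ and these free parameters. Finally, converting back via the inverse change of variables produces the stated formulas for $X, W, Y, Z$.

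The main obstacle is purely bookkeeping. Unlike Theorem~\ref{theorem04}, the unknown $\widehat{W}$ is unconstrained on the right (only $S_E$ multiplies it), so $\widehat{W}S_E$ fills an entire vertical strip of width $r_e$ and reaches blocks that $\widehat{X}$ cannot touch. One must carefully account, block by block, for the simultaneous contributions of the four unknowns $\widehat{X}, \widehat{W}, \widehat{Y}, \widehat{Z}$, making sure that each parameter introduced (like the many $X_{ij}, W_{ij}, Y_{ij}, Z_{ij}$ that appear as free variables in (\ref{equh340})--(\ref{equh343})) is compatible across every block equation in which it appears. This amounts to tracking, for each of the roughly $11 \times 11$ block positions, exactly which of the four variable matrices can enter, and choosing a consistent order of elimination so that the resulting parametrization is both complete (every solution is captured) and non-redundant (no extra hidden constraints remain).
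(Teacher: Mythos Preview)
Your proposal is correct and follows essentially the same route as the paper: reduce (\ref{system001}) via Theorem~\ref{theorem01} to the canonical equation $S_B\widehat{X}+\widehat{W}S_E+S_C\widehat{Y}S_F+S_D\widehat{Z}S_G=S_A$, partition conformally, and read off both the solvability constraints (the blocks no variable can reach, together with $t=0$) and the explicit parametrization from the resulting block-by-block identities. The paper carries this out by writing the residual $S_A-S_B\widehat{X}-\widehat{W}S_E-S_C\widehat{Y}S_F-S_D\widehat{Z}S_G$ as a $2\times2$ array $\Omega_{ij}$ of block matrices and setting each $\Omega_{ij}=0$, which is exactly the bookkeeping you describe.
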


\begin{proof}
From Theorem \ref{theorem01}, we know that  rewrite  matrix equation (\ref{system001})  is consistent is equivalent to that the following matrix equation
\begin{align*}
P(S_{B}T_{1}XQ^{-1}+P^{-1}WV_{1}S_{E}+S_{C}T_{2}YV_{2}S_{F}+S_{D}T_{3}ZV_{3}S_{G})Q=PS_{A}Q.
\end{align*}
Because $P,Q$ are nonsingular, the matrix equation (\ref{system001})  can be written as
\begin{align}\label{equ70036}
S_{B}T_{1}XQ^{-1}+P^{-1}WV_{1}S_{E}+S_{C}T_{2}YV_{2}S_{F}+S_{D}T_{3}ZV_{3}S_{G}=S_{A}.
\end{align}Let the matrices
\begin{align} \label{equ70037}
\widehat{X}=T_{1}XQ^{-1}=\begin{pmatrix}X_{11}&\cdots&X_{1,11}\\
\vdots&\ddots&\vdots\\
X_{61}&\cdots&X_{6,11}\end{pmatrix},~\widehat{W}=P^{-1}WV_{1}=\begin{pmatrix}W_{11}&\cdots&W_{1,6}\\
\vdots&\ddots&\vdots\\
W_{11,1}&\cdots&W_{11,6}\end{pmatrix},
\end{align}
\begin{align}\label{equ70038}
\widehat{Y}=T_{2}YV_{2}=\begin{pmatrix}Y_{11}&\cdots&Y_{16}\\
\vdots&\ddots&\vdots\\
Y_{61}&\cdots&Y_{66}\end{pmatrix},~\widehat{Z}=T_{3}ZV_{3}=\begin{pmatrix}Z_{11}&\cdots&Z_{16}\\
\vdots&\ddots&\vdots\\
Z_{61}&\cdots&Z_{66}\end{pmatrix},
\end{align}be partitioned in accordance with (\ref{equ70036}). Substituting (\ref{equ70037}) and (\ref{equ70038}) into (\ref{equ70036}) yields
\begin{align}\label{equh347}
S_{A}-S_{B}\widehat{X}-\widehat{W}S_{E}-S_{C}\widehat{Y}S_{F}-S_{D}\widehat{Z}S_{G}\triangleq\begin{pmatrix}\Omega_{11}&\Omega_{12}\\ \Omega_{21}&\Omega_{22}\end{pmatrix}=0,
\end{align}
where
\begin{align}\label{equh349}
&\Omega_{11}=\nonumber\\&\begin{pmatrix}
\scriptscriptstyle A_{11}-(X_{11}+W_{11}+Y_{44}+Z_{55})&\scriptscriptstyle   \scriptscriptstyle   A_{12}-(X_{12}+W_{12}+Y_{45})&\scriptscriptstyle   \scriptscriptstyle   A_{13}-(X_{13}+W_{13}+Z_{54})&\scriptscriptstyle   \scriptscriptstyle   A_{14}-(X_{14}+W_{14}+Z_{52})&\scriptscriptstyle   \scriptscriptstyle   A_{15}-(X_{15}+W_{15})\\
\scriptscriptstyle A_{21}-(X_{21}+W_{21}+Y_{54})&\scriptscriptstyle   A_{22}-(X_{22}+W_{22}+Y_{55})&\scriptscriptstyle   A_{23}-(X_{23}+W_{23})&\scriptscriptstyle   A_{24}-(X_{24}+W_{24})&\scriptscriptstyle   A_{25}-(X_{25}+W_{25})\\
\scriptscriptstyle A_{31}-(X_{31}+W_{31}+Z_{45})&\scriptscriptstyle   A_{32}-(X_{32}+W_{32})&\scriptscriptstyle   A_{33}-(X_{33}+W_{33}+Z_{44})&\scriptscriptstyle   A_{34}-(X_{34}+W_{34}+Z_{42})&\scriptscriptstyle   A_{35}-(X_{35}+W_{35})\\
\scriptscriptstyle A_{41}-(X_{41}+W_{41}+Z_{25})&\scriptscriptstyle   A_{42}-(X_{42}+W_{42})&\scriptscriptstyle   A_{43}-(X_{43}+W_{43}+Z_{24})&\scriptscriptstyle   A_{44}-(X_{44}+W_{44}+Z_{22})&\scriptscriptstyle   A_{45}-(X_{45}+W_{45})\\
\scriptscriptstyle A_{51}-(X_{51}+W_{51})&\scriptscriptstyle   A_{52}-(X_{52}+W_{52})&\scriptscriptstyle   A_{53}-(X_{53}+W_{53})&\scriptscriptstyle   A_{54}-(X_{54}+W_{54})&\scriptscriptstyle   A_{55}-(X_{55}+W_{55})
\end{pmatrix},
\end{align}
\begin{align}
&\Omega_{12}=\nonumber\\&\begin{pmatrix}
\scriptstyle  A_{16}-(Y_{41}+Z_{52}+X_{16}), &\scriptstyle   A_{17}-(Y_{42}+Z_{53}+X_{17}), &\scriptstyle   A_{18}-(Y_{43}+X_{18}), &\scriptstyle   A_{19}-(Z_{51}+X_{19}), &\scriptstyle   A_{1,10}-X_{1,10},  &\scriptstyle    -X_{1,11}  \\
\scriptstyle  A_{26}-(Y_{51}+X_{26}), &\scriptstyle   A_{27}-(Y_{52}+X_{27}),&\scriptstyle    A_{28}-(Y_{53}+X_{28}), &\scriptstyle    A_{29}-X_{29}, &\scriptstyle    A_{2,10}- X_{2,10}, &\scriptstyle    -X_{2,11} \\
\scriptstyle  A_{36}-(Z_{42}+X_{36}), &\scriptstyle   A_{37}-(Z_{43}+X_{37}),&\scriptstyle   A_{38}-X_{38}, &\scriptstyle   A_{39}-(Z_{41}+X_{39)}, &\scriptstyle    A_{3,10}- X_{3,10},&\scriptstyle    -X_{3,11} \\
\scriptstyle  A_{46}-(Z_{22}+X_{46}), &\scriptstyle   A_{47}-(Z_{23}+X_{47}), &\scriptstyle   A_{48}-X_{48},&\scriptstyle   A_{49}-(Z_{21}+X_{49}), &\scriptstyle   A_{4,10}-X_{4,10}, &\scriptstyle    -X_{4,11} \\
\scriptstyle  A_{56}-X_{56}, &\scriptstyle   A_{57}-X_{57} &\scriptstyle   A_{58}- X_{58}, &\scriptstyle   A_{59}- X_{59},&\scriptstyle    A_{5,10}-X_{5,10},&\scriptstyle    -X_{5,11}
\end{pmatrix},
\end{align}
\begin{align}
\Omega_{21}=\begin{pmatrix}
\scriptstyle   A_{61}-(Y_{14}+W_{61}+Z_{25}) &\scriptstyle   A_{62}-(Y_{15}+W_{62}) &\scriptstyle   A_{63}-(Z_{24}+W_{63}) &\scriptstyle   A_{64}-(Z_{22}+W_{64}) &\scriptstyle    A_{65}-W_{65}\\
\scriptstyle   A_{71}-(Y_{24}+W_{71} +Z_{35})&\scriptstyle   A_{72}-(Y_{25}+W_{72}) &\scriptstyle   A_{73}-(Z_{34}+W_{73}) &\scriptstyle   A_{74}-(Z_{32}+W_{74}) &\scriptstyle    A_{75}-W_{75} \\
\scriptstyle   A_{81}-(Y_{34} +W_{81})&\scriptstyle   A_{82}-(Y_{35}+W_{82})&\scriptstyle   A_{83}-W_{83} &\scriptstyle   A_{84}-W_{84} &\scriptstyle    A_{85}-W_{85}\\
\scriptstyle   A_{91}-(Z_{15}+W_{91}) &\scriptstyle    A_{92}-W_{92}&\scriptstyle   A_{93}-(Z_{14}+W_{93}) &\scriptstyle   A_{94}-(Z_{12}+W_{94}) &\scriptstyle   A_{95}- W_{95}\\
\scriptstyle   A_{10,1}-W_{10,1}&\scriptstyle    A_{10,2}-W_{10,2} &\scriptstyle   A_{10,3}-W_{10,3} &\scriptstyle   A_{10,4}-W_{10,4}&\scriptstyle   A_{10,5}-W_{10,5} \\
-W_{11,1} &\scriptstyle    -W_{11,2}&\scriptstyle   -W_{11,3} &\scriptstyle   - W_{11,4} &\scriptstyle    -  W_{11,5}
\end{pmatrix},
\end{align}
\begin{align}\label{equh352}
\Omega_{22}=\begin{pmatrix}
\scriptstyle  A_{66}-(Y_{11} +Z_{22})&\scriptstyle   A_{67}-(Y_{12} +Z_{23}) &\scriptstyle   A_{68}-Y_{13}&\scriptstyle   A_{69}-Z_{21} &\scriptstyle      A_{6,10} &\scriptstyle    0 \\
\scriptstyle A_{76}-(Y_{21} +Z_{32})&\scriptstyle   A_{77}-(Y_{22}+Z_{33}) &\scriptstyle   A_{78}-Y_{23} &\scriptstyle    A_{79}-Z_{31}&\scriptstyle     A_{7,10} &\scriptstyle    0 \\
\scriptstyle A_{86}-Y_{31} &\scriptstyle   A_{87}-Y_{32}  &\scriptstyle   A_{88}-Y_{33} &\scriptstyle    A_{89} &\scriptstyle     A_{8,10}&\scriptstyle    0\\
\scriptstyle A_{96}-Z_{12} &\scriptstyle   A_{97}-Z_{13} &\scriptstyle    A_{98} &\scriptstyle    A_{99}-Z_{11} &\scriptstyle      A_{9,10} &\scriptstyle    0 \\
\scriptstyle  A_{10,6} &\scriptstyle   A_{10,7} &\scriptstyle    A_{10,8}&\scriptstyle   A_{10,9} &\scriptstyle      0&\scriptstyle   0\\
0 &\scriptstyle   0&\scriptstyle    0 &\scriptstyle    0 &\scriptstyle    0 &\scriptstyle     I_{t}
\end{pmatrix}.
\end{align}

If the equation (\ref{system001}) has a solution $(X,W,Y,Z)$, then by (\ref{equh347}), we have that the equalities in
(\ref{solvab340}) hold, and
$$\Omega_{11}=0,\Omega_{12}=0,\Omega_{21}=0,\Omega_{22}=0.$$

Conversely, assume that the equalities in (\ref{solvab340}) hold, then by (\ref{equ0022})-(\ref{equ0024}) and  (\ref{equh349})-(\ref{equh352}), it can be
verified that the matrices have the forms of (\ref{equh340})-(\ref{equh343}) is a solution of (\ref{equ70036}), i.e., (\ref{system002}).

\end{proof}

\begin{remark}
The presented expressions of $X$ and $W$ are more useful   than the expressions found by Wang and He \cite{wanghe}, since the latter can not be used to discuss the range of ranks of $X$ and $W$ to (\ref{system001}).
\end{remark}

\subsection{\textbf{The range of ranks  of the   general
solution to   (\ref{system001})}}
We in this section
consider the range of ranks  of the   general
solution to   (\ref{system001}).

\begin{theorem}Let $A\in \mathbb{H}^{m\times n},
B\in \mathbb{H}^{m\times p_{1}},C\in \mathbb{H}^{m\times p_{2}},D\in \mathbb{H}^{m\times p_{3}},E\in \mathbb{H}^{q_{1}\times n},F\in \mathbb{H}^{q_{2}\times n}$  and $G\in \mathbb{H}^{q_{3}\times n}$ be given. Assume that  equation (\ref{system001}) is consistent.
Then,
\begin{align*}
\mathop {\max }\limits_{BX+WE+CYF+DZG=A } r \left({X }\right)
 =&\min\big\{  p_{1},~n,~p_{1}+r_{acd|e00}-r_{bcd},~p_{1}+r_{a|e|f|g}-r_{b},\\&
 p_{1}+r_{b0ad0|ba00c|0g000|00f00|0e000|00e00}-r_{e|f|g}-r_{bc}-r_{bd},\\&\qquad p_{1}+r_{ac|e0|g0}-r_{bc},~p_{1}+r_{ad|e0|f0}-r_{bd}\big\},
\end{align*}
\begin{align*}
&\mathop {\min }\limits_{ BX+WE+CYF+DZG=A } r \left({X }\right)
\\=&r_{a|e|f|g}+r_{acd|e00}-r_{e}-r_{b0ad00|ba00cd|0g0000|00f000|0e0000|00e000}\\&+r_{bd}+r_{cd}+r_{b0ad0|ba00c|0g000|00f00|0e000|00e00}+r_{e|f|g}\\&+
\min\big\{r_{ac|e0|g0}-r_{b0ad0|ba00c|0g000|00f00|00g00|0e000|00e00}-r_{ad0c|a0c0|e000|f000|g000},
\\&\qquad \qquad r_{ad|e0|f0}-r_{b0ad0|ba00c|0g000|00f00|0e000|00e00|0f000}-r_{ad00|a0cd|e000|f000|g000} \big\}.
\end{align*}
\end{theorem}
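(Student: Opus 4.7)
By Theorem~\ref{theorem05} every solution of (\ref{system001}) has the form $X=T_{1}^{-1}\widehat{X}Q$, with $\widehat{X}$ the block matrix (\ref{equh340}) and $T_{1},Q$ invertible, so that $r(X)=r(\widehat{X})$. The problem therefore reduces to computing the maximum and minimum of $r(\widehat{X})$ as the free blocks $X_{ij}$ of (\ref{equh340}) range independently over $\mathbb{H}$. The matrix $\widehat{X}$ has six block rows and ten block columns; its fixed entries form a staircase pattern in the upper-right corner, namely $A_{1,10}$; $A_{29},A_{2,10}$; $A_{38},A_{3,10}$; $A_{48},A_{49}-A_{69},A_{4,10}$; and $A_{56},A_{57},A_{58},A_{59},A_{5,10}$. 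In particular, the entire block row $6$ and the entire block columns $1,2,3,4,5$ of $\widehat{X}$ are free.

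The plan is to peel off the free blocks in successive layers, in close analogy with the treatment of $\widehat{Z}$ in the preceding rank theorem. I would first eliminate the completely free block row $6$ by the standard formula for appending a free row, which replaces $r(\widehat{X})$ in the maximum by $\min\{r(\widehat{X}_{5})+p_{1}-r_{b},\,n\}$ and in the minimum by $r(\widehat{X}_{5})$, where $\widehat{X}_{5}$ denotes rows $1$--$5$. Next I would eliminate the free left half (block columns $1$--$5$) of $\widehat{X}_{5}$ by the dual column formula, leaving a $5\times 5$ block matrix $\widehat{X}_{5}^{R}$ whose entries are either fixed $A_{ij}$'s or one of the remaining twelve free blocks $X_{16},X_{17},X_{18},X_{19},X_{26},X_{27},X_{28},X_{36},X_{37},X_{39},X_{46},X_{47}$. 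Successive applications of Lemma~\ref{lemma03} to peel off free border rows and columns, together with Lemma~\ref{lemma04} at the innermost step to handle two non-adjacent free blocks, then produce $\max r(\widehat{X})$ and $\min r(\widehat{X})$ as the claimed minimum (respectively maximum) of a short list of ranks of block matrices built from the $A_{ij}$.

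Each of these $A_{ij}$-block ranks is translated into a rank of a matrix in the original data $A,B,C,D,E,F,G$ exactly as in equations (\ref{equh514})--(\ref{equh523}) of the preceding theorem. Using the identities $A=PS_{A}Q$, $B=PS_{B}T_{1}$, $C=PS_{C}T_{2}$, $D=PS_{D}T_{3}$, $E=V_{1}S_{E}Q$, $F=V_{2}S_{F}Q$, $G=V_{3}S_{G}Q$ of Theorem~\ref{theorem01}, one left- and right-multiplies by invertible block matrices assembled from $P,Q,T_{i},V_{i}$ to convert each $A_{ij}$-block matrix first into a block matrix in $S_{A},S_{B},\ldots,S_{G}$, and hence into a block matrix in the original seven matrices. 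Substituting the explicit values of $m_{i},n_{i}$ from (\ref{equh025})--(\ref{equh0210}) makes the integer contributions cancel and produces the six bounds inside the outer minimum for the maximum, and the two bounds inside the inner minimum for the minimum.

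The main obstacle is the combinatorial bookkeeping. The staircase of fixed entries in $\widehat{X}$ is more intricate than the L-shape encountered for $\widehat{Z}$ (notably, $X_{39}$ remains free even though $X_{38}$ does not, which breaks the clean triangular pattern), so the correct ordering of the applications of Lemmas~\ref{lemma04}, \ref{lemma01}, \ref{lemma03} requires some care. Matching each resulting block rank, such as $r_{b0ad0|ba00c|0g000|00f00|0e000|00e00}$, to the correct block-matrix identity in the original data is tedious but mechanical, and accounts for the bulk of the proof.
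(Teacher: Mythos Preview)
Your proposal is correct and follows essentially the same route as the paper: reduce to the rank of $\widehat{X}$, strip off the completely free row $6$ and block columns $1$--$5$ to reach the $5\times5$ block $\Phi_{1}=\widehat{X}_{5}^{R}$, then peel off the free border pieces with Lemma~\ref{lemma03} (first columns $6,7$, then row $1$) and finish with Lemma~\ref{lemma04} applied to the non-adjacent free entries $X_{28},X_{39}$, after which each residual block rank is rewritten in terms of $A,B,C,D,E,F,G$ exactly as you describe. The only cosmetic difference is that the paper handles your first two peeling steps in one application of the $M(X,Y)$ lemma rather than two, which makes no material difference.
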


\begin{proof}
It follows from  Theorem \ref{theorem05} that the expression of $X$ in (\ref{system001}) can be expressed as $X=T_{1}^{-1}\widehat{X}Q,$ where $\widehat{X}$ is given in (\ref{equh340}). Clearly, $r(X)=r(T_{1}^{-1}\widehat{X}Q)=r(\widehat{X}).$ Now we consider the
maximal and minimal ranks of $\widehat{X}$. Applying Lemma \ref{lemma02} to the variable matrices $\begin{pmatrix}\begin{smallmatrix}X_{11}&\cdots&X_{15}\\ \vdots&\ddots&\vdots\\X_{51}&\cdots&X_{55}\end{smallmatrix}\end{pmatrix}$ and $(X_{66},~X_{67},~X_{68},~X_{69},~X_{6,10})$ of $\widehat{X}$, we obtain
\begin{align*}
&\mathop {\max }\limits_{ \begin{pmatrix}\begin{smallmatrix}X_{11}&\cdots&X_{15}\\ \vdots&\ddots&\vdots\\X_{51}&\cdots&X_{55}\end{smallmatrix}\end{pmatrix},
 X_{6i},(i=6,\ldots,10)} r ({\widehat{X }})
 \\&=\min\left\{ p_{1},n,p_{1}-r_{b}+r_{e}+r(\Phi_{1}),r(X_{61},X_{62},X_{63},X_{64},X_{65})+r_{b}+n_{4}+n_{6}+n_{7}+n_{8}+n-r_{e|f|g}\right\},
\end{align*}
\begin{align*}
\mathop {\min }\limits_{ \begin{pmatrix}\begin{smallmatrix}X_{11}&\cdots&X_{15}\\ \vdots&\ddots&\vdots\\X_{51}&\cdots&X_{55}\end{smallmatrix}\end{pmatrix},
 X_{6i},(i=6,\ldots,10)} r ({\widehat{X }})
 = \mathop {\max }\left\{r(\Phi_{1}),~r(X_{61},X_{62},X_{63},X_{64},X_{65})\right\},
\end{align*}
where
\begin{align*}
\Phi_{1}=\bordermatrix{
~&  n_{4}&n_{6}&n_{7}&n_{8}&n-r_{e|f|g} \cr
m_{1}& X_{16}&X_{17}&X_{18}&X_{19}&A_{1,10}  \cr
m_{2}& X_{26}&X_{27}&X_{28}&A_{29}&A_{2,10}  \cr
m_{3}& X_{36}&X_{37}&A_{38}&X_{39}&A_{3,10}  \cr
m_{4}& X_{46}&X_{47}&A_{48}&A_{49}-A_{69}&A_{4,10}  \cr
m_{5}& A_{56}&A_{57}&A_{58}&A_{59}&A_{5,10}    }.
\end{align*}
Note that
$$\mathop {\max }\left\{ r(X_{61},X_{62},X_{63},X_{64},X_{65})\right\}=\mathop {\min }\left\{p_{1}-r_{b},~r_{b}\right\},
~\mathop {\min }\left\{ r(X_{61},X_{62},X_{63},X_{64},X_{65})\right\}=0.$$
Hence, we obtain
\begin{align*}
\mathop {\max }\limits_{ \begin{pmatrix}\begin{smallmatrix}X_{11}&\cdots&X_{15}\\ \vdots&\ddots&\vdots\\X_{51}&\cdots&X_{55}\end{smallmatrix}\end{pmatrix},
 X_{6,i},(i=1,\ldots,10)} r ({\widehat{X }})
 =\min\left\{ p_{1},n,p_{1}-r_{b}+r_{e}+r(\Phi_{1})\right\},
\end{align*}
\begin{align*}
\mathop {\min }\limits_{ \begin{pmatrix}\begin{smallmatrix}X_{11}&\cdots&X_{15}\\ \vdots&\ddots&\vdots\\X_{51}&\cdots&X_{55}\end{smallmatrix}\end{pmatrix},
 X_{6,i},(i=1,\ldots,10)} r ({\widehat{X }})
 = r(\Phi_{1}).
\end{align*}
Applying Lemma \ref{lemma03} to the variable matrices $\begin{pmatrix}\begin{smallmatrix}X_{16}&X_{17}\\X_{26}&X_{27}\\X_{36}&X_{37}\\X_{46}&X_{47} \end{smallmatrix}\end{pmatrix}$ of $\Phi_{1}$, we obtain
\begin{align*}
&\mathop {\max }\limits_{ \begin{pmatrix}\begin{smallmatrix}X_{16}&X_{17}\\X_{26}&X_{27}\\X_{36}&X_{37}\\X_{46}&X_{47} \end{smallmatrix}\end{pmatrix}} r \left({\Phi_{1} }\right)
 =\nonumber\\&\min\left\{ m_{1}+m_{2}+m_{3}+m_{4}+r\begin{pmatrix}A_{56},&A_{57},&A_{58},&A_{59},&A_{5,10}\end{pmatrix},n_{4}+n_{6}+r(\Phi_{2})\right\},
\end{align*}
\begin{align*}
\mathop {\min }\limits_{ \begin{pmatrix}\begin{smallmatrix}X_{16}&X_{17}\\X_{26}&X_{27}\\X_{36}&X_{37}\\X_{46}&X_{47} \end{smallmatrix}\end{pmatrix}} r \left({\Phi_{1} }\right)
 = r(\Phi_{2})+r\begin{pmatrix}A_{56},&A_{57},&A_{58},&A_{59},&A_{5,10}\end{pmatrix}-r\begin{pmatrix}A_{58},&A_{59},&A_{5,10}\end{pmatrix},
\end{align*}
where
\begin{align*}
\Phi_{2}=\bordermatrix{
~&   n_{7}&n_{8}&n-r_{e|f|g} \cr
m_{1}&  X_{18}&X_{19}&A_{1,10}  \cr
m_{2}&  X_{28}&A_{29}&A_{2,10}  \cr
m_{3}& A_{38}&X_{39}&A_{3,10}  \cr
m_{4}&  A_{48}&A_{49}-A_{69}&A_{4,10}  \cr
m_{5}&  A_{58}&A_{59}&A_{5,10}    }.
\end{align*}
Applying Lemma \ref{lemma03} to the variable matrices $(X_{18},~X_{19})$ of $\Phi_{2}$, we obtain
\begin{align*}
\mathop {\max }\limits_{ (X_{18},~X_{19}) } r \left({\Phi_{2} }\right)
 =\min\left\{ n_{7}+n_{8}+r\begin{pmatrix}A_{1,10}\\A_{2,10}\\A_{3,10}\\A_{4,10}\\A_{5,10}\end{pmatrix},m_{1}+r(\Phi_{3})\right\},
\end{align*}
\begin{align*}
\mathop {\min }\limits_{ (X_{18},~X_{19}) } r \left({\Phi_{2} }\right)
 = r(\Phi_{3})+r\begin{pmatrix}A_{1,10}\\A_{2,10}\\A_{3,10}\\A_{4,10}\\A_{5,10}\end{pmatrix}
 -r\begin{pmatrix}A_{2,10}\\A_{3,10}\\A_{4,10}\\A_{5,10}\end{pmatrix},
\end{align*}
where
\begin{align*}
\Phi_{3}=\bordermatrix{
~&   n_{7}&n_{8}&n-r_{e|f|g} \cr
m_{2}&  X_{28}&A_{29}&A_{2,10}  \cr
m_{3}& A_{38}&X_{39}&A_{3,10}  \cr
m_{4}&  A_{48}&A_{49}-A_{69}&A_{4,10}  \cr
m_{5}&  A_{58}&A_{59}&A_{5,10}    }.
\end{align*}
Applying Lemma \ref{lemma04} to the variable matrices $X_{28}$ and $X_{39}$ of $\Phi_{3}$, we obtain
\begin{align*}
&\mathop {\max }\limits_{ X_{28},X_{39}} r \left({\Phi_{3}}\right)
 \nonumber\\=&\min\left\{ m_{2}+m_{3}+r\begin{pmatrix}A_{48}&A_{49}-A_{69}&A_{4,10}\\
A_{58}&A_{59}&A_{5,10}\end{pmatrix},
 m_{2}+n_{8}+r\begin{pmatrix}A_{38}&A_{3,10}\\A_{48}&A_{4,10}\\A_{58}&A_{5,10}\end{pmatrix},\right.
\end{align*}
\begin{align*}
\left.  m_{3}+n_{7}+r\begin{pmatrix}A_{29}&A_{2,10}\\A_{49}-A_{69}&A_{4,10}\\A_{59}&A_{5,10}\end{pmatrix},
n_{7}+n_{8}+r\begin{pmatrix}A_{2,10}\\A_{3,10}\\A_{4,10}\\A_{5,10}\end{pmatrix}
\right\},
\end{align*}
\begin{align*}
\mathop {\min }\limits_{ X_{28},X_{39} } r \left({\Phi_{3}}\right)
  =&r\begin{pmatrix}A_{48}&A_{49}-A_{69}&A_{4,10}\\
A_{58}&A_{59}&A_{5,10}\end{pmatrix}+r\begin{pmatrix}A_{2,10}\\A_{3,10}\\A_{4,10}\\A_{5,10}\end{pmatrix}\\&+\max\left\{ r\begin{pmatrix}A_{38}&A_{3,10}\\A_{48}&A_{4,10}\\A_{58}&A_{5,10}\end{pmatrix}
  -r\begin{pmatrix}A_{48}&A_{4,10}\\A_{58}&A_{5,10}\end{pmatrix}
  -r\begin{pmatrix}A_{3,10}\\A_{4,10}\\A_{5,10}\end{pmatrix},\right.
\end{align*}
\begin{align*}
\left.   r\begin{pmatrix}A_{29}&A_{2,10}\\A_{49}-A_{69}&A_{4,10}\\A_{59}&A_{5,10}\end{pmatrix}-r\begin{pmatrix}A_{49}-A_{69}&A_{4,10}\\A_{59}&A_{5,10}\end{pmatrix}-
r\begin{pmatrix}A_{2,10}\\A_{4,10}\\A_{5,10}\end{pmatrix}
\right\}.
\end{align*}
Hence,
\begin{align*}
\mathop {\max }\limits_{BX+WE+CYF+DZG=A } r \left({X }\right)
 =\min\left\{ p_{1},n,t_{1},t_{2},t_{3},t_{4},t_{5},t_{6}\right\},
\end{align*}
\begin{align*}
\mathop {\min }\limits_{BX+WE+CYF+DZG=A } r \left({X }\right)
 =\max\left\{ t_{7},t_{8}\right\},
\end{align*}
where
\begin{align*}
t_{1}=p_{1}-r_{b}+r_{e}+m_{1}+m_{2}+m_{3}+m_{4}+r\begin{pmatrix}A_{56},&A_{57},&A_{58},&A_{59},&A_{5,10}\end{pmatrix},
\end{align*}
\begin{align*}
t_{2}=p_{1}-r_{b}+r_{e}+n_{4}+n_{6}+n_{7}+n_{8}+r\begin{pmatrix}A_{1,10}\\A_{2,10}\\A_{3,10}\\A_{4,10}\\A_{5,10}\end{pmatrix},
\end{align*}
\begin{align*}
t_{3}=p_{1}-r_{b}+r_{e}+n_{4}+n_{6}+m_{1}+m_{2}+m_{3}+r\begin{pmatrix}A_{48}&A_{49}-A_{69}&A_{4,10}\\
A_{58}&A_{59}&A_{5,10}\end{pmatrix},
\end{align*}
\begin{align*}
t_{4}=p_{1}-r_{b}+r_{e}+n_{4}+n_{6}+m_{1}+m_{2}+n_{8}+r\begin{pmatrix}A_{38}&A_{3,10}\\A_{48}&A_{4,10}\\A_{58}&A_{5,10}\end{pmatrix},
\end{align*}
\begin{align*}
t_{5}=p_{1}-r_{b}+r_{e}+n_{4}+n_{6}+m_{1}+m_{3}+n_{7}+r\begin{pmatrix}A_{29}&A_{2,10}\\A_{49}-A_{69}&A_{4,10}\\A_{59}&A_{5,10}\end{pmatrix},
\end{align*}
\begin{align*}
t_{6}=p_{1}-r_{b}+r_{e}+n_{4}+n_{6}+m_{1}+n_{7}+n_{8}+r\begin{pmatrix}A_{2,10}\\A_{3,10}\\A_{4,10}\\A_{5,10}\end{pmatrix}\geq t_{2},
\end{align*}
\begin{align*}
t_{7}=&r\begin{pmatrix}A_{1,10}\\A_{2,10}\\A_{3,10}\\A_{4,10}\\A_{5,10}\end{pmatrix}+r\begin{pmatrix}A_{56},&A_{57},&A_{58},&A_{59},&A_{5,10}\end{pmatrix}
+r\begin{pmatrix}A_{48}&A_{49}-A_{69}&A_{4,10}\\
A_{58}&A_{59}&A_{5,10}\end{pmatrix}
 \\&-r(A_{58},~A_{59},~A_{5,10})+r\begin{pmatrix}A_{38}&A_{3,10}\\A_{48}&A_{4,10}\\A_{58}&A_{5,10}\end{pmatrix}
  -r\begin{pmatrix}A_{48}&A_{4,10}\\A_{58}&A_{5,10}\end{pmatrix}
  -r\begin{pmatrix}A_{3,10}\\A_{4,10}\\A_{5,10}\end{pmatrix},
\end{align*}
\begin{align*}
t_{8}=&r\begin{pmatrix}A_{1,10}\\A_{2,10}\\A_{3,10}\\A_{4,10}\\A_{5,10}\end{pmatrix}+r\begin{pmatrix}A_{56},&A_{57},&A_{58},&A_{59},&A_{5,10}\end{pmatrix}
+r\begin{pmatrix}A_{48}&A_{49}-A_{69}&A_{4,10}\\
A_{58}&A_{59}&A_{5,10}\end{pmatrix}\\&-r(A_{58},~A_{59},~A_{5,10})+r\begin{pmatrix}A_{29}&A_{2,10}\\A_{49}-A_{69}&A_{4,10}\\A_{59}&A_{5,10}\end{pmatrix}-r\begin{pmatrix}A_{49}-A_{69}&A_{4,10}\\A_{59}&A_{5,10}\end{pmatrix}-
r\begin{pmatrix}A_{2,10}\\A_{4,10}\\A_{5,10}\end{pmatrix}.
\end{align*}
Now we pay attention to the ranks of the block matrices in $t_{i}$. Upon construction and computation, we obtain
\begin{align}\label{equh366}
r\begin{pmatrix}A_{56},&A_{57},&A_{58},&A_{59},&A_{5,10}\end{pmatrix}=&r\begin{pmatrix}S_{A}&S_{C}&S_{D}\\S_{E}&0&0\end{pmatrix}-r(S_{C},~S_{D})-r(S_{E})
\nonumber\\=&r_{acd|e00}-r_{cd}-r_{e},
\end{align}
\begin{align}
r\begin{pmatrix}A_{1,10}\\A_{2,10}\\A_{3,10}\\A_{4,10}\\A_{5,10}\end{pmatrix}=r\begin{pmatrix}S_{A}\\S_{E}\\S_{F}\\S_{G}\end{pmatrix}
-r\begin{pmatrix}S_{E}\\S_{F}\\S_{G}\end{pmatrix}
=r_{a|e|f|g}-r_{e|f|g},
\end{align}
\begin{align}
r\begin{pmatrix}A_{48}&A_{49}-A_{69}&A_{4,10}\\
A_{58}&A_{59}&A_{5,10}\end{pmatrix}=
&r\begin{pmatrix}S_{B}&0&S_{A}&S_{D}&0\\S_{B}&S_{A}&0&0&S_{C}\\0&S_{G}&0&0&0\\0&0&S_{F}&0&0\\0&S_{E}&0&0&0\\0&0&S_{E}&0&0\end{pmatrix}
-r(S_{C})-r(S_{D})\nonumber\\&-r\begin{pmatrix}S_{E}\\S_{F}\end{pmatrix}-r\begin{pmatrix}S_{E}\\S_{G}\end{pmatrix}-m_{2}-m_{3}-m_{4}-m_{5}
\nonumber\\
=&r_{b0ad0|ba00c|0g000|00f00|0e000|00e00}\nonumber\\&-r_{c}-r_{d}-r_{e|f}-r_{e|g}-m_{2}-m_{3}-m_{4}-m_{5},
\end{align}
\begin{align}
r\begin{pmatrix}A_{38}&A_{3,10}\\A_{48}&A_{4,10}\\A_{58}&A_{5,10}\end{pmatrix}=r\begin{pmatrix}S_{A}&S_{C}\\S_{E}&0\\S_{G}&0\end{pmatrix}
-r\begin{pmatrix}S_{E}\\S_{G}\end{pmatrix}-r(S_{C})=r_{ac|e0|g0}-r_{e|g}-r_{c},
\end{align}
\begin{align}
r\begin{pmatrix}A_{29}&A_{2,10}\\A_{49}-A_{69}&A_{4,10}\\A_{59}&A_{5,10}\end{pmatrix}=r\begin{pmatrix}S_{A}&S_{D}\\S_{E}&0\\S_{F}&0\end{pmatrix}
-r(S_{D})-r\begin{pmatrix}S_{E}\\S_{F}\end{pmatrix}=r_{ad|e0|f0}-r_{d}-r_{e|f},
\end{align}
\begin{align}
r\begin{pmatrix}A_{3,10}\\A_{4,10}\\A_{5,10}\end{pmatrix}=&r\begin{pmatrix}S_{A}&S_{D}&0&S_{C}\\S_{A}&0&S_{C}&0\\S_{E}&0&0&0\\S_{F}&0&0&0\\S_{G}&0&0&0\end{pmatrix}
-r(S_{C},~S_{D})-r(S_{C})-r\begin{pmatrix}S_{E}\\S_{F}\\S_{G}\end{pmatrix}\nonumber\\
=&r_{ad0c|a0c0|e000|f000|g000}-r_{cd}-r_{c}-r_{e|f|g},
\end{align}
\begin{align}
r\begin{pmatrix}A_{2,10}\\A_{4,10}\\A_{5,10}\end{pmatrix}
=&r\begin{pmatrix}S_{A}&S_{D}&0&0\\S_{A}&0&S_{C}&S_{D}\\S_{E}&0&0&0\\S_{F}&0&0&0\\S_{G}&0&0&0\end{pmatrix}-r(S_{C},~S_{D})-r(S_{D})
-r\begin{pmatrix}S_{E}\\S_{F}\\S_{G}\end{pmatrix}\nonumber\\
=&r_{ad00|a0cd|e000|f000|g000}-r_{cd}-r_{d}-r_{e|f|g},
\end{align}
\begin{align}
r(A_{58},~A_{59},~A_{5,10})=
&r\begin{pmatrix}S_{B}&0&S_{A}&S_{D}&0&0\\S_{B}&S_{A}&0&0&S_{C}&S_{D}\\0&S_{G}&0&0&0&0\\0&0&S_{F}&0&0&0\\0&S_{E}&0&0&0&0\\0&0&S_{E}&0&0&0\end{pmatrix}
\nonumber\\&-r(S_{B},~S_{D})-r(S_{C},~S_{D})-r\begin{pmatrix}S_{E}\\S_{F}\end{pmatrix}-r\begin{pmatrix}S_{E}\\S_{G}\end{pmatrix}\nonumber\\=
&r_{b0ad00|ba00cd|0g0000|00f000|0e0000|00e000}\nonumber\\&-r_{bd}-r_{cd}-r_{e|f}-r_{e|g},
\end{align}
\begin{align}
r\begin{pmatrix} A_{49}-A_{69}&A_{4,10}\\
 A_{59}&A_{5,10}\end{pmatrix}
 =&r\begin{pmatrix}S_{B}&0&S_{A}&S_{D}&0\\S_{B}&S_{A}&0&0&S_{C}\\0&S_{G}&0&0&0\\0&0&S_{F}&0&0\\0&S_{E}&0&0&0\\0&0&S_{E}&0&0\\0&S_{F}&0&0&0\end{pmatrix}
-r(S_{C})-r(S_{D})-r\begin{pmatrix}S_{E}\\S_{F}\\S_{G}\end{pmatrix}-r\begin{pmatrix}S_{E}\\S_{F}\end{pmatrix} \nonumber\\&-m_{2}-m_{3}-m_{4}-m_{5} \nonumber\\
 =&r_{b0ad0|ba00c|0g000|00f00|0e000|00e00|0f000}\nonumber\\&-r_{c}-r_{d}-r_{e|f|g}-r_{e|f}-m_{2}-m_{3}-m_{4}-m_{5},
\end{align}
\begin{align}\label{equh373}
r\begin{pmatrix} A_{48}&A_{4,10}\\
 A_{58}&A_{5,10}\end{pmatrix}
 =&r\begin{pmatrix}S_{B}&0&S_{A}&S_{D}&0\\S_{B}&S_{A}&0&0&S_{C}\\0&S_{G}&0&0&0\\0&0&S_{F}&0&0\\0&0&S_{G}&0&0\\0&S_{E}&0&0&0\\0&0&S_{E}&0&0\end{pmatrix}
-r(S_{C})-r(S_{D})-r\begin{pmatrix}S_{E}\\S_{F}\\S_{G}\end{pmatrix}-r\begin{pmatrix}S_{E}\\S_{G}\end{pmatrix}\nonumber\\&-m_{3}-m_{4}-m_{5}\nonumber\\
 =&r_{b0ad0|ba00c|0g000|00f00|00g00|0e000|00e00}\nonumber\\&-r_{c}-r_{d}-r_{e|f|g}-r_{e|g}-m_{2}-m_{3}-m_{4}-m_{5}.
\end{align}
Hence from  (\ref{equh025})-(\ref{equh0210}) and (\ref{equh366})-(\ref{equh373}), we deduce that
\begin{align*}
t_{1}=p_{1}-r_{b}+r_{e}+m_{1}+m_{2}+m_{3}+m_{4}+r_{acd|e00}-r_{cd}-r_{e}=p_{1}+r_{acd|e00}-r_{bcd},
\end{align*}
\begin{align*}
t_{2}=p_{1}-r_{b}+r_{e}+n_{4}+n_{6}+n_{7}+n_{8}+r_{a|e|f|g}-r_{e|f|g}=p_{1}+r_{a|e|f|g}-r_{b},
\end{align*}
\begin{align*}
t_{3}=p_{1}+r_{b0ad0|ba00c|0g000|00f00|0e000|00e00}-r_{e|f|g}-r_{bc}-r_{bd},
\end{align*}
\begin{align*}
t_{4}=p_{1}-r_{b}+r_{e}+n_{4}+n_{6}+m_{1}+m_{2}+n_{8}+r_{ac|e0|g0}-r_{e|g}-r_{c}=p_{1}+r_{ac|e0|g0}-r_{bc},
\end{align*}
\begin{align*}
t_{5}=p_{1}-r_{b}+r_{e}+n_{4}+n_{6}+m_{1}+m_{3}+n_{7}+r_{ad|e0|f0}-r_{d}-r_{e|f}=p_{1}+r_{ad|e0|f0}-r_{bd},
\end{align*}
\begin{align*}
t_{7}=&r_{a|e|f|g}+r_{acd|e00}-r_{e}-r_{b0ad00|ba00cd|0g0000|00f000|0e0000|00e000}
\\&+r_{bd}+r_{cd}+r_{b0ad0|ba00c|0g000|00f00|0e000|00e00}+r_{e|f|g}
\\&+r_{ac|e0|g0}-r_{b0ad0|ba00c|0g000|00f00|00g00|0e000|00e00}-r_{ad0c|a0c0|e000|f000|g000},
\end{align*}
\begin{align*}
t_{8}=& r_{a|e|f|g}+r_{acd|e00}-r_{e}-r_{b0ad00|ba00cd|0g0000|00f000|0e0000|00e000}
\\&+r_{bd}+r_{cd}+r_{b0ad0|ba00c|0g000|00f00|0e000|00e00}+r_{e|f|g} \\&+
r_{ad|e0|f0}-r_{b0ad0|ba00c|0g000|00f00|0e000|00e00|0f000}-r_{ad00|a0cd|e000|f000|g000}.
\end{align*}
\end{proof}
Similarly, we can get the corresponding results on $W,Y,$ and $Z$.

\begin{theorem}Let $A\in \mathbb{H}^{m\times n},
B\in \mathbb{H}^{m\times p_{1}},C\in \mathbb{H}^{m\times p_{2}},D\in \mathbb{H}^{m\times p_{3}},E\in \mathbb{H}^{q_{1}\times n},F\in \mathbb{H}^{q_{2}\times n}$  and $G\in \mathbb{H}^{q_{3}\times n}$ be given. Assume that  equation (\ref{system001}) is consistent. Then,
\begin{align*}
\mathop {\max }\limits_{BX+WE+CYF+DZG=A } r \left({W }\right)
 =&\min\big\{  q_{1},~m,~q_{1}+r_{ab|f0|g0}-r_{e|f|g},~q_{1}+r_{abcd}-r_{e},\\&
 q_{1}+r_{ee0000|0ad0b0|a00c0b|g00000|0f0000}-r_{bcd}-r_{e|f}-r_{e|g},\\&\qquad \qquad q_{1}+r_{abd|f00}-r_{e|f},~q_{1}+r_{abc|g00}-r_{e|g}\big\},
\end{align*}
\begin{align*}
&\mathop {\min }\limits_{ BX+WE+CYF+DZG=A } r \left({W }\right)\nonumber
\\=&r_{abcd}+r_{ab|f0|g0}-r_{b}-r_{ee0000|0ad0b0|a00c0b|g00000|0f0000|0g0000}+r_{e|g}+r_{f|g}\\&+r_{ee0000|0ad0b0|a00c0b|g00000|0f0000}+r_{bcd}\\&+
\min\big\{r_{abd|f00}-r_{ee00000|0ad00b0|a00cd0b|g000000|0f00000}-r_{aadcb|g0000|0f000|f0000},\\&\qquad \qquad
r_{abc|g00}-r_{ee00000|0ad0b0c|a00c0b0|g000000|0f00000}-r_{aabcd|g0000|0f000|0g000} \big\}.
\end{align*}
\end{theorem}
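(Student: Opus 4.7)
The plan is to mirror the argument just carried out for $r(X)$, applied now to the expression for $W$ given in Theorem \ref{theorem05}. Since $W = P\widehat{W}V_{1}^{-1}$ with $P$ and $V_{1}$ invertible, we have $r(W)=r(\widehat{W})$, so the problem reduces to computing the extremal ranks of the block matrix $\widehat{W}$ displayed in (\ref{equh340})--(\ref{equh343}), where some blocks are fixed by the decomposition data $A_{ij}$ and many others are free parameters in $\mathbb{H}$. The strategy is to eliminate the free parameters in stages, outermost first, using Lemmas \ref{lemma01}, \ref{lemma03}, and \ref{lemma04}.

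First I would isolate the ``outer'' free block column $(W_{16},W_{26},\ldots,W_{10,6})^{T}$ and the outer free block row $(A_{10,1},\ldots,A_{10,5},W_{10,6})$ (of which only $W_{10,6}$ is free) and apply Lemma \ref{lemma01} to reduce the rank computation of $\widehat{W}$ to the rank computation of the $9\times 5$ inner block $\Psi_{1}$ obtained by deleting the last row and column, possibly plus a term coming from the last block row $(A_{10,1},\ldots,A_{10,5})$. Next I would peel off the free $2\times k$ strip consisting of blocks like $(W_{81},W_{82})$, $(W_{91},W_{93})$, or the analogous free rows/columns in the interior, by repeated application of Lemma \ref{lemma03}, each application reducing the problem to a smaller residual block matrix $\Psi_{i}$ and introducing a correction term of the form $r(\cdot)+r(\cdot)-r(\cdot)$. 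Finally, the two remaining isolated scalar free blocks (analogous to the $X_{28},X_{39}$ pair in the $X$-argument) would be handled by Lemma \ref{lemma04}, producing the characteristic max/min formulas with four/two candidate block ranks.

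At this stage one obtains expressions for $\max r(\widehat{W})$ and $\min r(\widehat{W})$ in terms of the block dimensions $m_{i},n_{i}$ and the ranks of certain residual block matrices whose entries are $A_{ij}$. To convert these to the closed form stated in the theorem, I would translate each residual block $(A_{ij})$ into a block matrix in the factors $S_{A},S_{B},S_{C},S_{D},S_{E},S_{F},S_{G}$ of the simultaneous decomposition (\ref{equ021}), exactly as was done in the $X$-case in (\ref{equh514})--(\ref{equh523}). Each such translation expresses a rank of an $A$-submatrix as the rank of a partitioned array built from $B,C,D,E,F,G$ minus corrections of the form $r_{b},r_{c},r_{cd}$, etc., and minus sums of $m_{i}$'s or $n_{i}$'s. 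Substituting these translations and the formulas (\ref{equh025})--(\ref{equh0210}) for $m_{i},n_{i}$ will telescope the large algebraic expressions down to the six terms in the $\min$ for $\max r(W)$ and the two alternatives in the $\max$ for $\min r(W)$.

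The main obstacle will not be any single lemma application but the bookkeeping: the order in which one peels off the free blocks in $\widehat{W}$ must be chosen so that the residual matrices that appear are precisely the ones whose ranks match the target partitioned-matrix symbols $r_{ee0000|0ad0b0|a00c0b|g00000|0f0000}$, $r_{abd|f00}$, etc. A poor ordering leads to residuals that cannot be simplified to the claimed closed form. I would therefore choose the elimination order so that, at each stage, the residual block matrix retains the symmetry between the $(B,C,D)$-side rows and the $(E,F,G)$-side columns analogous to the $\Theta,\Theta_{1},\Theta_{2}$ sequence used for $Z$ and the $\Phi_{1},\Phi_{2},\Phi_{3}$ sequence used for $X$, so that the final translation step via the $S$-factors goes through cleanly. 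Apart from this organizational issue, the verification that each intermediate formula reduces to the stated one is a direct (if lengthy) computation of the same kind carried out in the proof for $X$, and is therefore omitted here.
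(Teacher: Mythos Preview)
Your proposal is correct and follows essentially the same approach as the paper, which in fact omits the proof of this theorem with the remark ``Similarly, we can get the corresponding results on $W,Y,$ and $Z$,'' implicitly invoking exactly the peeling-off procedure via Lemmas~\ref{lemma04}, \ref{lemma01}, \ref{lemma03} followed by translation to block ranks in $S_{A},\ldots,S_{G}$ that you outline. One organizational observation that would streamline your bookkeeping: the structure of $\widehat{W}$ is the transpose analogue of $\widehat{X}$ (with the roles of the $(B,C,D)$-side and $(E,F,G)$-side interchanged), so the elimination order you seek is precisely the transposed version of the $\Phi_{1},\Phi_{2},\Phi_{3}$ sequence already worked out for $X$.
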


\begin{theorem}Let $A\in \mathbb{H}^{m\times n},
B\in \mathbb{H}^{m\times p_{1}},C\in \mathbb{H}^{m\times p_{2}},D\in \mathbb{H}^{m\times p_{3}},E\in \mathbb{H}^{q_{1}\times n},F\in \mathbb{H}^{q_{2}\times n}$  and $G\in \mathbb{H}^{q_{3}\times n}$ be given. Assume that  equation (\ref{system001}) is consistent.
Then,
\begin{align*}
&\mathop {\max }\limits_{BX+WE+CYF+DZG=A } r \left({Y }\right)
=\\&\min\big\{  p_{2},~q_{2},~p_{2}+q_{2}+r_{ab|e0|g0}-r_{e|f|g}-r_{bc},~p_{2}+q_{2}+r_{abd|e00}-r_{e|f}-r_{bcd}\big\},\\
&
\mathop {\min }\limits_{ BX+WE+CYF+DZG=A } r \left({Y }\right)
=r_{ab|e0|g0}+r_{abd|e00}-r_{abd|e00|g00}-r_{b}-r_{e},
\\&\mathop {\max }\limits_{BX+WE+CYF+DZG=A } r \left({Z }\right)
=\\&\min\big\{  p_{3},~q_{3},~p_{3}+q_{3}+r_{ab|e0|f0}-r_{e|f|g}-r_{bd},~p_{3}+q_{3}+r_{abc|e00}-r_{e|g}-r_{bcd}\big\},
\\&
\mathop {\min }\limits_{ BX+WE+CYF+DZG=A } r \left({Z }\right)
=r_{ab|e0|f0}+r_{abc|e00}-r_{abc|e00|f00}-r_{b}-r_{e}.
\end{align*}
\end{theorem}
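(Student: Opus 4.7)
The plan is to mirror the strategy used for $r(X)$ in the preceding theorem, applied to the explicit forms of $\widehat{Y}$ and $\widehat{Z}$ furnished by Theorem \ref{theorem05}. Since $T_2,V_2,T_3,V_3$ are invertible, we have $r(Y)=r(\widehat{Y})$ and $r(Z)=r(\widehat{Z})$, so the entire problem reduces to computing the extremal ranks of $\widehat{Y}$ and $\widehat{Z}$ as the free entries vary over $\mathbb{H}$.

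For $\widehat{Y}$, I would first note which of its entries are genuinely free. Reading off (\ref{equh340})--(\ref{equh343}), the border $Y_{16},\ldots,Y_{56}$ and $Y_{61},\ldots,Y_{66}$ are free, as are the interior entries $Y_{22},Y_{24},Y_{42},Y_{44}$; every other entry of $\widehat{Y}$ is forced by the equations and may still be shifted through the free variables $X_{ij},W_{ij},Z_{ij}$ sitting in $\widehat{X},\widehat{W},\widehat{Z}$. The strategy is to apply Lemma \ref{lemma01} first to the block border of $\widehat{Y}$ (this reduces $r(\widehat{Y})$ to the extremal ranks of the central $5\times 5$ block plus side terms in $p_2,q_2$ and in ranks of the remaining slices), then apply Lemma \ref{lemma03} to strip off the pair of free columns containing $Y_{42},Y_{22}$, and finally apply Lemma \ref{lemma04} (or Lemma \ref{lemma03} again) to the two remaining independent scalar-like free blocks $Y_{24},Y_{44}$. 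The same scheme applied to $\widehat{Z}$ gives the formulas for $r(Z)$: the outer border is handled by Lemma \ref{lemma01}, and the interior free entries $Z_{12},Z_{13},Z_{21},\ldots$ are removed in successive applications of Lemmas \ref{lemma03}--\ref{lemma04}.

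After these peeling steps the extremal ranks are expressed in terms of $m_i$, $n_i$, and ranks of small block matrices built from the $S_A,S_B,S_C,S_D,S_E,S_F,S_G$ of Theorem \ref{theorem01}. The conversion from these ``internal'' ranks to the explicit rank expressions $r_{ab|e0|g0}$, $r_{abd|e00}$, $r_{ab|e0|f0}$, $r_{abc|e00}$, etc., is carried out in exactly the style of (\ref{equh514})--(\ref{equh523}): for each block matrix one forms an enlarged block equivalent to a direct sum of the relevant $S$-blocks surrounded by identity blocks, computes the rank both ways, and subtracts off the ``padding'' ranks ($r_e,r_f,r_g,r_b,r_c,r_d$ and the relevant $m_i,n_i$). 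The dimension identities (\ref{equh025})--(\ref{equh0210}) then collapse the combinations of $m_i$'s and $n_i$'s that appear into the compact rank expressions in the statement.

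The main obstacle, as in the $X$-case, is bookkeeping rather than novelty: one has to verify that the observed ``free'' entries of $\widehat{Y}$ and $\widehat{Z}$ are indeed unconstrained (the repeated entries across the three matrices $\widehat{X},\widehat{W},\widehat{Y},\widehat{Z}$ forced by the solvability conditions in Theorem \ref{theorem05} must cancel consistently) and that when Lemma \ref{lemma01}, \ref{lemma03}, and \ref{lemma04} are invoked in sequence the resulting max--min formulas admit the claimed simplification. I expect $Y$ and $Z$ to be considerably easier than $X$: because the $Y$-equation in (\ref{system001}) only involves the pair $(C,F)$ and the $Z$-equation only the pair $(D,G)$, the peeling produces only four optimands inside the $\min$ and a single bracket inside the $\min/\max$ expression, which is consistent with the compact formulas given. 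Once the rank calculations of the block matrices have been carried out by the block-equivalence device illustrated in (\ref{equh366})--(\ref{equh373}), the final formulas for $\max r(Y), \min r(Y), \max r(Z), \min r(Z)$ follow by algebraic cancellation.
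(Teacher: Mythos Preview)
Your proposal is correct and is exactly the approach the paper intends: the paper itself omits the argument with ``Similarly, we can get the corresponding results on $W,Y,$ and $Z$,'' so the peeling scheme via Lemmas \ref{lemma01}, \ref{lemma03}, \ref{lemma04} followed by the block-equivalence rank computations in the style of (\ref{equh366})--(\ref{equh373}) is precisely what is meant. One simplification you will discover when you carry it out: once the free $X_{ij},W_{ij}$ are absorbed, the only genuinely fixed entries of $\widehat{Y}$ form the ``cross'' $A_{86},A_{87},A_{88},A_{68},A_{78}$ (and analogously $A_{99},A_{96},A_{97},A_{69},A_{79}$ for $\widehat{Z}$), so a single application of Lemma \ref{lemma03} already yields the four-term $\min$ and the single-term minimum, without needing Lemma \ref{lemma04}.
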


\begin{remark}
All the results are true over octonion algebra. 
\end{remark}

\section{\textbf{Conclusion}}

We have established the  simultaneous decomposition of the general real quaternion  matrix array (\ref{array1}). We have derived all the
dimensions of identity matrices in the equivalence canonical form of the   matrix array (\ref{array1}). Using the
simultaneous decomposition of the general  matrix array (\ref{array1}), we have presented necessary and sufficient conditions for the existence and
the general solutions to the real matrix equations (\ref{system002}) and (\ref{system001}), respectively.  Moreover, we have given the
range of ranks  of the   general solutions to (\ref{system002}) and (\ref{system001}), respectively.

As a special case of the   matrix array (\ref{array1}), we have derived all the
dimensions of identity matrices in the   equivalence canonical form of triple matrices with the same row or
column numbers,  which perfect the results  in \cite{QWWangandyushaowen}. The presented expression of the general solution  is more useful   than the expression found in \cite{QWWangandyushaowen}, since the latter can not be used to consider the maximal and minimal ranks of the general solution to (\ref{system002}). On the other hand, Wang and He \cite{wanghe} gave
the range of ranks of $Y$ and $Z$ to (\ref{system001}), but the two present authors did not derive the range of ranks of $X$ and $W$. We in this paper have solved this problem.

\end{document}